\title{The Occupation Kernel Method for Nonlinear System Identification\thanks{This paper gives a considerable extension to a conference paper published in the proceedings of the 2019 IEEE Conference on Decision and Control \cite{SCC.Rosenfeld.Kamalapurkar.ea2019a}. This research was supported by the Air Force Office of Scientific Research (AFOSR) under contract numbers FA9550-20-1-0127, FA9550-16-1-0246, FA9550-18-1-0122, and FA9550-21-1-0134 the Office of Naval Research (ONR) under contract number N00014-18-1-2184, the Air Force Research Laboratories (AFRL) under contract number FA8651-19-2-0009 and the Defense Advanced Research Projects Agency (DARPA) under contract number FA8750-18-C-0089, and the National Science Foundation under award numbers 2027999, 2027976, and 2028001. Any opinions, findings and conclusions or recommendations expressed in this material are those of the author(s) and do not necessarily reflect the views of the sponsoring agencies. The authors would like to express our appreciation for Hongliang Wang for his careful reading of the manuscript and bringing to our attention numerous errors and typos. A part of this work was completed utilizing the High Performance Computing Center facilities of Oklahoma State University at Stillwater.} The following YouTube playlist discusses the contents of this manuscript: \url{https://youtube.com/playlist?list=PLldiDnQu2phtKtASlUrmiPBH705xmRdUC}}
\author{Joel A. Rosenfeld%
\thanks{Department of Mathematics and Statistics, University of South Florida, Tampa, FL 33620 USA}%
\and Benjamin Russo%
\thanks{Department of Mathematics, Farmingdale State College, Farmingdale, NY 11735 USA}%
\and Rushikesh Kamalapurkar%
\thanks{Department of Mechanical and Aerospace Engineering, Oklahoma State University, Stillwater, OK 74078 USA}%
\and Taylor T. Johnson\thanks{Institute for Software Integrated Systems, Department of Electrical Engineering and Computer Science, Vanderbilt University, Nashville, TN 37212 USA}}
\useunder{\uline}{\ul}{}
\useunder{\uline}{\ul}{}
\newcounter{assumptioncounter}
\newcounter{systemcounter}
\newcounter{expcounter}
\newtheorem{example}{Example}
\newtheorem{remark}{Remark}
\newtheorem{assumption}[assumptioncounter]{Assumption}
\newtheorem{system}[systemcounter]{System}
\newtheorem{experiment}[expcounter]{Experiment}
\DeclareMathOperator{\vspan}{span}
\DeclareMathOperator{\diag}{diag}
\begin{document}

\maketitle

\begin{abstract}
This manuscript presents a novel approach to nonlinear system identification leveraging densely defined Liouville operators and a new ``kernel'' function that represents an integration functional over a reproducing kernel Hilbert space (RKHS) dubbed an occupation kernel. The manuscript thoroughly explores the concept of occupation kernels in the contexts of RKHSs of continuous functions, and establishes Liouville operators over RKHS where several dense domains are found for specific examples of this unbounded operator. The combination of these two concepts allow for the embedding of a dynamical system into a RKHS, where function theoretic tools may be leveraged for the examination of such systems. This framework allows for trajectories of a nonlinear dynamical system to be treated as a fundamental unit of data for nonlinear system identification routine. The approach to nonlinear system identification is demonstrated to identify parameters of a dynamical system accurately, while also exhibiting a certain robustness to noise.
\end{abstract}

\section{Introduction}

Consider a dynamical system $\dot x = f(x)$, where $x :[0,T] \to \mathbb{R}^n$ is the system state and $f :\mathbb{R}^n \to \mathbb{R}^n$ are Lipschitz continuous dynamics. Dynamical systems are prevalent in the sciences, such as engineering \cite{kamalapurkar2018reinforcement,dixon2013nonlinear,khalil2002nonlinear}, biology \cite{Bartlett1973,freedman1980deterministic}, neuroscience \cite{izhikevich2008dynamical}, physics \cite{stewart2000non}, and mathematics \cite{coddington1955theory,perko2013differential}. However, in many cases even physically motivated dynamical systems can have unknown parameters (i.e. a gray box model), such as mass and length of mechanical components, or the dynamics may be completely unknown (i.e. a black box model) \cite{nelles2013nonlinear}. In such cases, system identification methods are leveraged to estimate the dynamics of the system based on data generated by the system \cite{nelles2013nonlinear}.

While many classical tools are available for system identification for linear dynamics using the impulse response and Fourier and Laplace transforms of the dynamical system, the identification of nonlinear systems proves more challenging as nonlinearities may manifest in a variety of ways, and linear transform methods for general nonlinear systems are unavailable \cite{billings2013nonlinear,nelles2013nonlinear}.

To address these challenges a variety of nonlinear system identification methods have been developed, such as NARMAX methods \cite{billings2013nonlinear}, Volterra series \cite{glentis1999efficient}, Lyapunov methods \cite{parikh2018integral}, and neural networks \cite{nelles2013nonlinear}. Recent developments in nonlinear system identification include reasoning over infinite-dimensional function spaces via kernel methods \cite{SCC.Pillonetto.Dinuzzo.ea2014,SCC.Carli.Chen.ea2016,SCC.Birpoutsoukis.Marconato.ea2017,SCC.Schoukens.Ljung2019} and dynamic mode decompositions (DMDs) and their connection with the Koopman operator \cite{brunton2016discovering,hemati2014dynamic,taylor2018dynamic,SCC.Mauroy.Goncalves2020}. However, given the rich variety of nonlinear systems, there is no modal approach to resolving the system identification problem for nonlinear systems \cite{nelles2013nonlinear}.

One technical challenge that arises in many of the system identification methods described above is the estimation of the state derivative \cite{brunton2016discovering,parikh2018integral}. Frequently only the output trajectory is available and numerical estimation methods are employed to obtain samples of the state and the state derivative. Unfortunately, numerical state derivative estimates are prone to error, and introduce an artificial noise component that requires additional filtering \cite{brunton2016discovering}.

In an online parameter estimation context, \cite{parikh2018integral} leveraged the technique of integral concurrent learning, where state derivative estimates were replaced with integrals of the state. Therein it was demonstrated that the parameters were more precisely estimated via the integral concurrent learning method than by methods using state derivative estimates. Moreover, in the online setting the parameter estimation error was more robust to noise under the integral concurrent learning method \cite{parikh2018integral}.

The present manuscript develops a method to compute projection of the dynamics of a system onto the span of a collection of basis functions using samples of the system trajectory. The projection is derived for a class of dynamical systems that arise as symbols for densely defined Liouville operators over a reproducing kernel Hilbert space (RKHS). The developed method also provides a collection of constraints that can be leveraged for sparse identification routines, such as the SINDy algorithm \cite{brunton2016discovering}.

Specifically, the method presented in Section \ref{sec:systemid} leverages novel kernel techniques presented in Section \ref{sec:occupation}, where the concept of occupation kernels is introduced along side that of densely defined Liouville operators. Occupation kernels are a generalization of occupation measures, which have been used in dynamical systems theory and optimal control based largely on the seminal work of \cite{lasserre2008nonlinear}. The present manuscript links the theory of occupation measures to function theory by examining integration functionals over RKHSs rather than the Banach spaces of continuous functions.

While an occupation measure is a member of the dual of a Banach space, an occupation kernel is a function that resides in the RKHS. Moreover, the representation of a trajectory as an occupation kernel over a RKHS changes with the selection of RKHS, which allows for different aspects of the trajectory to be emphasized. In particular, an inner product on the space of dynamical systems may be determined given observed system trajectories through a combination of densely defined operators over a RKHS and the occupation kernel corresponding to the trajectory. In contrast, due to the limited availability of computational tools for measures, the study of occupation measures has been limited to polynomials in both the dynamics of the dynamical systems as well as the test functions leveraged to provide constraints on the occupation measures themselves.

The contributions of this manuscript are presented below.
\begin{itemize}
    \item The concept of Liouville operators is integrated with the theory of RKHSs to yield a representation of nonlinear dynamical systems in a Hilbert space setting.
    \item Occupation measures are generalized to occupation kernels, where a trajectory is represented inside a Hilbert space as a function.
    \item An inner product on dynamical systems that give rise to densely defined Liouville operators over RKHSs is developed in Section \ref{sec:testfree} via adjoints applied to occupation kernels. 
    \item Through a factorization of the Gram matrix for projection within the inner product space, a collection of constraints is developed for a system identification method, which is presented in Section \ref{sec:systemid}. These constraints use more general test functions than polynomials, which is an advantage that arises in the use of occupation kernels over occupation measures. In particular, the test functions under consideration correspond to feature maps for the underlying kernel function.
\end{itemize}

The manuscript is organized as follows. Preliminaries necessary for the development of occupation kernels and densely defined Liouville operators are presented in Section \ref{sec:preliminaries}, and the densely defined Liouville operators and occupation kernels themselves are introduced in Section \ref{sec:occupation}. These tools are then turned towards the development of an inner product on nonlinear dynamical systems, where the parameters resolving a system identification routine arise as the coefficients for the projection of a dynamical system onto its basis functions. Specifically, through a factorization of the Gram matrix,  a collection of linear constraints on the parameters of the dynamics in Section \ref{sec:systemid} arise naturally. These constraints may be compared with \cite{brunton2016discovering}, where state derivatives are replaced via a collection of integral constraints. Section \ref{sec:occupation-estimation} examines the convergence properties of occupation kernels associated with various numerical methods, while Section \ref{sec:noise} demonstrates a robustness to noise of the samples used in Section \ref{sec:systemid}. Section \ref{sec:streaming} examines a method for incorporating streaming data for system identification. Finally, the system identification approach is then examined through a collection of numerical experiments in Section \ref{sec:numerical} and the experiments are discussed in Section \ref{sec:discussion}.

{\color{black}\section{Problem formulation}\label{sec:systemid}

In a gray box system identification setting, the system dynamics, $f: \mathbb{R}^n \to \mathbb{R}^n$, is parameterized in terms of a collection of basis functions, $Y_i :\mathbb{R}^n \to \mathbb{R}^n$ for $i = 1, \ldots, M$, as
\begin{equation}\label{eq:parameterization}
\dot{x} = f(x) = \sum_{i=1}^M \theta_i Y_i(x).
\end{equation}
Let $X \subset \mathbb{R}^n$ be compact, $H$ be a RKHS of continuous functions over $X$, and $0 < T \in \mathbb{R}$. The goal of system identification is to determine values of the parameters, $\theta_i$ for $i=1,\ldots,M$, such that \eqref{eq:parameterization} may be used to reproduce a given a collection of continuous trajectories, $\{ \gamma_j:[0,T]\to X \}_{j=1}^{N}$.

The identification method needs to be data-efficient and robust to process and sensor noise. A noise-robust method to estimate the parameters can be derived by observing that if the trajectory $ \gamma_j $ is a solution to \eqref{eq:parameterization}, then $ \gamma_j(T) - \gamma_j(0) = \int_0^T \sum_{i=1}^M \theta_i Y_i(\gamma_j(t))dt $. A solution of the system identification problem may then be obtained by solving the integral least squares (ILS) problem
\begin{equation}
    \min_{\theta_1,\ldots,\theta_M} \sum_{j=1}^N \left\Vert\gamma_j(T) - \gamma_j(0) - \sum_{i=1}^M \theta_i \int_0^T Y_i(\gamma_j(t))dt\right\Vert_2^2.\label{eq:ILS}
\end{equation}

In this paper, the ILS problem in \eqref{eq:ILS} is generalized by posing the system identification problem as a projection problem in a RKHS. In particular, the ILS formulation is recovered when the underlying RKHS is the one corresponding to the linear kernel $ K(x,y) = x^T y $. The generalization results in several practical and theoretical advantages. By using different kernels, more information can be extracted from the same set of trajectories than the ILS formulation. For example, if only a single trajectory of a dynamical system with $ n > 1 $ is available, the regression matrix corresponding to the ILS problem is rank deficient. However, as shown in Section \ref{sec:numerical}, Experiment \ref{exp:three}, the occupation kernel method results in a well-posed least squares problem. {\color{black}As evidenced by the results of Experiment \ref{exp:five} (see Figure \ref{fig:MonteCarloILSComparison}), the method developed in this paper can yield better results than the solution of the ILS problem, even in the case where there are enough trajectories available to yield a well-conditioned regression matrix for the ILS problem.}

The generalization is realized by developing the theory of occupation kernels and Liouville operators and using it to formulating a novel inner product between dynamics that are symbols of densely defined Liouville operators. In addition to the system identification algorithm, the inner product provides a new measure of distance between two dynamical systems. In the following, fundamental properties of RKHSs are summarized before introducing the idea of occupation kernels.}

\section{Preliminaries}
\label{sec:preliminaries}
\subsection{Reproducing Kernel Hilbert Spaces}
{\color{black}
\begin{definition}A RKHS, $H$, over a set $X$ is a Hilbert space of real valued functions over the set $X$ such that for all $x \in X$ the evaluation functional, $E_x: H \to \mathbb{R}$, given as $E_x g := g(x)$ is bounded.\end{definition}}

The Riesz representation theorem guarantees, for all $x \in X$, the existence of a function $k_x \in H$ such that $\langle g, k_x \rangle_H = g(x)$, where $\langle \cdot, \cdot \rangle_H$ is the inner product for $H$ \cite[Chapter 1]{paulsen2016introduction}. The function $k_x$ is called the reproducing kernel function at $x$, and the function $K(x,y) = \langle k_y, k_x \rangle_H$ is called the kernel function corresponding to $H$.
{\color{black}
Each kernel function has an associated feature map, $\Psi : X \to \ell^2(\mathbb{N})$, such that $K(x,y) = \langle \Psi(x),\Psi(y) \rangle_{\ell^2(\mathbb{N})}$. The feature map can be obtained by using an orthonormal basis for $H$, and if $K: X \times X \to \mathbb{R}$ can be represented through a feature map, then there is a unique RKHS for which $K$ is its kernel function \cite{bishop2006pattern}.
}
This manuscript utilizes two RKHSs, which are defined through their kernel functions. For $\mu > 0$, the kernel function $K_E(x,y) = e^{\mu x^Ty}$ is called the exponential dot product kernel function, and for $\mu > 0$, the kernel function given as $K_G(x,y) = \exp\left( -\frac{1}{\mu} \|x-y\|_2^2\right)$ is called a Gaussian radial basis function. Both $K_E$ and $K_G$ are kernel functions for RKHSs over $\mathbb{R}^n$ \cite[Chapter 4]{steinwart2008support}.

\subsection{Densely Defined Operators}

For many RKHSs of continuously differentiable functions, the differential operator is unbounded, which means that there are frequently functions in such a RKHS, $H$, such that their derivative is not a member of $H$. The focus of this manuscript is in the study of Liouville operators, which implement the gradient operation on members of a RKHS. As such, care will be required in defining these operators and their domain.

Given a Hilbert space, $H$, and a subspace $\mathcal{D}(T) \subset H$ a linear operator $T:\mathcal{D}(T) \to H$ is called densely defined if $\mathcal{D}(T)$ is a dense subspace of $H$ \cite[Chapter 5]{pedersen2012analysis}. The operator $T$ is closed if, for every sequence $\{ g_m \}_{m=0}^\infty \subset \mathcal{D}(T)$, such that $g_m \to g \in H$ and $T g_m \to h \in H$, then $g \in \mathcal{D}(T)$ and $Tg = h$.

The adjoint of a possibly unbounded operator is given first by its domain: $\mathcal{D}(T^*)$$ = \{ g \in H : h \mapsto \langle Th, g \rangle_H \text{ is bounded over } \mathcal{D}(T) \}$ \cite[Chapter 5]{pedersen2012analysis}. For each $g \in \mathcal{D}(T^*)$ there exists a member $T^*g \in H$ such that $\langle Th,g \rangle_H = \langle h, T^*g\rangle_H$. Thus, the operator $T^*$ may be defined as taking $g \in \mathcal{D}(T^*)$ to $T^*g$, which was obtained through the Riesz representation theorem. The closedness of the operator guarantees the nonemptiness of the domain of its adjoint. In fact, the following stronger statement holds.

\begin{lemma}(c.f. \cite[Chapter 5]{pedersen2012analysis})\label{lem:closedops} The adjoint of a closed operator is densely defined.\end{lemma}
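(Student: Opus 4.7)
The plan is to work in the Hilbert space $H \oplus H$ equipped with the inner product $\langle (x_1,y_1),(x_2,y_2) \rangle = \langle x_1,x_2 \rangle_H + \langle y_1,y_2 \rangle_H$, and to identify the graph of $T^\ast$ as the orthogonal complement of a rotated copy of the graph of $T$. Concretely, introduce the unitary involution $V : H \oplus H \to H \oplus H$ given by $V(x,y) = (-y,x)$. By simply unpacking the definition of the adjoint, I would show
\[
G(T^\ast) = \bigl( V\,G(T) \bigr)^\perp,
\]
since $(y,z) \perp V(x,Tx)$ for every $x \in \mathcal{D}(T)$ is exactly the assertion $\langle Tx,y \rangle_H = \langle x,z \rangle_H$, which says $y \in \mathcal{D}(T^\ast)$ with $T^\ast y = z$. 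This step is routine but carries the whole argument: it converts the density question for $\mathcal{D}(T^\ast)$ into an orthogonality question in $H \oplus H$.

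Next, I would argue by contradiction. Suppose $\mathcal{D}(T^\ast)$ is not dense. Then there exists a nonzero $z \in H$ with $\langle z,y \rangle_H = 0$ for all $y \in \mathcal{D}(T^\ast)$. Consider the element $(z,0) \in H \oplus H$. For any $(y,T^\ast y) \in G(T^\ast)$ we have $\langle (z,0),(y,T^\ast y) \rangle = \langle z,y \rangle_H = 0$, so $(z,0) \in G(T^\ast)^\perp$. Combining with the identity from the previous paragraph,
\[
(z,0) \in \bigl( V\,G(T) \bigr)^{\perp\perp}.
\]

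Here is where the closedness hypothesis enters: since $T$ is closed, $G(T)$ is a closed subspace of $H \oplus H$, and because $V$ is unitary, $V\,G(T)$ is also closed. Therefore $(V\,G(T))^{\perp\perp} = V\,G(T)$, so $(z,0) = V(x,Tx) = (-Tx,x)$ for some $x \in \mathcal{D}(T)$. Reading off coordinates gives $x = 0$ and $z = -Tx = 0$, contradicting $z \neq 0$. Hence $\mathcal{D}(T^\ast)$ must be dense.

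The main obstacle is really just the initial identification $G(T^\ast) = (V\,G(T))^\perp$; once this is in hand, the density assertion is a one-line consequence of $M^{\perp\perp} = M$ for closed subspaces, together with the observation that the only way $(z,0)$ can lie in the graph of the rotation $V$ applied to a function graph is for both $x$ and $z$ to vanish. No difficult estimates or approximation arguments are needed; the whole proof reduces to elementary Hilbert space geometry once the graph perspective is adopted.
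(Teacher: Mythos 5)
Your proof is correct. The paper does not supply its own argument for this lemma---it simply cites Pedersen, Chapter 5---and the von Neumann graph argument you give (identifying $G(T^\ast)$ with $(V\,G(T))^\perp$ for the unitary $V(x,y)=(-y,x)$, then using $M^{\perp\perp}=M$ for the closed subspace $V\,G(T)$) is precisely the standard proof found in that reference. The only point worth flagging is that the lemma implicitly assumes $T$ is densely defined (as all operators in the paper are), which you use tacitly when asserting that $z=T^\ast y$ is uniquely determined; otherwise your argument is complete.
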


\section{Liouville Operators and Occupation Kernels}\label{sec:occupation}

To establish a connection between RKHSs and nonlinear dynamical systems, the following operator is introduced, which is inspired by the study of occupation measures \cite{lasserre2008nonlinear}.

\begin{definition}Let $\dot x = f(x)$ be a dynamical system with the dynamics, $f: \mathbb{R}^n \to \mathbb{R}^n$, locally Lipschitz continuous, and suppose that $H$ is a RKHS over a set $X$, where $X \subset \mathbb{R}^n$ is compact. The \emph{Liouville operator with symbol $f$}, $A_f : \mathcal{D}(A_f) \to H$, is given as \[A_f g := \nabla_x g \cdot f,\] where \[\mathcal{D}(A_f):= \left\{ g \in H : \nabla_x g \cdot f \in H \right\}. \] \end{definition}

A small subset of Liouville operators can be obtained as infinitesimal generators for semigroups of Koopman operators. For a Liouville operator to be such an infinitesimal generator, the associated symbol (or continuous time dynamics) must be forward invariant (cf. \cite[Assumption 1]{giannakis2018reproducing}). Specifically, dynamics that lead to finite escape times, such as $\dot x = 1+x^2$, cannot be discretized to yield a Koopman operator, and as a result, the Liouville operator corresponding to the symbol $f(x) := 1+x^2$ cannot be obtained as an infinitesimal generator. More generally, Liouville operators corresponding to globally Lipschitz symbols can be obtained as infinitesimal generators. However, techniques that rely on global Lipschitz continuity would exclude most polynomial systems and other nonlinear systems with finite escape times. Hence, methods that allow generators to inherit desirable properties, such as being densely defined and closed, from their associated semigroups, cannot be leveraged to study a majority of Liouville operators. These properties must be established separately to ensure full generality.

Liouville operators embed the nonlinear dynamics inside of an unbounded operator. The first question to address is that of existence. In particular, are there reasonable classes of dynamics for which the Liouville operator is densely defined over a RKHS? 

\begin{example}
The most commonly investigated dynamical systems are those with polynomial dynamics. In the case that $f$ is a polynomial over $\mathbb{R}^n$, a Liouville operator with those dynamics maps polynomials to polynomials, when polynomials are contained in the RKHS in question. One example, where polynomials are not only contained in the RKHS but are also dense is the exponential dot product kernel's native RKHS \cite[Chapter 4]{steinwart2008support}. Moreover, for this space, the collection of monomials forms an orthogonal basis.
\end{example}

The above example guarantees the existence of densely defined Liouville operators for a large class of dynamics. Adjusting the RKHS will also adjust the Liouville operators that are admissible. In the case when a Liouville operator is not known to be densely defined, some of the methods of this manuscript may still be applied as a heuristic algorithm.

As a differential operator, $A_f$ is not expected to be a bounded over any RKHS. However, as differentiation is a closed operator over RKHSs consisting of continuously differentiable functions, which follows from \cite[Corollary 4.36]{steinwart2008support}, it can be similarly established that $A_f$ is closed under the same circumstances.

\begin{theorem}Let $H$ be a RKHS of continuously differentiable functions over a set $X$ and $f:\mathbb{R}^n\to\mathbb{R}^n$ be a function such that $A_f$ has nontrivial domain, then $A_f$ is a closed operator.\end{theorem}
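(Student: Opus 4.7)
The plan is to show closedness directly from the definition: take a sequence $\{g_m\}_{m=0}^\infty \subset \mathcal{D}(A_f)$ with $g_m \to g$ in $H$ and $A_f g_m \to h$ in $H$, and then argue that $g \in \mathcal{D}(A_f)$ with $A_f g = h$. The key mechanism is that norm convergence in an RKHS is strong enough to force pointwise convergence of every bounded evaluation-type functional, and in an RKHS of continuously differentiable functions the partial-derivative evaluations $g \mapsto \partial_i g(x)$ are themselves bounded functionals by \cite[Corollary 4.36]{steinwart2008support}. Thus $H$-convergence of $g_m \to g$ automatically upgrades to pointwise convergence of the gradients $\nabla g_m(x) \to \nabla g(x)$ for every $x \in X$.

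First I would fix an arbitrary $x \in X$ and write $(A_f g_m)(x) = \nabla g_m(x) \cdot f(x)$. Pairing with the reproducing kernel $k_x$ and using $\langle A_f g_m, k_x \rangle_H = (A_f g_m)(x)$, the Cauchy--Schwarz inequality gives $(A_f g_m)(x) \to h(x)$ pointwise from the hypothesis $A_f g_m \to h$ in $H$. Next, for each coordinate $i$, the boundedness of the functional $g \mapsto \partial_i g(x)$ on $H$, combined with $g_m \to g$ in $H$, gives $\partial_i g_m(x) \to \partial_i g(x)$; taking the dot product with the fixed vector $f(x) \in \mathbb{R}^n$ yields $\nabla g_m(x) \cdot f(x) \to \nabla g(x) \cdot f(x)$.

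Comparing the two pointwise limits forces $\nabla g(x) \cdot f(x) = h(x)$ for every $x \in X$. Since $h \in H$ by hypothesis, this identity shows that $\nabla g \cdot f$ agrees on $X$ with an element of $H$, so $\nabla g \cdot f \in H$; equivalently, $g \in \mathcal{D}(A_f)$, and $A_f g = h$. This establishes the closedness.

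The main obstacle, and essentially the only nontrivial input, is the boundedness of the partial-derivative evaluation functionals. This is precisely what the cited Steinwart corollary delivers for an RKHS of continuously differentiable functions, but it would be worth noting in the proof that this is where the continuous-differentiability hypothesis on $H$ is used; without it, $H$-convergence does not control $\nabla g_m$ pointwise, and the argument collapses. Once that fact is invoked, the rest is a short diagram-chase between two different pointwise limits of the same sequence of numerical values.
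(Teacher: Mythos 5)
Your proof is correct and follows essentially the same route as the paper: both arguments invoke \cite[Corollary 4.36]{steinwart2008support} to upgrade $H$-convergence of $g_m$ to convergence of the gradients, establish that $A_f g_m \to h$ pointwise, and identify the two pointwise limits to conclude $\nabla g\cdot f = h \in H$. The only cosmetic difference is that the paper phrases the corollary as uniform convergence of partial derivatives while you phrase it as boundedness of the derivative-evaluation functionals; these are the same fact.
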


\begin{proof}
By \cite[Corollary 4.36]{steinwart2008support}, it can be observed that if $\{ g_m\}_{m=1}^\infty \subset H$ such that $\| g_m - g \|_H \to 0$ in $H$ then $\left\{ \frac{\partial}{\partial x_i} g_m \right\}_{m=0}^\infty$ converges to $\frac{\partial}{\partial x_i} g$ uniformly in $X$. Hence, if $\{ g_m \}_{m=0}^\infty \subset \mathcal{D}(A_f) \subset H$ converges to $g$ and $\{ A_f g_m \}_{m=0}^\infty$ converges to $h \in H$ then $\nabla_x g_m(x)f(x)$ converges to $\nabla_x g(x)f(x)$ pointwise. As $A_f g_m(x) = \nabla_x g_m(x)f(x)$, it follows that $h(x) = \lim_{m\to\infty} A_f g_m(x) = \nabla_x g(x)f(x)$. By the definition of $\mathcal{D}(A_f)$, $g \in \mathcal{D}(A_f)$ and $A_f g = h$.
\end{proof}

Thus, $A_f$ is a closed operator for RKHSs consisting of continuously differentiable functions. Consequently, the adjoints of densely defined Liouville operators are themselves densely defined by Lemma \ref{lem:closedops}. Associated with Liouville operators in particular are a special class of functions within the domain of the Liouville operators' adjoints, and these functions are also the main object of study of this manuscript.

\begin{definition}\label{def:occ}Let $X \subset \mathbb{R}^n$ be compact, $H$ be a RKHS of continuous functions over $X$, and $\gamma:[0,T] \to X$ be a continuous trajectory. The functional $g \mapsto \int_0^T g(\gamma(\tau)) d\tau$ is bounded {\color{black}over H}, and may be respresented as $\int_0^T g(\gamma(\tau)) d\tau = \langle g, \Gamma_{\gamma}\rangle_H,$ for some $\Gamma_{\gamma} \in H$ by the Riesz representation theorem. The function $\Gamma_{\gamma}$ is called the occupation kernel corresponding to $\gamma$ in $H$.\end{definition}

\begin{proposition}\label{prop:occupation}
Let $H$ be a RKHS of continuously differentiable functions over a compact set $X$, and suppose that $f:\mathbb{R}^n \to \mathbb{R}^n$ is Lipschitz continuous. If $\gamma:[0,T] \to X$ is a trajectory as in Definition \ref{def:occ} that satisfies $\dot \gamma = f(\gamma)$, then $\Gamma_{\gamma} \in \mathcal{D}(A_f^*)$, and $A_f^* \Gamma_{\gamma} = K(\cdot,\gamma(T)) - K(\cdot,\gamma(0))$.
\end{proposition}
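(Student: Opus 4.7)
The plan is to show directly that the functional $g \mapsto \langle A_f g, \Gamma_\gamma\rangle_H$, defined on $\mathcal{D}(A_f)$, coincides with the bounded linear functional $g \mapsto \langle g, K(\cdot,\gamma(T)) - K(\cdot,\gamma(0))\rangle_H$. Once this identity is in hand, boundedness of the functional is immediate (the representing vector lies in $H$ because point evaluations at $\gamma(0)$ and $\gamma(T)$ produce bona fide kernel functions), which places $\Gamma_\gamma$ in $\mathcal{D}(A_f^*)$ and simultaneously identifies $A_f^*\Gamma_\gamma$ via the uniqueness clause in the Riesz representation theorem.

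The key calculation proceeds in three steps. First, for any $g \in \mathcal{D}(A_f)$, apply the defining property of the occupation kernel from Definition \ref{def:occ} to the continuous function $A_f g = \nabla_x g \cdot f$, obtaining
\begin{equation*}
\langle A_f g, \Gamma_\gamma\rangle_H = \int_0^T \nabla_x g(\gamma(\tau))\cdot f(\gamma(\tau))\, d\tau.
\end{equation*}
Second, because $f$ is Lipschitz continuous and $\gamma$ solves $\dot\gamma = f(\gamma)$ on $[0,T]$, the trajectory $\gamma$ is $C^1$ with $\dot\gamma(\tau) = f(\gamma(\tau))$; and since $H$ consists of continuously differentiable functions, $g \circ \gamma$ is $C^1$ with derivative $\nabla_x g(\gamma(\tau))\cdot \dot\gamma(\tau)$ by the chain rule. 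The fundamental theorem of calculus therefore yields
\begin{equation*}
\int_0^T \nabla_x g(\gamma(\tau))\cdot f(\gamma(\tau))\, d\tau = g(\gamma(T)) - g(\gamma(0)).
\end{equation*}
Third, rewrite the right-hand side using the reproducing property as $\langle g, K(\cdot,\gamma(T))\rangle_H - \langle g, K(\cdot,\gamma(0))\rangle_H$, and combine into a single inner product.

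The resulting identity $\langle A_f g, \Gamma_\gamma\rangle_H = \langle g, K(\cdot,\gamma(T)) - K(\cdot,\gamma(0))\rangle_H$ shows, by the Cauchy--Schwarz inequality, that the map $g \mapsto \langle A_f g, \Gamma_\gamma\rangle_H$ is bounded on $\mathcal{D}(A_f)$, so $\Gamma_\gamma \in \mathcal{D}(A_f^*)$ by the definition of the adjoint domain recalled in Section \ref{sec:preliminaries}. Uniqueness of the Riesz representer then forces $A_f^*\Gamma_\gamma = K(\cdot,\gamma(T)) - K(\cdot,\gamma(0))$, giving the stated formula.

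The only genuinely delicate point is the regularity justification for the chain rule and the fundamental theorem of calculus: one must be sure that $\gamma$ is differentiable (not merely continuous), which is supplied by $f\circ\gamma$ being continuous as a consequence of $f$ being Lipschitz and $\gamma$ being continuous, and that $g\in\mathcal{D}(A_f)\subset H$ is continuously differentiable, which follows from the standing hypothesis that $H$ is a RKHS of $C^1$ functions. Everything else is routine manipulation of the defining properties of occupation kernels, reproducing kernels, and adjoints.
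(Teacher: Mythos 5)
Your proof is correct and follows essentially the same route as the paper's: both arguments reduce $\langle A_f g, \Gamma_\gamma\rangle_H$ to the integral of the total derivative of $g\circ\gamma$, apply the fundamental theorem of calculus to obtain $g(\gamma(T))-g(\gamma(0))$, invoke the reproducing property and Cauchy--Schwarz to establish boundedness, and identify the adjoint via the Riesz representation. Your added care about the regularity needed for the chain rule is a welcome elaboration of what the paper leaves implicit, but it is not a different argument.
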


\begin{proof}
The establishment of $\Gamma_{\gamma} \in \mathcal{D}(A_f^*)$, requires the demonstration that the functional $g \mapsto \langle A_f g, \Gamma_{\gamma} \rangle_H$ is bounded over $\mathcal{D}(A_f)$. Note that
\begin{equation}\label{eq:total-derivative}
    \int_{0}^T \nabla_x g(\gamma(t)) f(\gamma(t)) dt = g(\gamma(T))-g(\gamma(0)) = \langle g, K(\cdot, \gamma(T))-K(\cdot, \gamma(0))\rangle_H
\end{equation}
as the integrand of \eqref{eq:total-derivative} is the total derivative of $g(\gamma(t))$. The left hand side of \eqref{eq:total-derivative} may be expressed as $\langle A_f g, \Gamma_{\gamma} \rangle_H$, while the right hand side satisfies the bound 
\begin{gather*}|g(\gamma(T))-g(\gamma(0))| = |\langle g, K(\cdot,\gamma(T))-K(\cdot,\gamma(0))\rangle_H|\\
\le \|g\|_H \| K(\cdot,\gamma(T))-K(\cdot,\gamma(0)) \|_H,
\end{gather*}
which establishes the boundedness of $g \mapsto \langle A_f g, \Gamma_{\gamma} \rangle_H$. Moreover, since \[ \langle A_f g, \Gamma_{\gamma} \rangle_H = \langle g, K(\cdot, \gamma(T))-K(\cdot, \gamma(0))\rangle_H, \] it follows that $A_f^* \Gamma_{\gamma} =  K(\cdot, \gamma(T))-K(\cdot, \gamma(0))$
\end{proof}

Proposition \ref{prop:occupation} completes the integration of nonlinear dynamical systems with RKHSs. In particular, valid trajectories for the dynamical system appear as occupation kernels within the domain of the adjoint of the Liouville operator corresponding to the dynamics. This intertwining allows for the expression of finite dimensional nonlinear dynamics as linear systems in infinite dimensions.

Moreover, the relation \[\langle A_f g, \Gamma_{\gamma} \rangle_H = g(\gamma(T)) - g(\gamma(0)) \text{ for all } g \in \mathcal{D}(A_f) \] uniquely determines $\Gamma_{\gamma}$. Consequently, this relation will be used subsequently to establish constraints for parameter identification in a system identification setting.

While occupation kernels representing solutions to the dynamical systems have a clear image under the adjoint of the corresponding Liouville operators, it can be demonstrated that all occupation kernels corresponding to continuous trajectories are also in the domain of the adjoint of Liouville operators.
\begin{theorem}\label{thm:adjoint-on-occupation-kernel}
Let $H$ be a RKHS of continuously differentiable functions over a set $X \subset \mathbb{R}^n$, let $f$ be a symbol for a densely defined Liouville operator, $A_{f} : \mathcal{D}(A_f) \to H$, and let $\gamma:[0,1] \to X$ be a continuously differentiable trajectory. Then $\Gamma_{\gamma} \in \mathcal{D}(A_f^*)$ and for each $x_0 \in X$, $K(\cdot,x_0) \in \mathcal{D}(A_f^*)$. Moreover, the images of each function may be expressed as
\begin{gather*}
    A_{f}^* K(\cdot,x_0) = \nabla_2 K(\cdot,x_0) f(x_0), \text{ and}\\
    A_{f}^* \Gamma_\gamma = \int_{0}^T \nabla_2 K(\cdot,\gamma(t)) f(\gamma(t))dt,
\end{gather*}
where $\nabla_2$ indicates that the gradient is performed on the second variable.
\end{theorem}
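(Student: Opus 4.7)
The plan is to handle the two cases in sequence, starting with the kernel $K(\cdot,x_0)$ because its proof provides the pointwise identity that gets integrated up to give the statement for $\Gamma_\gamma$. Throughout I will lean on the standard RKHS fact (a consequence of \cite[Corollary 4.36]{steinwart2008support}, the same result invoked just before in the excerpt) that whenever $H$ consists of continuously differentiable functions, the partial derivative functional at a point is represented by the partial derivative of the kernel: for every $g \in H$ and every $i$,
\begin{equation*}
    \partial_i g(x_0) = \langle g,\, \partial_{2,i} K(\cdot,x_0)\rangle_H,
\end{equation*}
so in particular $\partial_{2,i} K(\cdot,x_0) \in H$.

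For the kernel case, I take arbitrary $g \in \mathcal{D}(A_f)$ and compute
\begin{equation*}
    \langle A_f g,\, K(\cdot,x_0)\rangle_H \;=\; (A_f g)(x_0) \;=\; \nabla_x g(x_0)\cdot f(x_0) \;=\; \sum_i f_i(x_0)\,\langle g,\,\partial_{2,i}K(\cdot,x_0)\rangle_H,
\end{equation*}
using the reproducing property and then the partial-derivative identity. Linearity of the inner product collects this into $\langle g,\, \nabla_2 K(\cdot,x_0)\, f(x_0)\rangle_H$, and Cauchy--Schwarz bounds the resulting functional of $g$ by $\|\nabla_2 K(\cdot,x_0)f(x_0)\|_H\,\|g\|_H$. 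By definition of the adjoint this proves both $K(\cdot,x_0) \in \mathcal{D}(A_f^*)$ and the stated formula for $A_f^* K(\cdot,x_0)$.

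For the occupation kernel case, I again fix $g \in \mathcal{D}(A_f)$ and unfold using the defining property of $\Gamma_\gamma$ from Definition \ref{def:occ}:
\begin{equation*}
    \langle A_f g,\, \Gamma_\gamma\rangle_H \;=\; \int_0^T (A_f g)(\gamma(t))\,dt \;=\; \int_0^T \nabla_x g(\gamma(t))\cdot f(\gamma(t))\,dt.
\end{equation*}
Substituting the pointwise identity proved in the previous paragraph turns the integrand into $\langle g,\, \nabla_2 K(\cdot,\gamma(t))\, f(\gamma(t))\rangle_H$. Pulling $g$ out of the integral (as a Bochner integral in $H$) gives the target formula, and then boundedness in $g$ again follows from Cauchy--Schwarz, establishing $\Gamma_\gamma \in \mathcal{D}(A_f^*)$ with the asserted image.

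The one step that is not a mechanical calculation, and which I expect to be the main obstacle, is justifying the interchange of the integral with the $H$-inner product in the $\Gamma_\gamma$ computation. The cleanest way is to view $t \mapsto \nabla_2 K(\cdot,\gamma(t))\, f(\gamma(t))$ as an $H$-valued map on $[0,T]$ and check Bochner integrability: continuity of this map follows from continuity of $\gamma$, continuity of $f$, and the joint continuity of $(x,y)\mapsto \partial_{2,i}K(\cdot,y)$ in the norm of $H$ (again from \cite[Corollary 4.36]{steinwart2008support}), while boundedness follows from compactness of $X$ together with continuity of $f$ on $X$. Once Bochner integrability is in hand, the interchange $\int \langle g,\cdot\rangle_H = \langle g,\int\cdot\rangle_H$ is automatic, and the rest of the proof is routine.
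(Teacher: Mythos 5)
Your proof of the first claim (that $K(\cdot,x_0)\in\mathcal{D}(A_f^*)$ with image $\nabla_2K(\cdot,x_0)f(x_0)$) is correct and essentially identical to the paper's: both rest on the derivative-reproducing property from \cite[Corollary 4.36]{steinwart2008support} and a linear combination over the coordinates. For the occupation-kernel claim, however, you take a genuinely different route. The paper does not compute $\langle A_fg,\Gamma_\gamma\rangle_H$ directly; instead it approximates $\Gamma_\gamma$ by the Riemann sums $\hat\Gamma_\gamma=\sum_i(t_i-t_{i-1})K(\cdot,\gamma(t_i))$, which lie in $\mathcal{D}(A_f^*)$ by the first claim, invokes Proposition \ref{prop:quadrature-convergence} for norm convergence $\hat\Gamma_\gamma\to\Gamma_\gamma$, and concludes via the closedness of $A_f^*$ (adjoints of densely defined operators are always closed) that $\Gamma_\gamma\in\mathcal{D}(A_f^*)$ with the limiting image. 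You instead unfold $\langle A_fg,\Gamma_\gamma\rangle_H=\int_0^T(A_fg)(\gamma(t))\,dt$ via Definition \ref{def:occ}, substitute the pointwise identity, and swap the integral with the inner product after checking Bochner integrability of $t\mapsto\nabla_2K(\cdot,\gamma(t))f(\gamma(t))$. Both arguments are sound, and you have correctly isolated the only nontrivial step in your version. The trade-off: the paper's closedness argument avoids any explicit vector-valued integration theory, but it quietly requires showing that $A_f^*\hat\Gamma_\gamma$ converges in $H$-norm to the claimed integral, which is the same $H$-valued-integral issue in disguise (and it leans on a forward reference to a proposition proved in a later section). Your direct computation is self-contained, makes explicit the sense in which the $H$-valued integral in the theorem statement is even defined, and yields the domain membership and the formula in one pass; the cost is the (routine, and correctly justified) continuity check needed for the interchange.
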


\begin{proof}
For each $i=1,\ldots,n$, the boundedness of the linear functional $g \mapsto \left\langle \frac{\partial}{\partial x_i} g, K(\cdot,x_0)\right\rangle_H$ follows from \cite[Corollary 4.36]{steinwart2008support}. Let $\{ e_m \}_{m=1}^\infty \subset H$ be an orthonormal basis for $H$, then $\frac{\partial}{\partial x_i} K(\cdot, \cdot) = \sum_{m=1}^\infty e_m(\cdot) \frac{\partial}{\partial x_i} e_m(\cdot)$. More generally, if $g = \sum_{m=1}^\infty a_m e_m$ then $\frac{\partial}{\partial x_i} g(x_0) = \sum_{m=1}^\infty a_m \frac{\partial}{\partial x_i} e_m(x_0)$. Thus,
\[\frac{\partial}{\partial x_i} g(x_0) f_i(x_0) = \left\langle g, \frac{\partial}{\partial x_i} K(\cdot,x_0) \right\rangle_H f_i(x_0) = \left\langle g, \frac{\partial}{\partial x_i} K(\cdot,x_0) f_i(x_0) \right\rangle_H. \]
It follows that $A_f^* K(\cdot,x_0) = \nabla_2 K(\cdot, x_0) f(x_0)$ via a linear combination. The formula for $A_{f}^* \Gamma_\gamma$ follows from a limiting argument leveraging Proposition \ref{prop:quadrature-convergence} below and the closedness of $A_{f}^*$.
\end{proof}

{\color{black}The last theorem of this section addresses a characterization of symbols that give rise to densely defined Liouville operators over RKHSs consisting of real analytic functions. Many frequently used RKHSs are precisely of this form, including the exponential dot product kernels and the Gaussian RBF kernel's native spaces \cite{steinwart2008support,lu2005positive}. This characterization is essential for the generation of an inner product in Section \ref{sec:testfree}.

\begin{theorem}\label{thm:real-analytic-symbol}
If $f$ is the symbol for a densely defined Liouville operator over a RKHS consisting of real analytic functions of several variables over a set $X \subset \mathbb{R}^n$ such that the collection of gradient of the functions in the space are universal in $\mathbb{R}^n$, then $f$ must be a vector valued real analytic function of several variables.
\end{theorem}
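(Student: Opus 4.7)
The plan is to exploit the interplay between the density of $\mathcal{D}(A_f)$ in $H$, the universality of the gradient map, and the analyticity of members of $H$ to reconstruct $f$ locally as a ratio of analytic objects. The core observation is that if $g \in \mathcal{D}(A_f)$, then both $g$ and $A_f g = \nabla g \cdot f$ are real analytic on $X$. So if we can secure enough such $g$'s to make the local Jacobian of gradients invertible, we can solve for $f$ as an analytic expression.

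First I would fix an arbitrary point $x_0 \in X$ and use the universality assumption to produce functions $g_1^0, \dots, g_n^0 \in H$ such that the $n \times n$ matrix
\begin{equation*}
J^0(x_0) = \bigl[\nabla g_i^0(x_0)\bigr]_{i=1}^n
\end{equation*}
is invertible. By the same Riesz/RKHS argument used in the proof of Theorem~\ref{thm:adjoint-on-occupation-kernel} (drawing on \cite[Corollary 4.36]{steinwart2008support}), each partial derivative evaluation $g \mapsto \tfrac{\partial}{\partial x_j} g(x_0)$ is a bounded linear functional on $H$. Since $\mathcal{D}(A_f)$ is dense in $H$, I can approximate each $g_i^0$ in $H$-norm by a $g_i \in \mathcal{D}(A_f)$, which forces $\nabla g_i(x_0)$ to be arbitrarily close to $\nabla g_i^0(x_0)$. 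Invertibility of the Jacobian is an open condition, so for close enough approximants the matrix $J(x_0) = [\nabla g_i(x_0)]$ is still invertible, and now $g_1, \dots, g_n \in \mathcal{D}(A_f)$.

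Next, because each $g_i$ is real analytic on $X$, the matrix-valued map $x \mapsto J(x)$ is real analytic, and invertibility persists on some open neighborhood $U$ of $x_0$. On $U$ the defining relation $A_f g_i = \nabla g_i \cdot f$ assembles into the linear system $J(x)\, f(x) = h(x)$, where $h(x) = (A_f g_1(x), \dots, A_f g_n(x))^\top$ is real analytic because each $A_f g_i \in H$. Cramer's rule (or the analytic inverse function formula) shows $J^{-1}$ is real analytic on $U$, so
\begin{equation*}
f(x) = J(x)^{-1} h(x)
\end{equation*}
is real analytic on $U$. Since $x_0 \in X$ was arbitrary, $f$ is real analytic on $X$, and by real analytic continuation along its power series representation, this extends to $f$ being a real analytic vector field of several variables.

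The main obstacle I anticipate is the second step: interpreting the phrase ``the collection of gradients is universal in $\mathbb{R}^n$'' strongly enough to guarantee that functions whose gradients span $\mathbb{R}^n$ at $x_0$ can be found \emph{and} then pulled inside $\mathcal{D}(A_f)$ without losing the spanning property. The combination of (i) density of $\mathcal{D}(A_f)$, (ii) continuity of the partial-derivative evaluation functionals on $H$, and (iii) the open nature of the invertibility condition is exactly what makes this approximation step go through; I expect this to be the only nontrivial analytic point, after which the passage to analyticity of $f$ is formal.
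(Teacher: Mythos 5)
Your proposal is correct and follows essentially the same route as the paper's proof: both reduce the problem to producing $g_1,\dots,g_n \in \mathcal{D}(A_f)$ whose gradients form a basis of $\mathbb{R}^n$ at an arbitrary $x_0$, and then recover each component of $f$ via Cramer's rule as a quotient of determinants of real analytic functions. The only minor variation is in how those $g_i$ are obtained: you approximate spanning functions from $H$ by elements of $\mathcal{D}(A_f)$ and invoke openness of invertibility, whereas the paper argues by contradiction that if the gradients of $\mathcal{D}(A_f)$ failed to span at $x_0$ then the dense subspace $\mathcal{D}(A_f)$ would be annihilated by the nonzero Riesz representer of $g \mapsto \nabla g(x_0)v$ --- both hinging on the same boundedness of the derivative evaluation functionals.
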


\begin{remark}
The universality of the gradients of the kernels are readily established for the exponential dot product kernel, where the gradients of the monomials will span the collection of vectors of monomials in $\mathbb{R}^n$. The Gaussian RBF's universality can be found in \cite{caponnetto2008universal}.
\end{remark}

\begin{proof}

Write $f = (f_1,\ldots,f_n)^T$.

Given any $v \in \mathbb{R}^n{\setminus \{0\}}$ and $x \in X$, the proof of Theorem \ref{thm:adjoint-on-occupation-kernel} established that the functional $g \mapsto \nabla g(x) v$ is bounded as real analytic functions are continuously differentiable. Hence, there is a function $h_{x,v}$ that represents that functional through the inner product of the RKHS. The universality of the gradients of functions in the Hilbert space guarantees that there is at least one function for which $\nabla g(x) v$ is nonzero. Hence, $h_{x,v}$ is not the zero function for any {$v \in \mathbb{R}^n\setminus \{0\}$ and $x \in \mathbb{R}^n$}.

Select $x \in X$, then $W_x := \vspan \{ \nabla g(x) \}_{g \in \mathcal{D}(A_f)} = \mathbb{R}^n$. If not, then there is a {$v \in \mathbb{R}^n\setminus\{0\}$} such that $\nabla g(x) v = 0$ for all $g \in \mathcal{D}(A_f)$, and hence, $\langle g, h_{x,v} \rangle_{H} = 0$ for all $g \in \mathcal{D}(A_f)$. As a result, $\mathcal{D}(A_f)$ has codimension at least 1 in $H$ and is not dense (which is a contradiction).

Thus, for a fixed $x_0 \in \mathbb{R}^n$, a complete basis for $\mathbb{R}^n$ may be selected from $W_{x_0}$ as $\nabla g_1({x_0}), \ldots, \nabla g_n({x_0})$, and there are linear combinations of these vectors that yield the standard basis in $\mathbb{R}^n$ at the point $x_0$. In particular, this means that   $x\mapsto\det(B(x))$, with 
$B(x) := \begin{pmatrix}
        \nabla g_1(x)^T&
        \cdots &
        \nabla g_n(x)^T
    \end{pmatrix}^T,$
is an analytic function that is nonvanishing at $x_0$ (by linear independence).
The analyticity follows since products and sums of real analytic functions are real analytic, and each component of $\nabla g_i(x)$ is a real analytic function. 

Now consider $G_i(x) := A_f g_i(x) = \nabla g_i(x) f(x)$. Let $B_i(x)$ be matrix obtained by replacing the $i$-th column of $B(x)$ by the column vector consisting of the functions $G_i(x)$. Then, $\det(B_i(x))$ is also a real analytic function.

Finally, by Cramer's rule, $f_i(x) = \det(B_i(x))/\det(B(x))$, and since $\det(B(x_0)) \neq 0$, $f_i$ is real analytic at $x_0$. As $x_0$ was arbitrary, $f_i$ is real analytic everywhere.
\end{proof}
}

\section{Estimation of Occupation Kernels}\label{sec:occupation-estimation}

Approximating the value of an inner product against an occupation kernel in a RKHS can be performed leveraging quadrature techniques for integration. The occupation kernels themselves can be expressed as an integral against the kernel function in a RKHS as demonstrated in Proposition \ref{prop:integral-rep}. 

\begin{proposition}\label{prop:integral-rep}
Let $H$ be a RKHS over a compact set $X$ consisting of continuous functions and let $\gamma : [0,T] \to X$ be a continuous trajectory as in Definition \ref{def:occ}. The occupation kernel corresponding to $\gamma$ in $H$, $\Gamma_{\gamma}$, may be expressed as 
\begin{equation}\label{eq:integral-rep}\Gamma_\gamma(x) = \int_0^T K(x,\gamma(t)) dt.\end{equation}
\end{proposition}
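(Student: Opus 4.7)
The plan is to prove the identity by pointwise evaluation, using the reproducing property twice: once to unpack $\Gamma_\gamma(x)$ and once to apply the defining functional of the occupation kernel.

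First, I would recall that $\Gamma_\gamma \in H$ exists by Definition \ref{def:occ}, since the functional $g \mapsto \int_0^T g(\gamma(\tau))\,d\tau$ is bounded on $H$: continuity of $K$ on the compact set $X\times X$ and continuity of $\gamma$ give a uniform bound $|g(\gamma(\tau))| \le \|g\|_H \sqrt{K(\gamma(\tau),\gamma(\tau))}$ that is integrable on $[0,T]$. So the object on the left of \eqref{eq:integral-rep} is well defined, and the integral on the right converges for every $x \in X$.

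Next, fix $x \in X$. Apply the reproducing property to $\Gamma_\gamma$ at the point $x$:
\begin{equation*}
\Gamma_\gamma(x) \;=\; \langle \Gamma_\gamma, K(\cdot,x) \rangle_H \;=\; \langle K(\cdot,x), \Gamma_\gamma \rangle_H,
\end{equation*}
where the second equality uses symmetry of the real inner product. Now invoke the defining identity $\langle g, \Gamma_\gamma \rangle_H = \int_0^T g(\gamma(\tau))\,d\tau$ with the particular choice $g = K(\cdot,x)$, which is indeed a member of $H$. This gives
\begin{equation*}
\langle K(\cdot,x), \Gamma_\gamma \rangle_H \;=\; \int_0^T K(\gamma(\tau),x)\,d\tau \;=\; \int_0^T K(x,\gamma(\tau))\,d\tau,
\end{equation*}
where the final step uses symmetry of the kernel function $K(u,v) = K(v,u)$. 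Chaining the two displays yields \eqref{eq:integral-rep}.

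There is no real obstacle here; the only subtlety worth flagging is the implicit interchange between the Hilbert-space evaluation functional and the integral, which is handled entirely by the Riesz representation baked into Definition \ref{def:occ}, so no Bochner-integral machinery is required. The proof is essentially a two-line calculation, and the plan is simply to present it cleanly.
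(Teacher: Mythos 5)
Your proof is correct and follows exactly the same route as the paper's: apply the reproducing property to write $\Gamma_\gamma(x) = \langle \Gamma_\gamma, K(\cdot,x)\rangle_H$, then use the defining functional of the occupation kernel with $g = K(\cdot,x)$ together with symmetry of the inner product and of $K$. The added remark justifying boundedness of the integration functional is a harmless (and reasonable) supplement to what the paper takes for granted from Definition \ref{def:occ}.
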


\begin{proof}
Note that $\Gamma_\gamma(x) = \langle \Gamma_\gamma, K(\cdot,x)\rangle_H$, by the reproducing property of $K$. Consequently,
\begin{gather*}
    \Gamma_\gamma(x) = \langle \Gamma_\gamma, K(\cdot,x)\rangle_H= \langle K(\cdot,x), \Gamma_\gamma \rangle_H\\
     = \int_0^T K(\gamma(t),x) dt
    = \int_0^T K(x,\gamma(t)) dt,
\end{gather*}
which establishes the result.
\end{proof}

Leveraging Proposition \ref{prop:integral-rep}, quadrature techniques can be demonstrated to give not only pointwise convergence but also norm convergence in the RKHS, which is a strictly stronger result.

\begin{proposition}\label{prop:quadrature-convergence}
Under the hypothesis of Proposition \ref{prop:integral-rep}, let $t_0 = 0 < t_1 < t_2 < \ldots < t_{F} = T$, suppose that $\gamma$ is a continuously differentiable trajectory and $H$ is composed of continuously differentiable functions. Consider 
\begin{equation}\label{eq:quadrature-rep}
\hat \Gamma_\gamma(x) := \sum_{i=1}^F (t_{i} - t_{i-1}) K(x,\gamma(t_i)).
\end{equation}
The norm distance is bounded as $\| \Gamma_{\gamma} - \hat \Gamma_{\gamma} \|_H^2 = O(h),$ where $h = \max_{i=1,\ldots,F}|t_{i} - t_{i-1}|$.
\end{proposition}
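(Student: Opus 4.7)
The plan is to leverage the integral representation from Proposition \ref{prop:integral-rep} to express $\Gamma_\gamma - \hat{\Gamma}_\gamma$ as a Hilbert-space-valued Riemann sum error, and then reduce the norm estimate to a Lipschitz bound on the kernel sections $z \mapsto K(\cdot, z)$ viewed as maps into $H$. Concretely, the Bochner-integral form $\Gamma_\gamma = \int_0^T K(\cdot,\gamma(t))\,dt$ (justified by the continuity in $H$ of $t \mapsto K(\cdot,\gamma(t))$ on the compact interval $[0,T]$) combined with \eqref{eq:quadrature-rep} yields
\[
\Gamma_\gamma - \hat{\Gamma}_\gamma \;=\; \sum_{i=1}^F \int_{t_{i-1}}^{t_i} \left[ K(\cdot, \gamma(t)) - K(\cdot, \gamma(t_i)) \right] dt,
\]
so that the triangle inequality in $H$ gives
\[
\|\Gamma_\gamma - \hat{\Gamma}_\gamma\|_H \;\le\; \sum_{i=1}^F \int_{t_{i-1}}^{t_i} \|K(\cdot, \gamma(t)) - K(\cdot, \gamma(t_i))\|_H \, dt.
\]

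The key step is then to control $\|K(\cdot, y) - K(\cdot, z)\|_H$ for $y,z$ in the compact image $\gamma([0,T]) \subset X$. By the reproducing property,
\[
\|K(\cdot, y) - K(\cdot, z)\|_H^2 \;=\; K(y,y) - 2K(y,z) + K(z,z),
\]
and because $H$ consists of continuously differentiable functions, \cite[Corollary 4.36]{steinwart2008support} ensures that each partial derivative $\frac{\partial}{\partial x_i} K(\cdot, z)$ lies in $H$ and that $z \mapsto K(\cdot, z)$ is $C^1$ as an $H$-valued map. The compactness of $X$ then yields a constant $L_K > 0$ with $\|K(\cdot,y) - K(\cdot,z)\|_H \le L_K \|y - z\|$, and the $C^1$ assumption on $\gamma$ provides a constant $L_\gamma$ with $\|\gamma(t) - \gamma(s)\| \le L_\gamma |t-s|$. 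Together, these give
\[
\|K(\cdot, \gamma(t)) - K(\cdot, \gamma(t_i))\|_H \;\le\; L_K L_\gamma \, |t - t_i|.
\]

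Plugging this estimate back in and doing the routine Riemann-sum computation,
\[
\|\Gamma_\gamma - \hat{\Gamma}_\gamma\|_H \;\le\; L_K L_\gamma \sum_{i=1}^F \int_{t_{i-1}}^{t_i} (t_i - t)\,dt \;=\; \frac{L_K L_\gamma}{2} \sum_{i=1}^F (t_i - t_{i-1})^2 \;\le\; \frac{L_K L_\gamma T}{2}\, h,
\]
so that $\|\Gamma_\gamma - \hat{\Gamma}_\gamma\|_H^2 \le C h^2 = O(h)$, which is the claim (in fact slightly stronger). The principal obstacle is establishing the Lipschitz estimate on the kernel sections in the $H$-norm; once that is in hand, the remaining bookkeeping is a standard quadrature estimate. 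This is precisely where the hypothesis that $H$ consists of continuously differentiable functions enters, since without it the section map $z \mapsto K(\cdot, z)$ need not be differentiable in the $H$-norm and the above Lipschitz bound would not follow.
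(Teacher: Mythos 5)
Your proof is correct, but it takes a genuinely different route from the paper's. The paper expands $\| \Gamma_\gamma - \hat\Gamma_\gamma \|_H^2$ into the three scalar quantities $\|\Gamma_\gamma\|_H^2$, $\|\hat\Gamma_\gamma\|_H^2$, and $\langle \Gamma_\gamma, \hat\Gamma_\gamma\rangle_H$, writes each as a double integral or double Riemann sum of $K(\gamma(t),\gamma(\tau))$ over $[0,T]^2$, and compares them cell by cell using the two-variable mean value theorem together with the bound $\sup \|\nabla K\|_2 \max|\dot\gamma|$; each of the $F^2$ cells contributes an error of order $(t_i-t_{i-1})(t_j-t_{j-1})h$, which sums to $O(h)$ for the squared norm. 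You instead treat $\Gamma_\gamma$ as an $H$-valued (Bochner) integral, decompose $\Gamma_\gamma - \hat\Gamma_\gamma$ into per-interval Riemann-sum errors, and push the triangle inequality through the integral, reducing everything to Lipschitz continuity of the kernel section map $z \mapsto K(\cdot,z)$ in the $H$-norm --- which, as you correctly identify, is exactly where the $C^1$ hypothesis enters, via the same \cite[Corollary 4.36]{steinwart2008support} machinery the paper invokes elsewhere. (A small refinement: rather than composing a spatial Lipschitz constant $L_K$ with a temporal one $L_\gamma$, which implicitly requires the spatial bound on a convex set containing $\gamma([0,T])$, it is cleaner to differentiate $s \mapsto K(\cdot,\gamma(s))$ directly and bound $\|\nabla_2 K(\cdot,\gamma(s))\dot\gamma(s)\|_H$; this mirrors the paper's own use of the mean value inequality and avoids the issue entirely.) Your approach buys a strictly sharper conclusion, namely $\|\Gamma_\gamma - \hat\Gamma_\gamma\|_H = O(h)$ and hence $\|\Gamma_\gamma - \hat\Gamma_\gamma\|_H^2 = O(h^2)$, whereas the paper's cell-by-cell comparison only yields $O(h)$ for the square; what the paper's approach buys in exchange is that it never leaves the scalar setting, requiring no vector-valued integration. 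Both arguments establish the stated $O(h)$ bound.
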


\begin{proof}
Consider, \[\| \Gamma_{\gamma} - \hat \Gamma_{\gamma} \|_H^2 = \|\Gamma_{\gamma}\|^2 + \| \hat \Gamma_{\gamma} \|^2 - 2 \langle \Gamma_{\gamma}, \hat \Gamma_{\gamma} \rangle_H.\] The norm of the approximation can be expanded as
\begin{gather}\|\hat \Gamma_\gamma \|_H^2 = \langle \hat \Gamma_{\gamma},\hat \Gamma_{\gamma}\rangle_H = \nonumber\\ \label{eq:quadrature-approx}
\sum_{i=1}^F \sum_{j=1}^F (t_{i} - t_{i-1}) (t_{j} - t_{j-1}) K(\gamma(t_j), \gamma(t_i))\end{gather}
via the reproducing property of $K$. Now compare each term in \eqref{eq:quadrature-approx} to the corresponding integral, 
\begin{gather}\int_{t_{i-1}}^{t_{i}} \int_{t_{j-1}}^{t_j} K(\gamma(t),\gamma(\tau)) dt d\tau \nonumber\\\label{eq:integral-difference} - (t_{i} - t_{i-1}) (t_{j} - t_{j-1}) K(\gamma(t_j), \gamma(t_i)).
\end{gather}
By the mean value theorem, there is a point $(\tau^*,t^*) \in [t_{i-1},t_i] \times [t_{j-1},t_j]$ such that 
\begin{gather*}\int_{t_{i-1}}^{t_{i}} \int_{t_{j-1}}^{t_j} K(\gamma(t),\gamma(\tau)) dt d\tau\\ = (t_{i} - t_{i-1}) (t_{j} - t_{j-1}) K(\gamma(t^*), \gamma(\tau^*)).\end{gather*}
Hence, \eqref{eq:integral-difference} may be written as 
\begin{gather*}
    (t_{i} - t_{i-1}) (t_{j} - t_{j-1})(K(\gamma(t^*), \gamma(\tau^*)) - K(\gamma(t_j), \gamma(t_i))).
\end{gather*}
Leveraging the mean value inequality \cite{rudin1964principles}, 
\begin{gather*}
    |K(\gamma(t^*), \gamma(\tau^*)) - K(\gamma(t_j), \gamma(t_i))| \le\\ \sup_{x,y \in X} \| \nabla K(x,y) \|_2 \max_{0 < t < T} | \dot \gamma(t) | \| (\tau^*, t^*) - (t_i,t_j) \|_2.
\end{gather*}
Taking $h = \max_{i=1,\ldots,F} | t_{i} - t_{i-1}|$ and combining the above equations, it can be observed that \[\| \hat \Gamma_{\gamma}\|_H^2 = \| \Gamma_{\gamma}\|_H^2 + O(h).\]

Similarly, it may be demonstrated that $\langle \hat \Gamma_\gamma, \Gamma_{\gamma} \rangle_H = \| \Gamma_{\gamma}\|_{H}^2 + O(h),$ and the conclusion follows. 
\end{proof}

It should be clear from the proof of Proposition \ref{prop:quadrature-convergence} that higher order quadrature rules for estimating the integral in \eqref{eq:integral-rep} will also lead to higher order convergence rates of the difference in Hilbert space norms of the occupation kernel and the quadrature estimate with the added caveat of higher order continuous differentiability of the kernels and trajectories. For example, Simpson's Rule is a quadrature method that yields a convergence rate of $O(h^4)$ \cite{atkinson2008introduction}, and the following theorem captures obtained convergence rate for the corresponding approximation of the occupation kernel.

\begin{theorem}\label{thm:simpsons-convergence}
Under the hypothesis of Proposition \ref{prop:integral-rep}, let $t_0 = 0 < t_1 < t_2 < \ldots < t_{F} = T$ (with $F$ even and $t_i$ evenly spaced), suppose that $\gamma$ is a fourth order continuously differentiable trajectory and $H$ is composed of fourth order continuously differentiable functions. Set $h$ to satisfy $t_i = t_0 + ih$, and consider 
\begin{gather}\label{eq:quadrature-rep-simpsons}
\hat \Gamma_\gamma(x) := \frac{h}{3}\left(K(x,\gamma(t_0)) + 4 \sum_{i=1}^{\frac{F}{2}} K(x,\gamma(t_{2\cdot i - 1}))\right. \nonumber\\
\left.+ 2 \sum_{i=1}^{\frac{F}{2}-1} K(x,\gamma(t_{2\cdot i})) + K(x,\gamma(t_F)) \right).
\end{gather}
The norm distance is bounded as $\| \Gamma_{\gamma} - \hat \Gamma_{\gamma} \|_H^2 = O({\color{black}h^4}).$
\end{theorem}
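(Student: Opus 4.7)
My plan is to mirror the structure used in the proof of Proposition \ref{prop:quadrature-convergence}, expanding
\[\|\Gamma_\gamma - \hat\Gamma_\gamma\|_H^2 = \|\Gamma_\gamma\|_H^2 + \|\hat\Gamma_\gamma\|_H^2 - 2\langle \Gamma_\gamma, \hat\Gamma_\gamma\rangle_H,\]
and then showing that each of the last two terms equals $\|\Gamma_\gamma\|_H^2$ up to an additive error of order $h^4$. Proposition \ref{prop:integral-rep} together with the reproducing property gives the baseline identity $\|\Gamma_\gamma\|_H^2 = \int_0^T\!\int_0^T K(\gamma(t),\gamma(\tau))\,dt\,d\tau$, which is the common target of both estimates.

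For the cross term, I would write $\hat\Gamma_\gamma$ as a composite Simpson sum with weights $c_k \in \{1,2,4\}$ and apply the reproducing property to obtain
\[\langle \Gamma_\gamma, \hat\Gamma_\gamma\rangle_H = \tfrac{h}{3}\sum_{k=0}^{F} c_k\, \Gamma_\gamma(\gamma(t_k)) = \tfrac{h}{3}\sum_{k=0}^{F} c_k \int_0^T K(\gamma(t),\gamma(t_k))\,dt.\]
The right hand side is precisely composite Simpson's rule applied to the function $\phi(s) := \int_0^T K(\gamma(t),\gamma(s))\,dt$ on $[0,T]$, whose exact integral equals $\|\Gamma_\gamma\|_H^2$. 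The classical Simpson error bound, which requires $\phi \in C^4([0,T])$ with bounded fourth derivative, then delivers $\langle \Gamma_\gamma, \hat\Gamma_\gamma\rangle_H = \|\Gamma_\gamma\|_H^2 + O(h^4)$.

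For the squared-norm term, the reproducing property yields
\[\|\hat\Gamma_\gamma\|_H^2 = \frac{h^2}{9}\sum_{k=0}^{F}\sum_{\ell=0}^{F} c_k c_\ell\, K(\gamma(t_k),\gamma(t_\ell)),\]
which is the tensor-product (iterated) composite Simpson's rule applied to $\Phi(t,\tau) := K(\gamma(t),\gamma(\tau))$. I would carry out the standard iterated-quadrature analysis: apply the 1D Simpson bound first in $t$ to produce $\int_0^T \phi(\tau)\,d\tau + O(h^4)$ uniformly in $\tau$, then apply the 1D bound again in $\tau$ to the resulting Simpson sum in $t$, picking up another $O(h^4)$ contribution (bounded using the fact that the Simpson weights sum to $3F$, so $\tfrac{h}{3}\sum_k c_k = T$ keeps the $\tau$-derivatives of the inner quadrature uniformly controlled). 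This gives $\|\hat\Gamma_\gamma\|_H^2 = \|\Gamma_\gamma\|_H^2 + O(h^4)$, and combining with the cross-term estimate yields the theorem.

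The main obstacle is the smoothness bookkeeping: the Simpson error constants depend on $\|\phi^{(4)}\|_\infty$ and on a mixed fourth-partials norm of $\Phi$. Since $H$ consists of fourth-order continuously differentiable functions, the natural extension of \cite[Corollary 4.36]{steinwart2008support} to higher derivatives implies that the kernel $K(x,y)$ has continuous fourth-order partial derivatives in each argument on the compact product $X \times X$; composing with the $C^4$ trajectory $\gamma$ via the chain rule then yields $\Phi,\phi \in C^4$ with uniform bounds on their fourth derivatives, which justifies differentiation under the integral sign to control $\phi^{(4)}$ and supplies every constant needed in the Simpson error bounds.
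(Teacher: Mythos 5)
Your proposal is correct and follows essentially the same route as the paper's proof: the three-term expansion of $\|\Gamma_\gamma-\hat\Gamma_\gamma\|_H^2$, the identification of $\|\hat\Gamma_\gamma\|_H^2$ as a two-dimensional (tensor-product) composite Simpson's rule for $\int_0^T\!\int_0^T K(\gamma(t),\gamma(\tau))\,dt\,d\tau$, and the identification of the cross term as Simpson's rule in one variable with exact integration in the other, each incurring an $O(h^4)$ error. The only difference is that you spell out the iterated-quadrature error analysis and the $C^4$ smoothness bookkeeping for $\phi$ and $\Phi$, which the paper leaves implicit by citing the standard reference.
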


\begin{proof}
Consider $\| \Gamma_{\gamma} - \hat \Gamma_{\gamma} \|_H^2 = \langle \Gamma_\gamma, \Gamma_\gamma \rangle_H + \langle \hat\Gamma_\gamma, \hat\Gamma_\gamma \rangle_H - 2 \langle \Gamma_\gamma, \hat\Gamma_\gamma \rangle_H$. The term $\langle \hat\Gamma_\gamma, \hat\Gamma_\gamma \rangle_H$ is an implementation of the two-dimensional Simpson's rule (cf. \cite{burden2001numerical}) while $\langle \Gamma_\gamma, \Gamma_\gamma \rangle_H$ is the double integral $\int_0^T \int_0^T K(\gamma(t),\gamma(\tau)) dt d\tau.$ Thus, \[\langle \hat\Gamma_\gamma, \hat\Gamma_\gamma \rangle_H = \langle \Gamma_\gamma, \Gamma_\gamma \rangle_H + O({\color{black}h^4}).\]
Similarly, $\langle \Gamma_{\gamma},\hat \Gamma_{\gamma}\rangle_H$ integrates in one variable while implementing Simpson's rule in the other. Consequently, \[\langle \Gamma_{\gamma},\hat \Gamma_{\gamma}\rangle_H = \langle \Gamma_{\gamma}, \Gamma_{\gamma}\rangle_H + O({\color{black}h^4}).\]
The conclusion of the theorem follows. 
\end{proof}

As convergence properties of occupation kernels in connection with convergence properties of the trajectories they represent are of interest in this manuscript, additional propositions have been included in the appendix which address homotopic parameterizations of curves and their respective occupation kernels.

\section{Inner Products on Symbols of Densely Defined Liouville Operators}\label{sec:testfree}

{\color{black}This section presents a method for parameter identification that builds on the Hilbert space and operator theoretic framework presented in Section \ref{sec:occupation}. In particular, the development of this section uses the adjoint relation between Liouville operators and occupation kernels to establish an inner product on the collection of symbols for densely defined Liouville operators, $\mathcal{F}$, over a RKHS. 
This section begins with a pre-inner product arising from a single trajectory for the dynamical system, and then develops two different inner products based on this pre-inner product.

Let $\gamma:[0,T] \to \mathbb{R}^n$ satisfying $\dot \gamma = f(\gamma)$. As the identity $A_f^* \Gamma_{\gamma} = \sum_{m=1}^M \theta_m A_{Y_m}^* \Gamma_\gamma$ holds the following quadratic form arises as
\begin{gather}\label{eq:occupation-direct}
0 = \left\| A_f^* \Gamma_{\gamma} - \sum_{m=1}^M \theta_m A_{Y_m}^* \Gamma_{\gamma}\right\|^2_H \\
\nonumber
= \left\| A_f^* \Gamma_{\gamma}\right\|^2_H - 2 \sum_{m=1}^M \theta_{m} \langle A_f^* \Gamma_{\gamma}, A_{Y_m}^* \Gamma_{\gamma} \rangle_H + \sum_{m,m'=1}^M \theta_{m}\theta_{m'} \langle A_{Y_m}^* \Gamma_{\gamma}, A_{Y_{m'}}^* \Gamma_{\gamma} \rangle_H.
\end{gather}
The challenge in leveraging \eqref{eq:occupation-direct} to generate constraints on $\theta$ for system identification lies in the ability to compute the various elements of \eqref{eq:occupation-direct}. For example the first two terms can be computed without appealing to a particular kernel space or determining the adjoint operator $A_{Y_m}^*$ as 
\begin{gather*}\| A_f^* \Gamma_{\gamma} \|^2_H = \langle K(\cdot, \gamma(T)) - K(\cdot, \gamma(0)), K(\cdot, \gamma(T)) - K(\cdot, \gamma(0)) \rangle_H\\
= K(\gamma(T),\gamma(T)) - 2 K(\gamma(T),\gamma(0)) + K(\gamma(0),\gamma(0)), \text{ and }\\
\langle A_{f}^* \Gamma_\gamma, A_{Y_m}^* \Gamma_{\gamma} \rangle_H = \int_0^T A_{Y_m} ( K(\cdot,\gamma(T)) - K(\cdot, \gamma(0)))\mid_{\gamma(t)} dt,
\end{gather*}
where the notation $(\cdot)\mid_{\gamma(t)}$ stands for evaluation at $\gamma(t)$. 

Closed form expressions of $A_{Y_m}^*$ are not expected to be easily determined for most choices of $Y_m$ and RKHS (e.g. \cite{russorosenfeldarxiv}). Hence, computation of the third term in \eqref{eq:occupation-direct} relies on the action of $A_{Y_m}^*$ on an occupation kernel (which is easier to determine, and was given in Theorem \ref{thm:adjoint-on-occupation-kernel}) as
\begin{equation}\label{eq:innerproduct_testfree}
    \langle A_{Y_m}^* \Gamma_{\gamma}, A_{Y_{m'}}^* \Gamma_{\gamma} \rangle_H =
    \int_0^T \int_{0}^T \nabla_1 \left( \nabla_{2} K(\gamma(t) , \gamma(\tau))Y_{m}(\gamma(\tau)) \right) Y_{m'}(\gamma(t)) d\tau dt.
\end{equation}

Hence, parameter identification can be performed using only the occupation kernel and the Liouville operators by setting the gradient of \eqref{eq:occupation-direct} equal to zero. As the norm squared in \eqref{eq:occupation-direct} is zero at the true parameters of the system, which is the smallest value the norm can take, this must be the minimum of the quadratic equation in \eqref{eq:occupation-direct}, and hence the true parameters must also set the gradient equal to zero. The parameters $\theta_1, \ldots, \theta_M$ must then satisfy
\begin{equation}\label{eq:testfree_gram}
    \begin{pmatrix}
        \langle Y_1, Y_{1} \rangle_{\mathcal{F},\gamma} & \cdots & \langle Y_1, Y_{M} \rangle_{\mathcal{F},\gamma}\\
        \vdots & \ddots & \vdots\\
        \langle Y_M, Y_{1} \rangle_{\mathcal{F},\gamma} & \cdots & \langle Y_M, Y_{M} \rangle_{\mathcal{F},\gamma}
    \end{pmatrix}
    \begin{pmatrix}
        \theta_1\\
        \vdots\\
        \theta_M
    \end{pmatrix}
    =
    \begin{pmatrix}
        \langle {f}, Y_{1} \rangle_{\mathcal{F},\gamma}\\
        \vdots\\
        \langle {f}, Y_{M} \rangle_{\mathcal{F},\gamma}
    \end{pmatrix},
\end{equation}
where for a collection, $\mathcal{F}$, of symbols of densely defined Liouville operators a bilinear form is given as $\langle f, g \rangle_{\mathcal{F},\gamma} := \langle A_g^* \Gamma_\gamma,A_f^* \Gamma_{\gamma}\rangle_{H}$, which gives a pre-inner product on the space of dynamical systems giving rise to densely defined Liouville operators over $H$. Note that, in contrast with the SINDy method found in \cite{brunton2016discovering}, \eqref{eq:testfree_gram} yields a derivative free approach for the system identification problem. Here, the only derivatives to be performed are those that can be computed symbolically, which are easily computed for many kernel functions. Moreover, the above formulation only leverages a single trajectory of the system, and the size of the Gram matrix corresponding to \eqref{eq:occupation-direct}, namely $G := \left( \langle Y_m, Y_{m'} \rangle_{\mathcal{F},\gamma} \right)_{m,m'=1}^M$, is governed only by the number of basis functions. The matrix, $G$, is positive semidefinite. In particular, the resolution of \eqref{eq:testfree_gram} gives the projection of an arbitrary $f \in \mathcal{F}$ onto the span of $\{ Y_{1}, \cdots, Y_{M}\}$ with respect to the pre-inner product $\langle \cdot, \cdot, \rangle_{\mathcal{F},\gamma}$. For completeness, analytical evaluation of the pre-inner products expressed in \eqref{eq:innerproduct_testfree} is presented in Appendix \ref{sec:ExplicitInnerproducts} for several kernels.

\subsection{Computational Challenges and Feature Space Representations}

The resolution of the weights in the above setting expresses the projection of $f$ onto the span of $\{ Y_{1}, \ldots, Y_{M} \}$ with respect to the pre-inner product space given by $\langle \cdot, \cdot \rangle_{\mathcal{F},\gamma}$. However, the conditioning of the Gram matrix, $( \langle Y_{i}, Y_{j} \rangle_{\mathcal{F},\gamma} )_{i,j=1}^M$, is frequently poor. One approach to robustify the result is to recognize that the weights are unchanged when the inner product is adjusted between two different trajectories satisfying the same dynamics. The respective linear systems may then be augmented, and a left psuedo-inverse may be employed to determine the parameters for the system.

However, as is typical of numerical methods and matrix computations (e.g. \cite{fasshauermccourt,williams2014kernel}), a more reliable result may be extracted via a simplification obtained through a factorization of the Gram matrices. If the kernel $K$ is associated with the feature map $\Psi(x) = (\Psi_1(x),\Psi_2(x),\ldots)^T \in \ell^2(\mathbb{N})$ as $K(x,y) = \sum_{s=1}^\infty \Psi_s(x) \Psi_s(y)$, then $A_{Y_i}^* \Gamma_\gamma$ and $A_{f}^* \Gamma_{\gamma}$ may be written as 
\begin{gather}
A_{Y_i}^* \Gamma_\gamma = \sum_{s=1}^\infty \Psi_s(x) \int_0^T \nabla \Psi_s(\gamma(t)) Y_i(\gamma(t)) dt\text{, and}  \label{eq:featurespace_Yi}\\
A_{f}^* \Gamma_{\gamma} = \sum_{s=1}^\infty \Psi_s(x) \left(\Psi_s(\gamma(T)) - \Psi_s(\gamma(0))\right). \label{eq:featurespace_f}
\end{gather}
Hence, the Gram matrix on the left hand side of \eqref{eq:testfree_gram} may be expressed as
$V_\gamma^T V_\gamma$ with
\begin{equation}\label{eq:factored_gram}
    V_\gamma := \begin{pmatrix}
        \int_0^T \nabla \Psi_1(\gamma(t)) Y_1(\gamma(t)) dt &  \cdots & \int_0^T \nabla \Psi_1(\gamma(t)) Y_M(\gamma(t)) dt\\
        \int_0^T \nabla \Psi_2(\gamma(t)) Y_1(\gamma(t)) dt & \cdots & \int_0^T \nabla \Psi_2(\gamma(t)) Y_M(\gamma(t)) dt\\
        \int_0^T \nabla \Psi_3(\gamma(t)) Y_1(\gamma(t)) dt & \cdots & \int_0^T \nabla \Psi_3(\gamma(t)) Y_M(\gamma(t)) dt\\
        \vdots & \vdots & \vdots
    \end{pmatrix}.
\end{equation}
The right hand side of \eqref{eq:testfree_gram} is expressible as
\begin{equation}\label{eq:factored_eval}
\begin{pmatrix}
\langle f, Y_1 \rangle_{\mathcal{F},\gamma}\\
\langle f, Y_2 \rangle_{\mathcal{F},\gamma}\\
\vdots\\
\langle f, Y_M \rangle_{\mathcal{F},\gamma}
\end{pmatrix}
=
V_\gamma^T (\Psi(\gamma(T)) - \Psi(\gamma(0)))
=
V_\gamma^T
\begin{pmatrix} 
\Psi_1(\gamma(T)) - \Psi_1(\gamma(0))\\
\Psi_2(\gamma(T)) - \Psi_2(\gamma(0))\\
\Psi_3(\gamma(T)) - \Psi_3(\gamma(0))\\
\vdots
\end{pmatrix}.
\end{equation}
Since both \eqref{eq:factored_eval} and \eqref{eq:factored_gram} have the infinite matrix $V_\gamma^T$ on the left hand side, the resolution of the system
\begin{equation}
    V_\gamma \theta = \Psi(\gamma(T)) - \Psi(\gamma(0))\label{eq:infiniteDimensionalConstraints}
\end{equation}
also satisfies \eqref{eq:testfree_gram}.

As $\Psi$ is infinite dimensional for most kernel functions, $V_\gamma$ is an infinite dimensional matrix. For a given kernel function, such as the Gaussian RBF kernel, one option to obtain a finite-dimensional representation is to leverage decaying factors for the feature space representation and set a cutoff after the size of the features fall under a pre-specified precision, as was done in \cite{fasshauermccourt} for scattered data interpolation. Conversely, given a finite collection of real-valued functions $\{ g_1, \ldots, g_S \}$ over a set $X$, a kernel function, $K(x,y) = \sum_{s=1}^S g_s(x) g_s(y)$, may be constructed yielding a matrix $V$ of finite dimensions. In the sequel, a collection of test functions will be employed for the resolution of the system identification problem given in \eqref{eq:parameterization}. These test functions can be any collection of continuously differentiable functions, provided that the collection is either finite or constitutes the members of a feature map to $\ell^2(\mathbb{N})$. Each collection of test functions gives rise to a kernel function and in turn, give a different inner product on the collection of densely defined Liouville operators over the native space of that kernel function.

It should also be noted that if $f$ is known to be explicitly representable as a linear combination of a finite number of $Y_i$'s, then the matrix $V_{\gamma}$ only needs to be evaluated up until its rank matches the number of basis functions, $M$. At that point, $\theta$ is completely determined.

\subsection{Inner Products from Pre-Inner Products}

Pre-inner products give rise to pseudonorms on vector spaces. It can be seen that when $\nabla_1 \nabla_2 (K(x,y) + K(y,x))$ is positive definite and bounded below for all $x,y \in \gamma [0,T]$, as it is for \eqref{eq:pre-inner-exp-dot}, \eqref{eq:pre-inner-gauss-rbf}, and \eqref{eq:pre-inner-poly}, then the pseudonorm induced by the pre-inner product can be seen to dominate the $L^2$ norm on $\gamma[0,T]$. To give rise to densely defined Liouville operators, the dynamics in the cases of the native RKHSs of the Gaussian RBF and the exponential dot product kernels must be real analytic functions of several variables. As a result, agreement in the pseudonorm implies agreement in the $L^2$ norm over a trajectory. However, since functions, say $\eta(x)$, that vanish identically on the trajectory cannot be observed through this pseudonorm, there remains some ambiguity, where $f(x)$ and $f(x) + \eta(x)$ are different functions over $\mathbb{R}^n$ but are indistinguishable according to the induced pseudonorm. For example, $\eta(x) := x_1^2 + x_2^2 - 1$ is identically zero on the unit circle in $\mathbb{R}^2$.

It should be noted that in this setting, if only the values along the trajectory were desired, then the pre-inner product is a proper inner product. However, since it is desirable to extend the data outside of the trajectory to all of $\mathbb{R}^n$, adjustments are necessary to achieve an inner product of this form.

\subsubsection{Quotient Approach to Inner Products}

In the case where only one trajectory is available for the dynamical system, there is a limited number of options available. First, if $f$ is known to be explicitly a linear combination of the basis functions, then the weights may be determined through the factorization of the Gram matrix above. However, to make the pre-inner product a proper inner product, an appeal may be made to the quotient space, $\mathcal{F}/\mathcal{N}$, where $\mathcal{N} := \{ \eta : \mathbb{R}^n \to \mathbb{R}^n | \eta \in \mathcal{F} \text{ and } \eta\left( \gamma[0,T] \right) = \{ 0 \} \}.$ The space $\mathcal{F}/\mathcal{N}$ is an inner product space consisting of equivalence classes of functions from $\mathcal{F}$, where two members of $\mathcal{F}$ are equivalent if their difference is in $\mathcal{N}$. This is a typical construction of an inner product space from a pre-inner product space. Details can be found in standard references, such as \cite{pedersen2012analysis}.

Computations of the inner product in this form are precisely the same as those for the pre-inner product, where a member of each equivalence class is selected to represent each of $f$ and $Y_i$ for $i=1,\ldots,M$. This will motivate the algorithm for parameter identification in Section \ref{sec:systemid}.

\subsubsection{Integration Approach to Inner Products}

When a large collection of trajectories is available from a dynamical system, this collection of trajectories can be used to generate an inner product from the collection of corresponding pre-inner products. In particular, if an $(n-1)$ dimensional sub-manifold of initial points for a collection of observed trajectories is given, where there is at least one point on the submanifold, where the dynamics is nonzero, then this collection of trajectories can be leveraged to give an inner product on the collection of dynamical systems giving rise to densely defined Liouville operators.

\begin{theorem}\label{thm:inner-product}
 Let $f : \mathbb{R}^n \to \mathbb{R}^n$ be a dynamical system that gives rise to a densely defined Liouville operator over a universal RKHS of real analytic functions on $\mathbb{R}^n$, such that the operator valued kernel, $\nabla_1 \nabla_2 K(x,y)$ is universal. Let $X \subset \mathbb{R}^n$ be a compact smooth Riemann submanifold of $\mathbb{R}^n$. Suppose that $\Omega := \{ \gamma_{\xi} : [0,T] \to \mathbb{R}^n \}_{\xi \in X}$ is a collection of trajectories corresponding to $f$, such that $\gamma_{\xi}(0) = \xi \in X$, and $\Omega$ has a nonempty interior. Define the bilinear form on $\mathcal{F}$, $\langle \cdot, \cdot \rangle_{\mathcal{F},\Omega}$, as 
\[ \langle p, q \rangle_{\mathcal{F},\Omega} = \int_{X} \langle p, q \rangle_{\mathcal{F},\gamma_{\xi}} d\xi.\]
This bilinear form is an inner product.
\end{theorem}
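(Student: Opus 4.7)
The plan is to verify the three defining properties of an inner product—bilinearity, symmetry, and positive definiteness. The first two are immediate: for fixed $\xi$ the symbol map $p \mapsto A_p^* \Gamma_{\gamma_\xi}$ is linear, so $\langle p,q\rangle_{\mathcal{F},\gamma_\xi} = \langle A_p^* \Gamma_{\gamma_\xi}, A_q^* \Gamma_{\gamma_\xi}\rangle_H$ is bilinear and symmetric in $(p,q)$, and these properties survive integration against the Riemannian volume on $X$. Non-negativity is equally immediate, since $\langle p,p\rangle_{\mathcal{F},\Omega} = \int_X \|A_p^* \Gamma_{\gamma_\xi}\|_H^2\, d\xi \geq 0$. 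All the real content is to show that $\langle p,p\rangle_{\mathcal{F},\Omega} = 0$ forces $p \equiv 0$ on $\mathbb{R}^n$.

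The first step would reduce the definiteness question to a pointwise statement along every trajectory. The integrand $\xi \mapsto \|A_p^* \Gamma_{\gamma_\xi}\|_H^2$ is continuous in $\xi$: continuous dependence of $\gamma_\xi$ on initial conditions (valid since $f$ is real analytic, hence locally Lipschitz, by Theorem \ref{thm:real-analytic-symbol}) together with joint continuity of $K$, $\nabla_1 \nabla_2 K$, and $p$ makes the double-integral expression obtained from Theorem \ref{thm:adjoint-on-occupation-kernel} continuous in $\xi$. Because the Riemannian measure on the smooth compact submanifold $X$ has full support, a continuous non-negative function integrating to zero over $X$ must vanish on all of $X$. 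Thus $A_p^* \Gamma_{\gamma_\xi} = 0$ in $H$ for every $\xi \in X$, and since functions in an RKHS are determined by their pointwise values, the formula from Theorem \ref{thm:adjoint-on-occupation-kernel} yields
\[\int_0^T \nabla_2 K(x, \gamma_\xi(t)) \cdot p(\gamma_\xi(t))\, dt = 0 \quad \text{for every } x \in \mathbb{R}^n \text{ and } \xi \in X.\]

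The main obstacle is extracting pointwise vanishing of $p \circ \gamma_\xi$ from this family of identities, and this is where I would invoke the universality of the operator-valued kernel $\nabla_1 \nabla_2 K$. Taking finite linear combinations in the free variable $x$ shows that $\int_0^T \nabla g(\gamma_\xi(t)) \cdot p(\gamma_\xi(t))\, dt = 0$ for every $g$ of the form $g = \sum_i c_i K(x_i,\cdot)$, i.e., on a dense subspace of $H$. Universality of the gradient kernel says exactly that the resulting vector fields $\nabla g$ are dense in $C(Y;\mathbb{R}^n)$ for every compact $Y \subset \mathbb{R}^n$. Choosing $Y$ to be a compact neighborhood of $\gamma_\xi([0,T])$ and approximating any continuous extension of $p$ uniformly on $Y$, one passes to the limit to conclude
\[\int_0^T \|p(\gamma_\xi(t))\|_2^2\, dt = 0,\]
and hence $p(\gamma_\xi(t)) = 0$ for every $t \in [0,T]$ and every $\xi \in X$.

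Finally, the nonempty interior hypothesis on $\Omega$ guarantees that $\bigcup_{\xi \in X}\gamma_\xi([0,T])$ contains a nonempty open set $U \subset \mathbb{R}^n$ on which $p$ vanishes. By Theorem \ref{thm:real-analytic-symbol} each component of $p$ is real analytic on $\mathbb{R}^n$, and a real analytic function that vanishes on a nonempty open set is identically zero. Therefore $p \equiv 0$ on $\mathbb{R}^n$, establishing definiteness. The only delicate step in the plan is the universality argument: one must be careful that the density of $\nabla g$'s in $C(Y;\mathbb{R}^n)$ interacts correctly with integration along the fixed trajectory, but uniform convergence on a compact set containing the trajectory image makes this routine.
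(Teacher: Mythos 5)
Your proposal is correct and follows essentially the same route as the paper: reduce positive definiteness to the vanishing of $A_p^* \Gamma_{\gamma_\xi}$, use universality of the operator-valued kernel $\nabla_1\nabla_2 K$ to conclude $\int_0^T \Vert p(\gamma_\xi(t))\Vert_2^2\,dt = 0$, and finish with real analyticity of the symbol (Theorem \ref{thm:real-analytic-symbol}) plus the nonempty-interior hypothesis. The only execution differences are cosmetic: the paper expands $A_p^*\Gamma_{\gamma_\xi}$ in an orthonormal feature basis and kills each coefficient by linear independence before invoking universality of the features, whereas you work directly with gradients of kernel sections (requiring one extra, routine approximation layer), and you upgrade ``almost every $\xi$'' to ``every $\xi$'' via continuity in $\xi$ where the paper simply carries the almost-everywhere statement to the end.
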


\begin{proof}
Linearity and the semidefinite property of the bilinear form follows directly from the same properties of $\langle p, q \rangle_{\mathcal{F},\gamma_{\xi}}$ for each $\xi \in X$.

Let $\Psi = (\Psi_1,\Psi_2, \ldots)$ be a feature map for the kernel function $K$, where each $\{ \Psi_m \}_{m=1}^\infty$ forms an orthonormal basis for the RKHS. Note that the matrix $\mathbb{K}(x,y) := \nabla_1 \nabla_2 K(x,y) = \sum_{m=1}^\infty \nabla \Psi_m^T(x) \nabla \Psi_m(x)$ is an operator valued kernel for a RKHS \cite{caponnetto2008universal}. The universality of $\mathbb{K}$ yields the universality of its features \cite{caponnetto2008universal}.

Let $Y, \tilde Y \in \mathcal{F}$, and suppose that $\int_X \langle A_{Y - \tilde Y}^* \Gamma_{\gamma_\xi}, A_{Y - \tilde Y}^* \Gamma_{\gamma_\xi}\rangle d \xi= 0$. Hence, $A_{Y-\tilde Y}^* \Gamma_{\gamma_xi} = \sum_{m=1}^\infty \Psi_m(x) \int_0^T \nabla \Psi_m(\gamma_\xi(t))(Y(\gamma_\xi(t)) - \tilde Y(\gamma_\xi(t)) dt = 0$ for almost all $\xi \in X$. By the linear independence of the features, $\int_0^T \nabla \Psi_m(\gamma_\xi(t))(Y(\gamma_\xi(t)) - \tilde Y(\gamma_\xi(t)) dt = 0$ for all $m$. By the universality of $\Psi_m$, for any $\epsilon > 0$ there is a linear combination of the $\Psi_m$ that uniformly approximates $Y-\tilde Y$ over $\gamma_\xi[0,T]$ within $\epsilon$. Hence, $\int_0^T \| Y(\gamma_\xi(t)) - \tilde Y(\gamma_\xi(t) \|_2^2 dt = 0$. As $Y$ and $\tilde Y$ are both continuous functions, $Y(x) = \tilde Y(x)$ for all $x \in \gamma_\xi[0,T]$. As this holds for almost all $\xi \in X$, $Y(x) = \tilde Y(x)$ for all $x \in \Omega$ by continuity.

Since, $Y$ and $\tilde Y$ are vector valued real analytic functions over $\mathbb{R}^n$ by Theorem \ref{thm:real-analytic-symbol} and $\Omega$ has a nonempty interior, it follows that $Y = \tilde Y$ as functions over $\mathbb{R}^n$.
\end{proof}


}

\section{Parameter Identification via Occupation Kernels\label{sec:sysIDMatrix}}
{\color{black}To utilize the integral formulation of the occupation kernel determined in Section \ref{sec:occupation-estimation}, the collection of test functions leveraged in the sequel will be a finite collection of kernel functions. The method below may be seen as a weak formulation in the sense of Hilbert space inner products, where instead of determining the parameters directly from $A_f = \sum_{m=1}^M \theta_m A_{Y_m}$, the problem is resolved on test functions through an integral. Selection of kernel functions as test function allows for the analysis of various error sources using the developed framework. It should be noted that the errors may also be analyzed using different methods specific to other possible selections of test functions.} 

For a compact set $X \subset \mathbb{R}^n$, let $\{ \gamma_{j}: [0,T] \to X \}_{j=1}^N$ be a collection of trajectories satisfying the dynamics $\dot x = f(x) = \sum_{i=1}^M \theta_i Y_i(x),$ and let $\Gamma_{\gamma_j}$ be the corresponding occupation kernels inside a RKHS, $\tilde H$ of continuously differentiable functions over $X$. Suppose that $\{c_s\}_{s=1}^\infty \subset X$ is dense. {\color{black}The feature map generated by the set of test functions $ \{K(\cdot,c_s\}_{s=1}^S \} $ is $\Psi(x) = ( \tilde K(x,c_1), \ldots, \tilde K(x,c_S))^T$. The feature map yields a finite-dimensional representation of the constraints in \eqref{eq:infiniteDimensionalConstraints} as}
\begin{gather}\label{eq:constraints}
    \langle A_f \tilde K(\cdot, c_s), \Gamma_{\gamma_j} \rangle_H =\\
    \sum_{i=1}^M \theta_i \langle A_{Y_i} K(\cdot, c_s), \Gamma_{\gamma_j} \rangle_H = \tilde K(\gamma_j(T),c_s) - \tilde K(\gamma_j(0),c_s), \nonumber
\end{gather}
for each $s = 1,\ldots,\infty$ and $j = 1,\ldots, N$, which can be expressed in a matrix notation as 
\begin{gather}\label{eq:matrix-constraints}
\bold A \bold \theta = \bold K(T) - \bold K(0), \text{ where}\\
\nonumber
\bold A = \begin{pmatrix} V_{\gamma_1} \\ \vdots \\ V_{\gamma_{N}} \end{pmatrix} \in \mathbb{R}^{SN\times M}
,\\
\nonumber
\bold\theta =
\begin{pmatrix}
\theta_1 & \cdots & \theta_{M}
\end{pmatrix}^T \in \mathbb{R}^M, \text{ and}\\
\nonumber
\bold K(t) = \begin{pmatrix} \Psi(\gamma_1(t))\\ \vdots \\ \Psi(\gamma_M(t))\end{pmatrix}
\in \mathbb{R}^{SN}.
\end{gather}

Under the additional assumption of continuous differentiability of both the kernel functions and the trajectories $\{\gamma_j\}_{j=1}^M$, it can be observed through the Cauchy-Schwarz inequality that
\begin{gather*}
    |\langle A_{Y_i} \tilde K(\cdot,c_s), \hat \Gamma_{\gamma_j} \rangle_H - \langle A_{Y_i} \tilde K(\cdot,c_s), \Gamma_{\gamma_j} \rangle_H|\\
    \le \| A_{Y_i} \tilde K(\cdot,c_s) \|_H \|\hat \Gamma_{\gamma_j} - \Gamma_{\gamma_j} \|_H.
\end{gather*}
Hence, by Proposition \ref{prop:quadrature-convergence}
\begin{equation}\label{eq:sqrth}\langle A_{Y_i} \tilde K(\cdot, c_s), \hat \Gamma_{\gamma_j} \rangle_H = \langle A_{Y_i} \tilde K(\cdot, c_s), \Gamma_{\gamma_j} \rangle_H + O\left(\sqrt{h}\right),\end{equation} so that quadrature techniques can be successfully employed for estimation of the inner products contained in \eqref{eq:matrix-constraints}.

Since the matrix $\bold A$ must be numerically estimated, written as $\hat{\bold A}$, the parameter values obtained using this method are approximate, and will be represented as $\hat \theta$, obtained via \[ \hat{\theta} := (\hat{\bold A}^T \hat{\bold A})^{-1} \hat{\bold A}^T (\bold K(T) - \bold K(0)). \]
{\color{black}The complete algorithm for the system identification method is given in Algorithm \ref{alg:system-id}. Note that in the case of $N = 1$, $A^TA = V_{\gamma_1}^T V_{\gamma_1}$ is the Gram matrix given in Section \ref{sec:testfree}, with respect to the kernel function $K(x,y) = \sum_{s=1}^S \tilde K(x,c_s)\tilde K(y,c_s)$. 

\begin{algorithm}
\caption{Pseudocode for the system identification routine of Section \ref{sec:systemid}. In the description some quantities are left in their analytic form, such as the integral of line $7$. The choice of quadrature routine can have a significant impact on the overall results, and it is advised that a high accuracy method is leveraged in practice.}
\label{alg:system-id}
\begin{algorithmic}[1]
\STATE{Input: Trajectories $\{ \gamma_{j} :[0,T] \to \mathbb{R}^n \}_{j=1}^M$, Centers $\{c_s\}_{s=1}^S$, and basis $\{ Y_i\}_{i=1}^N$}
\STATE{Initialize the empty matrix $\bold A$ and empty vector $\bold b$}
\FOR{j'=1:M}
    \STATE{Initialize $S \times N$ matrix $\bold A_{j'}$ and the length $S$ vector $\bold b_{j'}$}
    \FOR{s'=1 to S}
        \FOR{i'=1 to N}
            \STATE{Assign the value of the integral $\int_0^T \nabla \tilde K(\gamma_{j'}(t),c_{s'}) Y_i(\gamma_{j'}(t))dt$ to the $(s',i')$ entry of $\bold A_{j'}$.}
            \STATE{Assign the value $\tilde K(\gamma_{j'}(T),c_{s'}) - \tilde K(\gamma_{j'}(0),c_{s'})$ to the $s'$ entry of $\bold b_{j'}$.}
        \ENDFOR
    \ENDFOR
    \STATE{Append $\bold A_{j'}$ and $\bold b_{j'}$ to $\bold A$ and $\bold b$ respectively.}
\ENDFOR
\RETURN{$\theta$ as $(\bold A^T \bold A)^{-1} \bold A^T \bold b$.}
\end{algorithmic}
\end{algorithm}

{\color{black}
\subsection{A note on the independence of the algorithm}

As was noted above, the constraints for the parameter identification routine can be established independent of the framework of Section \ref{sec:testfree}. However, the utilization of the Hilbert space framework for error estimates requires one additional assumption on the basis functions $Y_i$.

\begin{assumption}\label{ass:common-domain}
Given a RKHS, $H$, over a set $X$, each of the operators, $A_{Y_i}: \mathcal{D}(A_{Y_i}) \to H$ are densely defined. Moreover, $\cap_{i=1}^M \mathcal{D}(A_{Y_i})$ is dense in $H$. That is, the operators $A_{Y_1},\ldots, A_{Y_M}$ have a common dense domain.
\end{assumption}


Assumption \ref{ass:common-domain} ensures the validity of decomposing $A_f$ into a linear combination of densely defined Liouville operators, $\{ A_{Y_i}\}_{i=1}^M$. Liouville operators are closely connected to densely defined multiplication operators (c.f. \cite{rosenfeld2015densely,rosenfeld2015introducing,rosenfeld2016sarason,sarason2008unbounded}), and the unavailability of complete classifications of densely defined multiplication operators over many RKHSs indicates that characterizing the necessary and sufficient conditions that a dynamical system must meet to allow a Liouville operator to be densely defined may be an intractable problem in many cases. However, sufficient conditions can certainly be established. In particular, Assumption \ref{ass:common-domain} is borne out through examination of the exponential dot product kernel, where a polynomial function $f$ may be decomposed into linear combinations of polynomials, each of which has a corresponding Liouville operator containing polynomials inside of its domain. More sophisticated examples of decompositions can be expressed and treated individually.

For other collections of test functions, where methods outside of the Hilbert space framework can be employed for error analysis, Assumption \ref{ass:common-domain} is not necessary.
}
}

\section{Impact of Signal Noise on Samples}\label{sec:noise}

{\color{black}In this section we consider a differentiable test function, $g : \mathbb{R}^n \to \mathbb{R}^n$ that comprises one of the features of the kernels mentioned in \ref{sec:testfree}. This test function could come from an extant kernel function, such as the Gaussian RBF kernel, or it can be part of a collection leveraged to define a new kernel for the purpose of system identification. As the features in the implementation described in \ref{sec:sysIDMatrix} are derived from kernel functions themselves, this provides an opportunity to demonstrate analysis techniques within a RKHS using occupation kernels. Many of the error bounds developed here can also be developed using bounds on the gradients of the test functions over compact sets, but will instead be demonstrated using kernel techniques. The kernels used as features will be written as $\tilde K$ and the occupation kernels will be written as $\tilde \Gamma_{\gamma}$ to distinguish them from the kernel defining the inner product on the dynamical systems. In this case, the relationship between the two kernels is given via $K(x,y) = \sum_{i=1}^M \tilde K(x,c_i) \tilde K(x,c_j)$.}

\subsection{Measurement Noise}

An immediate advantage evident in the usage of occupation kernel methods for system identification over that of methods employing numerical derivative estimates is a certain robustness to noise, which is demonstrated in Figure \ref{fig:noisy}. Indeed, signal noise added to a signal requires sophisticated filtering techniques to allow for reasonable numerical derivative estimates \cite{brunton2016discovering}. On the other hand, normally distributed white noise has a smaller effect on integration based methods, since peaks in the noise are infinitesimally small and carry less weight through the integration process.

In the context of occupation kernel based methods, a sample for the system identification method takes the form
\[ \langle A_{Y_l} \tilde K(\cdot,c_i), \tilde \Gamma_{\gamma_j}\rangle_H = \int_0^T \nabla \tilde K(\gamma_{j}(t),c_i) Y_{l}(\gamma_{j}(t)) dt \] as in \eqref{eq:matrix-constraints}. Let $\epsilon:[0,T]\to \mathbb{R}^n$ be a disturbance term acting as signal noise, then the noise corrupted sample may be expressed as 
\[ \langle A_{Y_l} \tilde K(\cdot,c_i), \tilde \Gamma_{\gamma_j+\epsilon}\rangle_H = \int_0^T \nabla \tilde K(\gamma_{j}(t)+\epsilon(t),c_i) Y_{l}(\gamma_{j}(t)+\epsilon(t)) dt. \] The following theorem provides a bound on the difference between the corrupted and uncorrupted signals.

\begin{theorem}\label{thm:error}
Suppose that $H$ is a RKHS consisting of twice continuously differentiable functions and $Y_l$ is continuously differentiable for each $l$, then the error introduced by a bounded zero mean disturbance\footnote{$L^2([0,T],\mathbb{R}^n)$ denotes the Lebesgue space of functions $g:[0,T] \to \mathbb{R}^n$ such that $\int_0^T \| g(t) \|_2^2 dt < \infty$.}, $\epsilon \in L^2([0,T],\mathbb{R}^n)$, is $O(T\cdot \sigma(\epsilon))$ where $\sigma(\epsilon)$ is the standard deviation of $\epsilon$ with respect to the uniform probability distribution over $[0,T]$.
\end{theorem}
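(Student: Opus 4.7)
The plan is to isolate the occupation-kernel perturbation via Cauchy--Schwarz in $H$ and then estimate it kernel-theoretically using the integral representation from Proposition \ref{prop:integral-rep}, with the $C^2$ hypothesis on $\tilde K$ entering precisely to control a second-order mixed difference. Writing the error as
\begin{equation*}
E := \langle A_{Y_l}\tilde K(\cdot,c_i),\,\tilde\Gamma_{\gamma_j+\epsilon}-\tilde\Gamma_{\gamma_j}\rangle_H,
\end{equation*}
Cauchy--Schwarz gives $|E|\le \|A_{Y_l}\tilde K(\cdot,c_i)\|_H\,\|\tilde\Gamma_{\gamma_j+\epsilon}-\tilde\Gamma_{\gamma_j}\|_H$. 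The first factor is a finite constant depending only on $\tilde K$, $c_i$, and $Y_l$, so the problem reduces to bounding the second factor.

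Using Proposition \ref{prop:integral-rep} together with the reproducing property of $\tilde K$, the squared Hilbert space norm expands as
\begin{align*}
\|\tilde\Gamma_{\gamma_j+\epsilon}-\tilde\Gamma_{\gamma_j}\|_H^2 &= \int_0^T\!\!\int_0^T\Big[\tilde K(\gamma_j(s)+\epsilon(s),\gamma_j(t)+\epsilon(t))-\tilde K(\gamma_j(s),\gamma_j(t)+\epsilon(t))\\
&\quad -\tilde K(\gamma_j(s)+\epsilon(s),\gamma_j(t))+\tilde K(\gamma_j(s),\gamma_j(t))\Big]\,ds\,dt.
\end{align*}
Since $\epsilon$ is bounded and $\gamma_j([0,T])$ is compact, all four arguments lie in a fixed compact set $U\subset\mathbb{R}^n$ on which $\nabla_1\nabla_2\tilde K$ is bounded by some constant $M$; this is where the hypothesis $\tilde K\in C^2$ is used. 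Applying the mean value theorem once in each slot then bounds the bracketed integrand pointwise by $M\|\epsilon(s)\|\|\epsilon(t)\|$.

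Integrating the pointwise bound yields $\|\tilde\Gamma_{\gamma_j+\epsilon}-\tilde\Gamma_{\gamma_j}\|_H^2\le M\bigl(\int_0^T\|\epsilon(t)\|\,dt\bigr)^2$, so $\|\tilde\Gamma_{\gamma_j+\epsilon}-\tilde\Gamma_{\gamma_j}\|_H\le \sqrt{M}\int_0^T\|\epsilon(t)\|\,dt$. A final application of Cauchy--Schwarz together with the zero-mean hypothesis gives
\begin{equation*}
\int_0^T\|\epsilon(t)\|\,dt\le \sqrt{T}\,\|\epsilon\|_{L^2}=\sqrt{T}\cdot\sqrt{T}\,\sigma(\epsilon)=T\sigma(\epsilon),
\end{equation*}
where I use that $\|\epsilon\|_{L^2}^2=T\sigma(\epsilon)^2$ when $\int_0^T\epsilon(t)\,dt=0$. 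Combining the estimates produces $|E|=O(T\sigma(\epsilon))$, with the implicit constant depending only on $\tilde K$, $c_i$, $Y_l$, and the compact set $U$.

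The main obstacle is the second-order mixed difference estimate: it is essential that the bracketed quantity scales like $\|\epsilon(s)\|\|\epsilon(t)\|$ rather than $\|\epsilon(s)\|+\|\epsilon(t)\|$, since the latter would only yield $\|\tilde\Gamma_{\gamma_j+\epsilon}-\tilde\Gamma_{\gamma_j}\|_H=O\bigl(T\sqrt{\sigma(\epsilon)}\bigr)$ after taking the square root. The sharper product bound requires $\nabla_1\nabla_2\tilde K$ to exist and be uniformly bounded on $U$, which is exactly the content of $\tilde K\in C^2$. Once this estimate is in hand, the remaining reduction to $T\sigma(\epsilon)$ is a routine combination of Cauchy--Schwarz and the zero-mean identity for $\|\epsilon\|_{L^2}$.
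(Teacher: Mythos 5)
Your proof is correct, but it takes a genuinely different route from the paper's. The paper never separates the test function from the perturbation: it writes the error directly as the single time integral \eqref{eq:corruptionequation} of $\nabla \tilde K(\cdot,c_i)Y_l(\cdot)$ evaluated along the perturbed versus unperturbed trajectory, applies the mean value inequality to that composite map (this is where $\tilde K \in C^2$ and $Y_l \in C^1$ enter, to make the composite $C^1$ with derivative bounded by some $\tilde S$ on a compact neighborhood), and then finishes with the same Cauchy--Schwarz-in-time step $\int_0^T\|\epsilon(t)\|_2\,dt \le T\sigma(\epsilon)$ that you use. You instead apply Cauchy--Schwarz in $H$ first and reduce everything to the RKHS-norm perturbation $\|\tilde\Gamma_{\gamma_j+\epsilon}-\tilde\Gamma_{\gamma_j}\|_H$, which you control through the second-order mixed difference of the kernel via Proposition \ref{prop:integral-rep}; your observation that a first-order bound $\|\epsilon(s)\|+\|\epsilon(t)\|$ on the integrand would only give $O\bigl(T\sqrt{\sigma(\epsilon)}\bigr)$ after the square root is exactly the delicate point, and your product bound $M\|\epsilon(s)\|\,\|\epsilon(t)\|$ (using boundedness of $\nabla_1\nabla_2\tilde K$ on a compact set, which the paper also relies on elsewhere) correctly resolves it. What your route buys is a reusable intermediate estimate $\|\tilde\Gamma_{\gamma_j+\epsilon}-\tilde\Gamma_{\gamma_j}\|_H \le \sqrt{M}\,T\sigma(\epsilon)$ that is uniform over all test functions $g\in H$ with constant $\|g\|_H$ --- precisely the style of bound the paper invokes later for process noise in \eqref{eq:occupation-distance-process} --- at the price of a more delicate kernel-level estimate; the paper's direct argument is simpler but its constant is tied to the particular composite $\nabla\tilde K(\cdot,c_i)Y_l(\cdot)$. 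The only implicit assumption in your Cauchy--Schwarz step, namely $A_{Y_l}\tilde K(\cdot,c_i)\in H$, is already presupposed by the statement of the sample as an inner product, so it is not a gap.
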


\begin{proof}
Consider,
\begin{gather}
    \left| \langle A_{Y_l} \tilde K(\cdot,c_i), \tilde \Gamma_{\gamma_j}\rangle_H - \langle A_{Y_l} \tilde K(\cdot,c_i), \tilde \Gamma_{\gamma_j+\epsilon}\rangle_H\right| \nonumber\\
    \label{eq:corruptionequation}
    =\left|\int_0^T \nabla \tilde K(\gamma_{j}(t),c_i) Y_{l}(\gamma_{j}(t)) - \nabla \tilde K(\gamma_{j}(t)+\epsilon(t),c_i) Y_{l}(\gamma_{j}(t)+\epsilon(t)) dt.\right|.
\end{gather}

By the hypothesis, $\nabla \tilde K(\cdot,c_i) Y_l(\cdot)$ is continuously differentiable. Hence, if given $R>0$, $B_R(0)$ is a ball containing the image of $\gamma_j$ and $\tilde R > 0$ is a bound on the disturbance, $\epsilon$, then $\overline{B_{R+\tilde R}(0)}$ is a compact region containing the image of $\gamma_{j}+\epsilon$. Let $\tilde S$ be an upper bound on the derivative of $\nabla \tilde K(\cdot,c_i) Y_l(\cdot)$ over $\overline{B_{R+\tilde R}(0)}$. Thus, by the mean value inequality, \eqref{eq:corruptionequation} may be bounded as
\begin{gather*}
    \le  \tilde S \int_0^T \|\gamma_{j}(t) + \epsilon(t) - \gamma_{j}(t)\|_2 dt
    \le \tilde S \sqrt{\int_0^T 1 dt} \sqrt{ \int_0^T \| \epsilon(t) \|_2^2 dt}\\
    \le \tilde S \sqrt{T} \sqrt{ \frac{T}{T}\int_0^T \| \epsilon(t) \|_2^2 dt}
    \le \tilde S T \sqrt{ \frac{1}{T} \int_0^T \| \epsilon(t) \|_2^2 dt} = \tilde S T \sigma(\epsilon).
\end{gather*}
Hence, the big-oh estimate is established.
\end{proof}

Note that the above theorem may be modified to accommodate a possibly unbounded disturbance in $L^2([0,T],\mathbb{R}^n)$ when $Y_l$ and $\nabla \tilde  K(\cdot,c_i)$ have bounded derivatives and Jacobians respectively.

\begin{figure}
    \centering
    \includegraphics[scale=0.2]{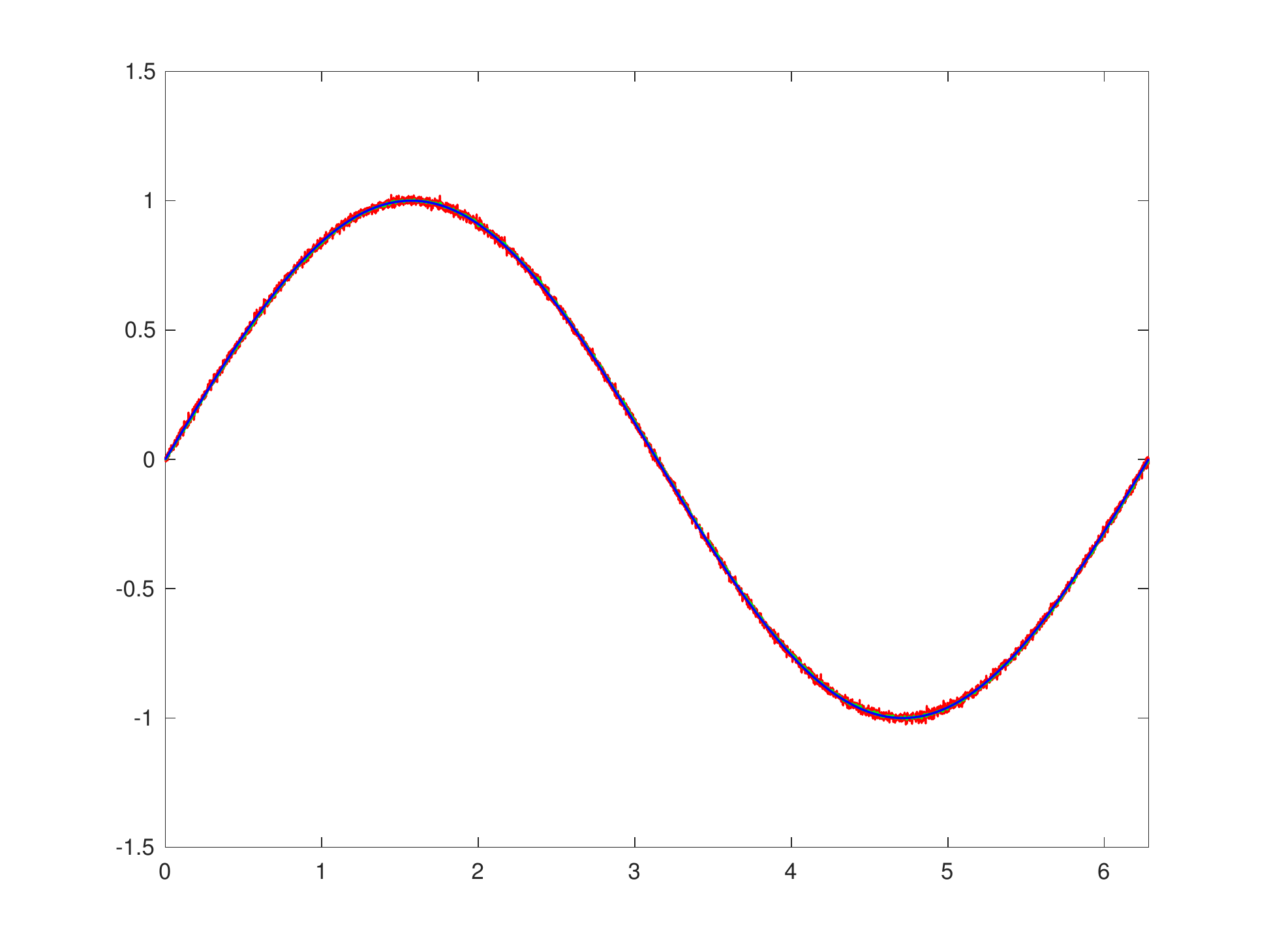}%
    \includegraphics[scale=0.2]{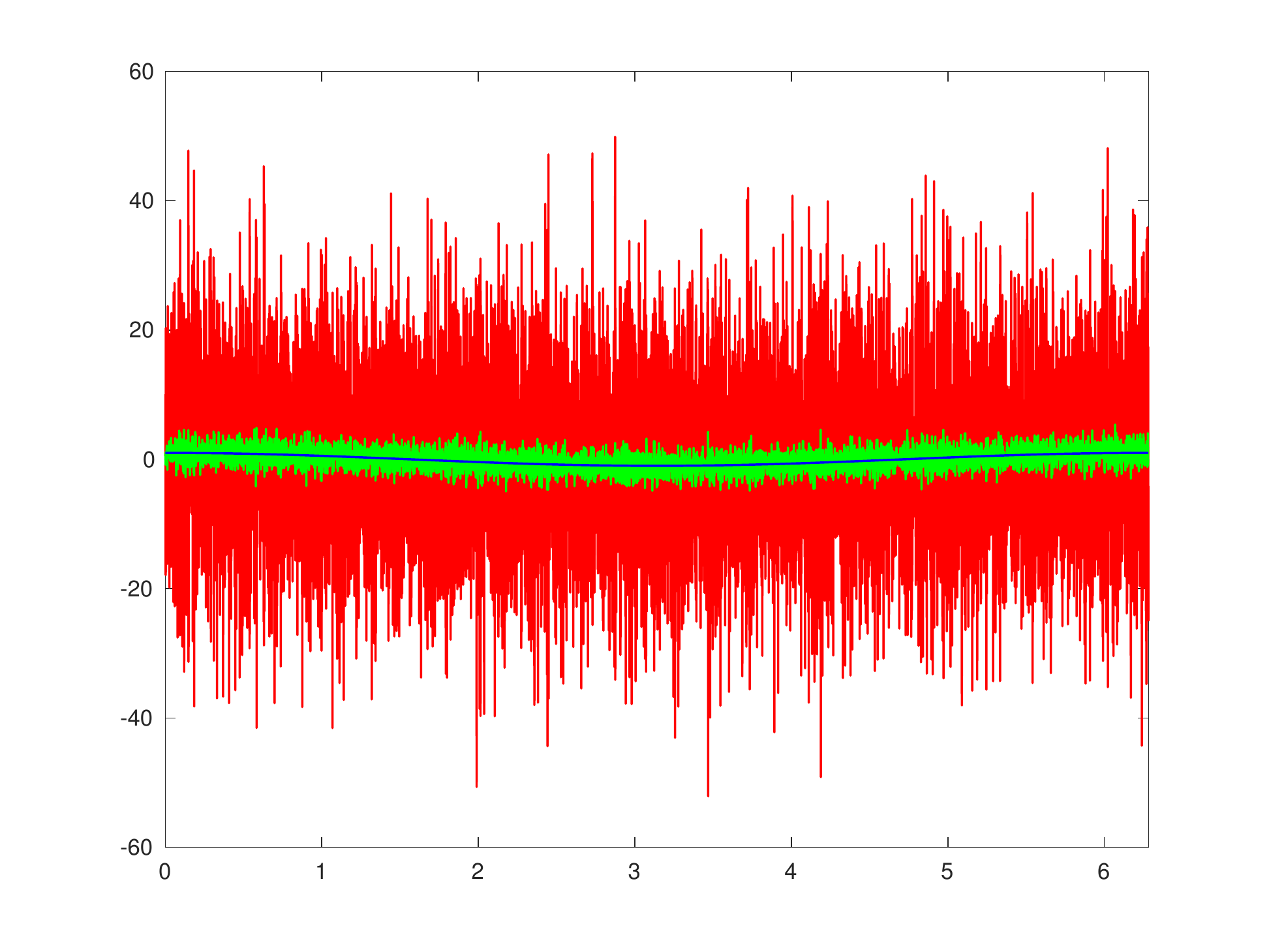}%
    \includegraphics[scale=0.2]{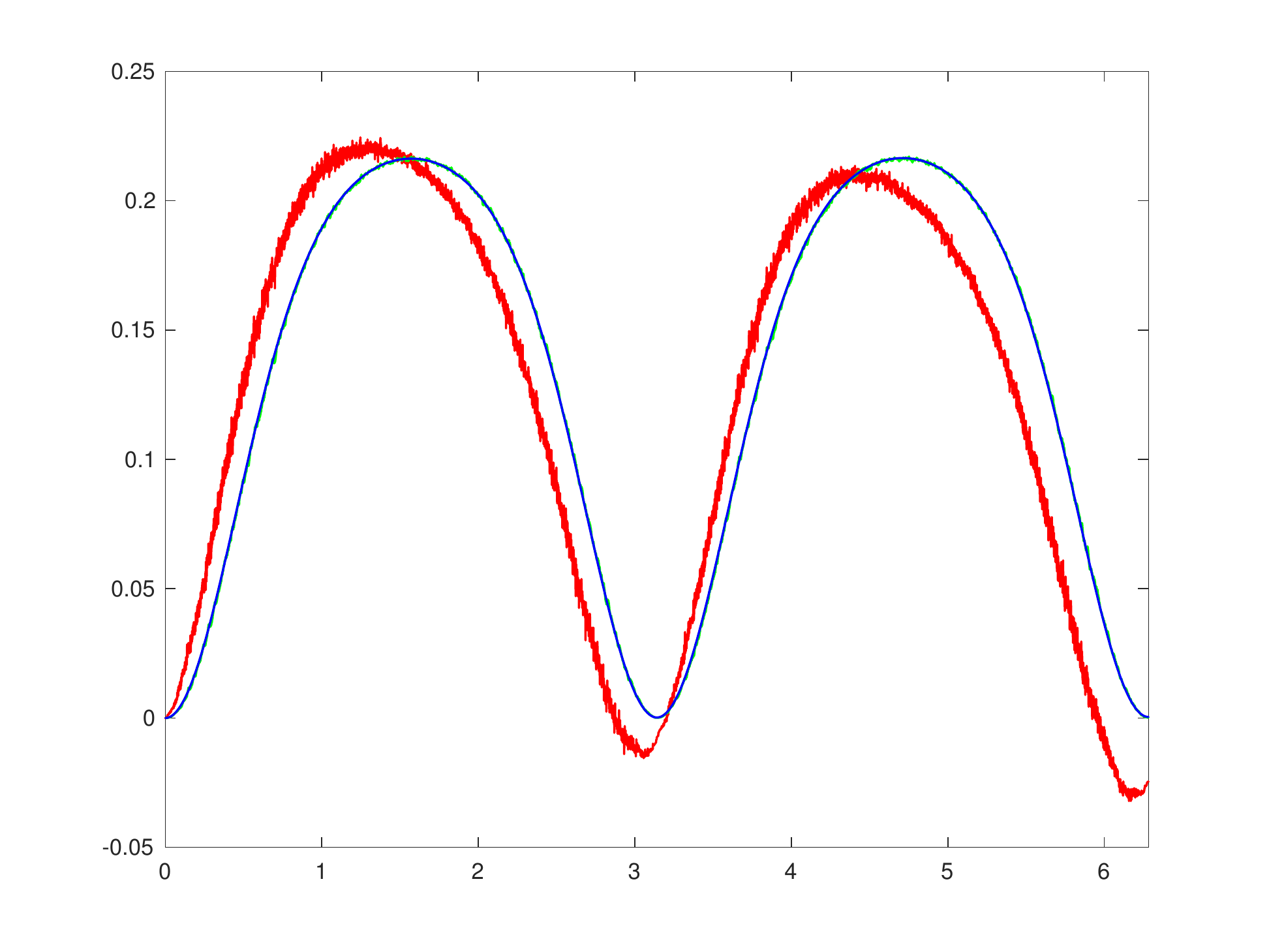}
    \caption{\label{fig:noisy}This figure contrasts samples of a dynamical system from a single trajectory according to numerical derivatives versus samples according to occupation kernels. The trajectory, $\sin(t)$ over $[0,2\pi]$, is shown in the left figure (blue) along with a noise corrupted trajectory (red) and a 10 point moving average filter of the noise corrupted trajectory trajectory (green). The disturbance, $\epsilon$, is normally distributed white noise with mean zero and standard deviation $0.01$.  The center figure shows numerical derivatives obtained from each trajectory, and the right figure shows samples obtained using occupation kernels.  The kernel function used for the right figure is the Gaussian RBF with kernel width $0.5$, and in place of a basis function the numerical derivative estimate of $\dot y$ is used for a worst case example.  The center figure demonstrates a very large error in the estimation of the derivative while using numerical techniques on a noise corrupted trajectory even after the application of a moving average filter. The right figure demonstrates a much smaller error, and the red trajectory validates the $O(T\cdot \sigma(\epsilon))$ estimate of Theorem \ref{thm:error}. This figure demonstrates the occupation kernel samples' robustness to noise, where even when an unfiltered noisy signal is used, there is a very small error in the sample.}
    \label{fig:my_label}
\end{figure}

\subsection{Process Noise}
The above discussion concerns measurement or signal noise for the system. Another common source of noise is process noise, where a state dependent disturbance, $\eta :\mathbb{R}^n \to \mathbb{R}^n$, impacts the dynamics directly as $\dot x = f(x) + \eta(x).$ In this case, there is an impact on the occupation kernel as well as the state variable and Liouville operator. Supposing that $z:[0,T] \to \mathbb{R}^n$ is the unperturbed state trajectory satisfying the same initial condition, $z(0) = x(0)$, then 
\begin{equation}\label{eq:process-state-ineq}|z(t) - x(t)| \le e^{Lt} \int_0^t e^{-L\tau} \| \eta(x(\tau)) \|_2 d \tau.\end{equation}

Leveraging the argument in Theorem \ref{thm:error}, the norm difference between the respective occupation kernels for $x(\cdot)$ and $z(\cdot)$ is bounded as
\begin{equation}\label{eq:occupation-distance-process}
\| \tilde \Gamma_{x(\cdot)} - \tilde \Gamma_{z(\cdot)}\|_H^2 \le \tilde S^2 T \int_0^T e^{2Lt} \left(\int_0^t e^{-L\tau} \| \eta(x(\tau)) \|_2 d\tau\right)^2 dt,
\end{equation}
and also note that\footnote{For a matrix $A$, the notation $\|A\|_F$ denotes the Frobenius norm of $A$.} $\| A_\eta \tilde K(\cdot,c) \|_H^2 \le \|\nabla_1 \nabla_2 K(c,c)\|_{F} \| \eta(c) \|_2^2$ and $\| A_f \tilde K(\cdot,c) \|_H^2 \le \|\nabla_1 \nabla_2 \tilde K(c,c)\|_F \| f(c) \|_2^2$.

Assuming that $g \in \mathcal{D}(A_{f+\eta}) \cap \mathcal{D}(A_{\eta}) \cap \mathcal{D}(A_{f})$, then \begin{gather*} |\langle A_{f+\eta} g, \Gamma_{x(\cdot)}\rangle_H - \langle A_{f} g, \Gamma_{z(\cdot)} \rangle_H |\\ \le |\langle A_{f+\eta} g, \Gamma_{x(\cdot)}\rangle_H - \langle A_{f} g, \Gamma_{x(\cdot)}\rangle|_H + |\langle A_{f} g, \Gamma_{x(\cdot)}\rangle_H - \langle A_{f} g, \Gamma_{z(\cdot)} \rangle_H |\\
    \le \| A_\eta g \|_H \| \Gamma_{x(\cdot)} \|_H + \| A_f g \|_H \| \Gamma_{x(\cdot)} - \Gamma_{z(\cdot)}\|_H,
    \end{gather*}
and taking $g = \tilde K(\cdot,c)$ for some $c \in \mathbb{R}^n$ the following inequality holds
\begin{gather*} |\langle A_{f+\eta} \tilde K(\cdot,c), \tilde \Gamma_{x(\cdot)}\rangle_H - \langle A_{f} \tilde K(\cdot,c), \tilde \Gamma_{z(\cdot)} \rangle_H | \le \sqrt{\|\nabla_1 \nabla_2 \tilde K(c,c)\|_F \| \eta(c) \|_2^2} \| \tilde \Gamma_{x(\cdot)} \|_H\\
    + \sqrt{\|\nabla_1 \nabla_2 \tilde K(c,c)\|_F \| f(c) \|_2^2} \sqrt{\tilde S^2 T \int_0^T e^{2Lt} \left(\int_0^t e^{-L\tau} \| \eta(x(\tau)) \|_2 d\tau\right)^2 dt}.
    \end{gather*}
Thus, the following theorem has been demonstrated,
\begin{theorem}\label{thm:process-error}
If the system is subjected to a process noise, $\eta$, that is bounded over a compact set containing the trajectories, $\Omega$, which yields a densely defined Liouville operator, then error induced in the samples may be bounded as
\begin{gather*}\sqrt{\|\nabla_1 \nabla_2 \tilde K(c,c)\|_F \| \eta(c) \|_2^2} \| \tilde \Gamma_{x(\cdot)} \|_H\\
    + \sqrt{\|\nabla_1 \nabla_2 \tilde K(c,c)\|_F \| f(c) \|_2^2} \sqrt{\tilde S^2 T \int_0^T e^{2Lt} \left(\int_0^t e^{-L\tau} \| \eta(x(\tau)) \|_2 d\tau\right)^2 dt},  \end{gather*} which is $O(\sup_{x \in \Omega}\|\eta(x)\|_2).$
\end{theorem}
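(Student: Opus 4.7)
The theorem is essentially a collation of the displayed inequalities appearing immediately before its statement, together with the order-of-magnitude conclusion. My plan is therefore to organize those building blocks into a coherent argument and then extract the $O(\sup_{x\in\Omega}\|\eta(x)\|_2)$ estimate at the end.

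First I would establish \eqref{eq:process-state-ineq}, the pointwise deviation of the perturbed trajectory $x$ from the unperturbed trajectory $z$ sharing the same initial condition. This is a standard Gr\"onwall-type argument: writing $x(t)-z(t) = \int_0^t \bigl(f(x(\tau))-f(z(\tau))\bigr)d\tau + \int_0^t \eta(x(\tau))\,d\tau$, using the local Lipschitz constant $L$ for $f$ on a compact set containing both trajectories (which exists since $\eta$ is bounded on $\Omega$, keeping $x$ in a slightly enlarged compact set), and applying Gr\"onwall's inequality in the integral form yields exactly the stated bound.

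Next I would port this state-space bound into the Hilbert space, obtaining the occupation-kernel norm bound \eqref{eq:occupation-distance-process}. The mechanism is identical to the one used in the proof of Theorem \ref{thm:error}: from the integral representation of Proposition \ref{prop:integral-rep}, write $\tilde\Gamma_{x(\cdot)}(y)-\tilde\Gamma_{z(\cdot)}(y) = \int_0^T \bigl(\tilde K(y,x(t))-\tilde K(y,z(t))\bigr)dt$, apply the mean value inequality on the compact region in which both trajectories live to pick up a Lipschitz constant $\tilde S$ for $\tilde K$ (with respect to its second argument), then substitute \eqref{eq:process-state-ineq} and use Cauchy–Schwarz to land on the claimed bound. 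In parallel I would record the two auxiliary estimates $\|A_\eta \tilde K(\cdot,c)\|_H^2 \le \|\nabla_1\nabla_2 \tilde K(c,c)\|_F \|\eta(c)\|_2^2$ and $\|A_f \tilde K(\cdot,c)\|_H^2 \le \|\nabla_1\nabla_2 \tilde K(c,c)\|_F \|f(c)\|_2^2$, which follow directly from Theorem \ref{thm:adjoint-on-occupation-kernel} applied to the single-term Liouville symbols, giving $A_\eta \tilde K(\cdot,c) = \nabla_2 \tilde K(\cdot,c)\eta(c)$ and similarly for $f$, so that their $H$-norms are evaluated via the reproducing property at $c$.

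With these ingredients, the core estimate is a two-term splitting via the triangle inequality, $|\langle A_{f+\eta}\tilde K(\cdot,c),\tilde\Gamma_{x(\cdot)}\rangle_H - \langle A_f\tilde K(\cdot,c),\tilde\Gamma_{z(\cdot)}\rangle_H| \le \|A_\eta \tilde K(\cdot,c)\|_H\|\tilde\Gamma_{x(\cdot)}\|_H + \|A_f\tilde K(\cdot,c)\|_H\|\tilde\Gamma_{x(\cdot)}-\tilde\Gamma_{z(\cdot)}\|_H$, bounding each factor by the preceding estimates. This produces verbatim the displayed bound in the theorem.

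The remaining task is the $O$-statement. Let $E := \sup_{x\in\Omega}\|\eta(x)\|_2$. The first summand has $\|\eta(c)\|_2 \le E$, so it is $O(E)$ (the kernel gradient norm and $\|\tilde\Gamma_{x(\cdot)}\|_H$ are bounded constants independent of $\eta$, the latter via Proposition \ref{prop:integral-rep} and compactness). For the second summand, the outer factors $\sqrt{\|\nabla_1\nabla_2\tilde K(c,c)\|_F \|f(c)\|_2^2}$ and $\tilde S\sqrt{T}$ are constants; inside the remaining square root the integrand is bounded by $e^{2Lt}\bigl(E\int_0^t e^{-L\tau}d\tau\bigr)^2 \le C\,E^2$ uniformly in $t\in[0,T]$, so the integral is $O(E^2)$ and its square root is $O(E)$. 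I expect this last accounting to be the most delicate part only because one must be careful that all implicit constants (Lipschitz constants, kernel-gradient norms, and $\|\tilde\Gamma_{x(\cdot)}\|_H$) remain uniformly controlled as $\eta$ varies, which is guaranteed by restricting $\eta$ to keep trajectories inside the fixed compact set $\Omega$. Summing the two $O(E)$ contributions yields the conclusion.
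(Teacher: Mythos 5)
Your proposal follows essentially the same route as the paper, whose ``proof'' is precisely the chain of displayed estimates preceding the theorem statement: the Gr\"onwall-type state bound \eqref{eq:process-state-ineq}, the occupation-kernel perturbation bound \eqref{eq:occupation-distance-process} obtained by the argument of Theorem \ref{thm:error}, the two kernel-norm bounds, and the triangle-inequality/Cauchy--Schwarz splitting. Your additional bookkeeping for the $O(\sup_{x\in\Omega}\|\eta(x)\|_2)$ conclusion is consistent with (and slightly more explicit than) what the paper records.
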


The error induced by the process noise is ultimately larger than that induced by measurement noise. However, process noise with small bounds leads to small errors in the samples.

\section{Incorporating Streaming Data}\label{sec:streaming}

The principle observation of this section is that the matrix, $\bold A$, given in \eqref{eq:matrix-constraints} changes continuously with respect to the time variable, $T$. That is, if $\gamma:[0,T] \to \mathbb{R}^n$ is a continuous trajectory and $\tilde \Gamma_{\gamma,\tau} := \int_0^\tau \tilde K(\cdot,\gamma(t))dt$, then the matrix
\begin{equation}\label{eq:time-varying-A}\bold A(\tau) := V_{\left.\gamma\right|_{[0,\tau]}}\end{equation}
is continuous with respect to $\tau$. This continuity may be demonstrated directly from the integral representations of the inner products contained within $\bold A(\tau)$. However, it is useful to document the following lemma.

\begin{lemma}\label{lem:time-continuity}
Suppose that $H$ is a RKHS of continuous functions over a compact set $X \subset \mathbb{R}^n$, and let $\gamma:[0,T] \to X$ be a continuous trajectory. The mapping $t \mapsto \Gamma_{\gamma,t}$ is continuous in the Hilbert space norm of $H$.
\end{lemma}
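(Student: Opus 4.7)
The plan is to leverage the integral representation of occupation kernels from Proposition \ref{prop:integral-rep}, which gives $\Gamma_{\gamma,t}(x) = \int_0^t K(x,\gamma(s))\,ds$, and to show that the mapping $t \mapsto \Gamma_{\gamma,t}$ is in fact Lipschitz continuous in the $H$-norm, which is strictly stronger than continuity.

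First I would fix $0 \le t_1 \le t_2 \le T$ and observe that the increment $\Gamma_{\gamma,t_2} - \Gamma_{\gamma,t_1}$ is itself an occupation kernel, namely that of $\gamma$ restricted to $[t_1,t_2]$, so that for every $g \in H$ the identity $\langle g, \Gamma_{\gamma,t_2} - \Gamma_{\gamma,t_1}\rangle_H = \int_{t_1}^{t_2} g(\gamma(s))\,ds$ holds directly from Definition \ref{def:occ}. Applying this identity with $g = \Gamma_{\gamma,t_2} - \Gamma_{\gamma,t_1}$ itself, and then using the reproducing property a second time to rewrite $\Gamma_{\gamma,t_2}(\gamma(s)) - \Gamma_{\gamma,t_1}(\gamma(s))$ as $\int_{t_1}^{t_2} K(\gamma(s),\gamma(\tau))\,d\tau$, yields the double integral representation
\begin{equation*}
\|\Gamma_{\gamma,t_2} - \Gamma_{\gamma,t_1}\|_H^2 = \int_{t_1}^{t_2}\int_{t_1}^{t_2} K(\gamma(s),\gamma(\tau))\,d\tau\,ds.
\end{equation*}

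Next, since $X$ is compact and $K$ is continuous on $X\times X$, the quantity $M := \sup_{x,y \in X} |K(x,y)|$ is finite, and since $\gamma$ takes values in $X$ the integrand is bounded by $M$. This immediately yields the bound $\|\Gamma_{\gamma,t_2} - \Gamma_{\gamma,t_1}\|_H^2 \le M\,(t_2 - t_1)^2$, so that $\|\Gamma_{\gamma,t_2} - \Gamma_{\gamma,t_1}\|_H \le \sqrt{M}\,|t_2 - t_1|$, establishing Lipschitz (and hence uniform) continuity of $t \mapsto \Gamma_{\gamma,t}$ in the $H$-norm.

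There is essentially no obstacle here: the key move is simply to recognize the difference of two occupation kernels as an occupation kernel over the subinterval, which makes the reproducing-property calculation transparent. The only thing to be a little careful about is the order of the arguments in $K(\gamma(s),\gamma(\tau))$ versus $K(\gamma(\tau),\gamma(s))$, but this is immaterial since $K$ is symmetric. The compactness of $X$ (already assumed in the statement) ensures the uniform bound on $K\circ(\gamma\times\gamma)$ without any additional smoothness hypotheses on $\gamma$ or $K$ beyond those given.
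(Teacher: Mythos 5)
Your proof is correct, and it takes a cleaner route than the paper's while proving something slightly stronger. The paper expands $\|\Gamma_{\gamma,t_2}-\Gamma_{\gamma,t_1}\|_H^2$ into the three inner products $\langle\Gamma_{\gamma,t_2},\Gamma_{\gamma,t_2}\rangle_H$, $\langle\Gamma_{\gamma,t_1},\Gamma_{\gamma,t_1}\rangle_H$, and $\langle\Gamma_{\gamma,t_1},\Gamma_{\gamma,t_2}\rangle_H$, and bounds the pairwise differences by $t_2(t_2-t_1)\sup_{x,y\in X}|K(x,y)|$, which yields continuity (in effect a H\"older-$\tfrac12$ bound on the norm of the increment). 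You instead observe that $\Gamma_{\gamma,t_2}-\Gamma_{\gamma,t_1}$ is the occupation kernel of $\gamma$ restricted to $[t_1,t_2]$, which collapses the three double integrals into the single exact identity $\|\Gamma_{\gamma,t_2}-\Gamma_{\gamma,t_1}\|_H^2=\int_{t_1}^{t_2}\int_{t_1}^{t_2}K(\gamma(s),\gamma(\tau))\,d\tau\,ds$ and hence gives the Lipschitz bound $\|\Gamma_{\gamma,t_2}-\Gamma_{\gamma,t_1}\|_H\le\sqrt{M}\,|t_2-t_1|$; the two computations are algebraically equivalent, but your combination is sharper and would, for instance, feed directly into the Lipschitz hypothesis of the gradient-chase theorem in Section \ref{sec:streaming}. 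One small remark: you justify $M=\sup_{x,y\in X}|K(x,y)|<\infty$ by asserting joint continuity of $K$ on $X\times X$, which is not literally among the hypotheses (only that $H$ consists of continuous functions); the paper makes the same implicit assumption, and finiteness of $M$ can in any case be recovered from $|K(x,y)|\le\sqrt{K(x,x)K(y,y)}$ together with the uniform boundedness principle applied to the evaluation functionals $\{E_x\}_{x\in X}$, so this is a presentational quibble rather than a gap.
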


\begin{proof}
Let $t_1 < t_2$ and consider \[\| \Gamma_{\gamma,t_2} - \Gamma_{\gamma,t_1} \|_H^2 = \langle \Gamma_{\gamma,t_2}, \Gamma_{\gamma,t_2} \rangle_H + \langle \Gamma_{\gamma,t_1},\Gamma_{\gamma,t_1}\rangle_H - 2 \langle \Gamma_{\gamma,t_1},\Gamma_{\gamma,t_2}\rangle_H.\]

Now compare the inner products, $\langle \Gamma_{\gamma,t_2}, \Gamma_{\gamma,t_2} \rangle_H$ and $\langle \Gamma_{\gamma,t_1},\Gamma_{\gamma,t_2}\rangle_H$. In particular, observe
\begin{gather*}
    \left|\langle \Gamma_{\gamma,t_2}, \Gamma_{\gamma,t_2} \rangle_H-\langle \Gamma_{\gamma,t_1},\Gamma_{\gamma,t_2}\rangle_H\right|\\
    = \left|\int_0^{t_2}\left( \int_0^{t_2} K(\gamma(t),\gamma(s)) dt -\int_0^{t_1} K(\gamma(t),\gamma(s)) dt  \right)ds\right|\\
    = \left|\int_0^{t_2}\left( \int_{t_1}^{t_2} K(\gamma(t),\gamma(s)) dt \right)ds\right| \le t_2(t_2-t_1) \sup_{x,y \in X} |K(x,y)|.
\end{gather*}
Hence, as $t_1 \to t_2$ the difference $\left|\langle \Gamma_{\gamma,t_2}, \Gamma_{\gamma,t_2} \rangle_H-\langle \Gamma_{\gamma,t_1},\Gamma_{\gamma,t_2}\rangle_H\right| \to 0.$ Similarly, it can be shown that $\left|\langle \Gamma_{\gamma,t_2}, \Gamma_{\gamma,t_2} \rangle_H-\langle \Gamma_{\gamma,t_1},\Gamma_{\gamma,t_1}\rangle_H\right| \to 0$ as well. Thus, continuity is established.
\end{proof}

Writing $\bold A(t_2) - \bold A(t_1) = \begin{pmatrix}
\langle A_{Y_i} \tilde K(\cdot, c_{n_{j,1}}),\tilde \Gamma_{\gamma_{n_{j,2}},t_2}-\tilde \Gamma_{\gamma_{n_{j,2}},t_1} \rangle_H
\end{pmatrix}_{j=1,i=1}^{j=S,i=M},$ the continuity of $\bold A(\tau)$ follows from a term by term application of the Cauchy Schwarz inequality and Lemma \ref{lem:time-continuity}. Hence, for a small change from $t_1$ to $t_2$, there is a correspondingly small change between $\bold A(t_1)$ and $\bold A(t_2)$. Consequently, if $\bold A(t)$ has full column rank, then $\bold A(\tau)^T \bold A(\tau)$ is continuously invertible for $\tau$ in a neighborhood of $t$ \cite[Excercise 4.1.6]{pedersen2012analysis}.

Consequently, as the calculated $\hat \theta$ is an approximation of the true parameter vector $\theta$, the accuracy of which depends on the quality of the data collected in $\hat{\bold A}$, $\hat \theta$ may be viewed as a function of time when updated according to $\bold A(t)$.

\begin{lemma}\label{lem:parameter-continuity}
Suppose that $H$ is a RKHS of continuously differentiable functions, and let $\bold A(t)$ and $\bold K(t)$ be as in \eqref{eq:time-varying-A} and \eqref{eq:constraints} respectively. Let $\hat \theta(t)$ be the solution to $\bold A(t) \hat \theta(t) = \bold K(t)-\bold K(0)$. If $t \in [0,T]$ is such that $\bold A(t)$ has full column rank, then $\hat \theta(\tau)$ is continuous for $\tau$ in a neighborhood of $t$.
\end{lemma}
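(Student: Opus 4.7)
The plan is to deduce the continuity of $\hat\theta(\tau)$ from the explicit formula $\hat\theta(\tau) = (\bold A(\tau)^T \bold A(\tau))^{-1} \bold A(\tau)^T (\bold K(\tau) - \bold K(0))$, using the fact that each piece is continuous and that matrix inversion is a continuous operation on the open set of invertible matrices.

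First I would verify continuity of the two time-varying objects appearing in the formula. For $\bold A(\tau)$, the discussion immediately preceding the lemma already establishes continuity by writing the increment $\bold A(t_2)-\bold A(t_1)$ entrywise as an inner product against $\tilde\Gamma_{\gamma_{n_{j,2}},t_2} - \tilde\Gamma_{\gamma_{n_{j,2}},t_1}$ and applying Cauchy--Schwarz together with Lemma \ref{lem:time-continuity}; this is what I would invoke. For $\bold K(\tau)$, continuity reduces to the continuity of each coordinate $\tilde K(\gamma_j(\tau),c_s)$, which follows from continuity of $\tilde K$ and of each trajectory $\gamma_j$.

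Next I would argue that the invertibility of $\bold A(t)^T \bold A(t)$ propagates to a neighborhood. Since $\bold A(t)$ has full column rank by hypothesis, the $M\times M$ Gram matrix $\bold A(t)^T \bold A(t)$ is positive definite, hence $\det(\bold A(t)^T \bold A(t)) \neq 0$. Because $\tau \mapsto \bold A(\tau)^T \bold A(\tau)$ is continuous (product of continuous matrix-valued maps) and the determinant is a continuous polynomial in the entries, there is a neighborhood $U$ of $t$ on which $\det(\bold A(\tau)^T \bold A(\tau)) \neq 0$. On this neighborhood, $\bold A(\tau)$ retains full column rank and the pseudoinverse $(\bold A(\tau)^T \bold A(\tau))^{-1}\bold A(\tau)^T$ is well defined.

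Finally, matrix inversion is a continuous map on $GL_M(\mathbb{R})$ (as noted in the text, see \cite[Exercise 4.1.6]{pedersen2012analysis}), so $\tau \mapsto (\bold A(\tau)^T \bold A(\tau))^{-1}$ is continuous on $U$. Composing with the continuous maps $\tau \mapsto \bold A(\tau)^T$ and $\tau \mapsto \bold K(\tau)-\bold K(0)$ and using that multiplication of finite-dimensional matrices is continuous yields continuity of $\hat\theta(\tau)$ on $U$. None of the steps are genuinely difficult; the only point requiring minor care is the transfer of continuity from the Hilbert-space-valued object $\tilde\Gamma_{\gamma,\tau}$ to the scalar entries of $\bold A(\tau)$, which is the Cauchy--Schwarz step already carried out in the paragraph preceding the lemma.
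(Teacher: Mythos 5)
Your proposal is correct and follows essentially the same route as the paper's proof, which simply invokes the explicit formula $\hat\theta(t) = (\bold A(t)^T\bold A(t))^{-1}\bold A(t)^T(\bold K(t)-\bold K(0))$ together with the continuity of $\bold A(\cdot)$ and the local invertibility of $\bold A(\cdot)^T\bold A(\cdot)$ established in the discussion preceding the lemma. You merely spell out the details (continuity of $\bold K(\cdot)$, the determinant argument for persistence of full column rank, continuity of inversion) that the paper leaves implicit.
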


\begin{proof}
The continuity of $\hat \theta(t)$ follows from the discussion preceeding Lemma \ref{lem:parameter-continuity} and the observation that $ \hat \theta(t) = (\bold A(t)^T \bold A(t))^{-1} \bold A(t)^T (\bold K(t) - \bold K(0)).$
\end{proof}

The update of the parameter $\hat \theta(\cdot)$ between two time instances, $0 < t_1 < t_2 < T$, may be expressed as follows
\begin{gather*}
    \hat \theta(t_2) = (\bold A(t_2)^T \bold A(t_2))^{-1}\bold A(t_2)^T \left( \bold A(t_1) \hat \theta(t_1) + (\bold K(t_2)-\bold K(t_1) \right)\\
    = (\bold A(t_2)^T \bold A(t_2))^{-1}\bold A(t_2)^T\bold A(t_1) \hat \theta(t_1) + (\bold A(t_2)^T \bold A(t_2))^{-1}\bold A(t_2)^T(\bold K(t_2)-\bold K(t_1)),
\end{gather*}
which is cumbersome to implement numerically. However, the continuity of $\hat \theta(\cdot)$ motivates gradient based update laws.

\begin{theorem}[A Gradient Chase Theorem]
Let $\tau > 0$, and suppose $\bold A(t)$ is full rank for $t > \tau$ and the time varying minimizer of \begin{equation}\label{eq:objectivefunction}F(\theta,t) = \frac12 \theta^T \bold A(t)^T \bold A(t) \theta - \theta^T \bold A(t)^T (\bold K(t) - \bold K(0)),\end{equation} given as $\hat \theta^*(t)$, is Lipshitz continuous. If the gradient descent algorithm is applied at a fixed interval, $h > 0$, then the iterated numerical sequence, $\hat \theta_k$, converges exponentially to a fixed error.
\end{theorem}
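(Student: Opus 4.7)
The plan is to show that the gradient descent iterates $\hat\theta_k$ shadow the moving minimizer $\hat\theta^*(t_k)$, with the error decaying geometrically until it settles at a floor proportional to the update interval $h$. First I would compute the gradient of the time-indexed quadratic explicitly as
\[ \nabla_\theta F(\theta,t) = \bold A(t)^T \bold A(t)\, \theta - \bold A(t)^T (\bold K(t) - \bold K(0)), \]
so that $\hat\theta^*(t)$ is characterized by the normal equation $\bold A(t)^T \bold A(t) \hat\theta^*(t) = \bold A(t)^T (\bold K(t) - \bold K(0))$. Writing $t_k = \tau + kh$ and taking the gradient descent step $\hat\theta_{k+1} = \hat\theta_k - \alpha \nabla_\theta F(\hat\theta_k,t_k)$ with a step size $\alpha>0$ to be chosen, direct substitution yields
\[ \hat\theta_{k+1} - \hat\theta^*(t_k) = \bigl(I - \alpha \bold A(t_k)^T \bold A(t_k)\bigr)\bigl(\hat\theta_k - \hat\theta^*(t_k)\bigr). \]

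Next I would invoke the full-rank hypothesis on $\bold A(t)$ for $t > \tau$ together with the continuity of $t\mapsto \bold A(t)$ established above: on any compact time window the eigenvalues of the symmetric positive definite matrix $\bold A(t)^T \bold A(t)$ admit uniform bounds $0 < \lambda_{\min} \le \lambda_{\max} < \infty$. Choosing $\alpha \in (0, 2/\lambda_{\max})$ (for instance $\alpha = 1/\lambda_{\max}$) produces a uniform contraction factor $\rho := \max(|1 - \alpha \lambda_{\min}|, |1 - \alpha \lambda_{\max}|) \in (0,1)$ with $\| I - \alpha \bold A(t_k)^T \bold A(t_k)\|_2 \le \rho$ for every $k$.

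Combining the iteration identity with the Lipschitz continuity of $\hat\theta^*(\cdot)$ with some constant $L$, the triangle inequality yields the scalar recursion
\[ \| \hat\theta_{k+1} - \hat\theta^*(t_{k+1})\|_2 \le \rho\, \| \hat\theta_k - \hat\theta^*(t_k)\|_2 + L h. \]
This is a first-order linear recurrence of the form $a_{k+1} \le \rho a_k + Lh$, whose solution satisfies $a_k \le \rho^k a_0 + \frac{L h (1 - \rho^k)}{1 - \rho}$. As $k$ grows the first term decays exponentially while the second saturates at the fixed residual $\frac{L h}{1-\rho}$, which is the advertised exponential convergence to a fixed error proportional to the sampling interval $h$.

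The main obstacle I anticipate is promoting the pointwise full-rank hypothesis to a uniform lower bound on $\sigma_{\min}(\bold A(t_k))$: without such uniformity the contraction factor could approach $1$ as $k$ grows and the geometric decay would degrade. Over a bounded horizon this follows automatically from continuity and compactness, but on an unbounded horizon one would expect to need an extra persistence-of-excitation style assumption to lock $\lambda_{\min}$ away from zero. A secondary, mostly bookkeeping, issue is making precise what "gradient descent applied at a fixed interval $h$" means, namely one step per data refresh versus several; either interpretation yields a contraction of the same qualitative form, only altering the value of $\rho$.
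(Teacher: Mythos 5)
Your argument is correct, but it takes a different route from the paper: the paper's entire proof is a one-line appeal to an external result (Theorem 1 of the cited Simonetto et al.\ reference on running gradient methods for time-varying convex optimization), together with the continuity discussion preceding the theorem. What you have done is essentially reconstruct, from scratch, the proof of that cited theorem in the special case of a time-varying quadratic: the exact contraction identity $\hat\theta_{k+1} - \hat\theta^*(t_k) = \bigl(I - \alpha \bold A(t_k)^T \bold A(t_k)\bigr)\bigl(\hat\theta_k - \hat\theta^*(t_k)\bigr)$, the uniform spectral bounds giving $\rho < 1$, and the linear recursion $a_{k+1} \le \rho a_k + Lh$ whose solution exhibits geometric decay to the floor $Lh/(1-\rho)$. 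Your version buys self-containedness and makes explicit exactly which hypotheses are doing the work (uniform positive definiteness of $\bold A(t)^T\bold A(t)$, the step-size window $\alpha \in (0, 2/\lambda_{\max})$, and the Lipschitz drift of the minimizer), whereas the paper's citation buys brevity and inherits whatever generality the external theorem carries (e.g., non-quadratic strongly convex objectives). Your own caveat about promoting pointwise full rank to a uniform lower bound on $\sigma_{\min}(\bold A(t))$ is well placed; since the data live on the compact interval $[0,T]$ and $t \mapsto \bold A(t)$ is continuous by the preceding lemmas, compactness closes that gap here, exactly as you note, and no persistence-of-excitation assumption is needed in this finite-horizon setting.
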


\begin{proof}This follows directly from \cite[Theorem 1]{simonetto2016class} and the above discussion.
\end{proof}

Several variations of the above theorem can be realized with the same conclusions. For example, if the parameters are not constant, but are time varying, then \eqref{eq:objectivefunction} can be adjusted as \begin{equation}\label{eq:objectivefunction2}F(\theta,t) = \frac12 \theta^T \bold B(t)^T \bold B(t) \theta - \theta^T \bold B(t)^T (K(t) - K(t-s)),\end{equation} for some $s > 0$, and $\bold B(t) := \bold A(t) - \bold A(t-s)$. The principle difference in the implementation of \eqref{eq:objectivefunction} and \eqref{eq:objectivefunction2} is that \eqref{eq:objectivefunction} may be progressed in time while managing only one matrix $\bold A(t)$ by adding only the most recent integral segments, while \eqref{eq:objectivefunction2} requires the maintenance of the history of all of the matrix elements.

{\color{black}
\section{A Stable Variation of the SINDy Algorithm}

The constraints established in this manuscript for the determination of the parameters in a parameter identification routine can be readily employed in a sparse learning framework, such as the SINDy algorithm in \cite{brunton2016discovering}. The key difference between \cite{brunton2016discovering} and the present manuscript is that the SINDy algorithm obtains samples of the dynamical system through numerical derivatives of the system state. These numerical derivatives must be followed by extensive filtering to ensure a tractable algorithm. After the essential matrices are created, the parameters for the system are determined through an $\ell^1$ regularization or another sparsity promoting regression scheme to obtain a small number of nonzero parameters. The system may then be reduced to only a small number of basis functions and further refinements to the parameters may be performed on the reduced system. In an identical fashion, sparsity promoting algorithms, such as LASSO, may be employed to give a sparse solution to \eqref{eq:matrix-constraints} to assist with dimensionality reduction. However, the key advantage realized through the present approach is that integration is more robust to sensor noise than numerical differentation as demonstrated in the previous sections.
}

\section{Numerical Experiments}\label{sec:numerical}

Two simulated systems were examined to evaluate the system identification method of Section \ref{sec:systemid}, and one system arising from real world data was also examined. For each simulated system, the trajectories were generated using the Runge-Kutta 4 algorithm with step size $h=0.001$. On each system several different experiments were performed to evaluate the effects of various parameters, such as the kernel width, the selection of kernel, the numerical integration method, and the number of trajectories utilized. For each system, the centers of the kernel were kept constant throughout the experiments. The dynamics in each example are treated as unknown and are parameterized with respect to the collection monomials of degree up to two. Unless otherwise noted, the matrix $\bold A$ in \eqref{eq:matrix-constraints} for each experiment was computed using Simpson's Rule for numerical integration.

\begin{system}\label{sys:one}The first dynamical system is sourced from a collection of benchmark examples for the formal verification community presented in \cite{sogokon2016nonlinear}. The two dimensional dynamics are given as \begin{equation}\label{eq:system1dynamics}
\dot x = f(x) = \begin{pmatrix} 2x_1-x_1x_2\\2x_1^2-x_2\end{pmatrix}.
\end{equation}
Twenty five trajectories were generated for this system over the time interval $[0,1]$ and the initial points were selected from the rectangle $[-0.5,0.5]\times[-2.5,-1.5]$ through a lattice with width $0.25$. The collection of trajectories are presented in Figure \ref{fig:system1}.

The centers for the kernel functions for System \ref{sys:one} were selected from a lattice of width $1$ over $[-3,3]\times[-3,5]$.
\end{system}

\begin{experiment}\label{exp:one}
The first experiment examines the error committed in the parameter estimation by varying the number of trajectories used in the system identification method of Section \ref{sec:systemid}. In this experiment two kernel functions were used; the Gaussian RBFs and the Exponential Dot Product Kernels. The Gaussian RBFs were used with kernel width $\mu = 10$, and the Exponential Dot Product Kernels used parameter $\mu = 1/25$. The results of Experiment \ref{exp:one} may be observed in Figure \ref{fig:varynumber}.
\end{experiment}

\begin{experiment}\label{exp:two}
The second experiment explores the effect of the kernel width, $\mu$, on the parameter estimation when using the Gaussian RBF in the system identification routine on System \ref{sys:one}. The results of Experiment \ref{exp:two} can be observed in Figure \ref{fig:varymu}.
\end{experiment}

\begin{system}\label{sys:lorenz}
The second system is the three dimensional Lorenz system \cite{perko2013differential,brunton2016discovering},
\begin{equation}\label{eq:lorenz}
    \dot x = f(x) = 
    \begin{pmatrix}
    \sigma(x_2-x_1)\\
    x_1(\rho-x_3)-x_2\\
    x_1x_2 - \beta x_3
    \end{pmatrix}.
\end{equation}
Following \cite{brunton2016discovering} a single trajectory was generated over the time interval $[0,100]$ where $\sigma = 10$, $\beta = 8/3$, $\rho = 28$, and the initial condition was given as $x_0=(-8,7,27)^T$. The plot of this trajectory is given in Figure \ref{fig:system2}.

The centers for System \ref{sys:lorenz} were obstained from a lattice with width $10$ within $[-20,20]\times[-50,50]\times[-20,50]$.
\end{system}

\begin{experiment}\label{exp:three}
This experiment investigates the error contribution committed by the use of different numerical integration schemes. In this setting System \ref{sys:lorenz} was identified using the Gaussian RBFs with kernel width $\mu = 10$. The results are displayed in Table \ref{tab:numschemes}.
\end{experiment}

\begin{table}[]
\centering
\begin{tabular}{lcr}
{\ul \textbf{Numerical Method}} & {\ul \textbf{Convergence Order}} &{\ul \textbf{Error $\|\theta-\hat\theta\|_2$}} \\
Right Hand Rule & $O(h)$                & 2.1696e+0                                   \\
Trapezoid Rule &  $O(h^2)$                & 3.8136e-2                                   \\
Simpson's Rule &  $O(h^4)$               & 7.6920e-5                                   
\end{tabular}
\caption{This table presents a comparison between the errors in parameter estimation based on the selection of typical numerical integration schemes for the system identification routine for System \ref{sys:lorenz}. Each numerical integration scheme is listed along with the convergence rate of the algorithm. Of the three routines, the Simpson's rule demonstrates the strongest performance. The step-size was kept consistent between each experiment at $h=0.001$.}
\label{tab:numschemes}
\end{table}

\begin{experiment}\label{exp:four}
This experiment is motivated by the method used to generation of the trajectory data. Runge-Kutta 4 has a high rate of convergence. However, as with any time-stepping method the global error bound is proportional $e^{LT}$ where $L$ is the Lipschitz constant of the dynamics \cite{atkinson2008introduction}. As such, the accumulated global error could be large in the long term evaluation of the trajectory of System \ref{sys:lorenz}. Experiment \ref{exp:four} investigates the effect on the error when the trajectory of System \ref{sys:lorenz} is segmented into smaller trajectories. Each smaller trajectory is then treated as a new initial value problem with a smaller time horizon and thus a hypothetically smaller global error. Here the Gaussian RBF was leveraged in the system identification algorithm of Section \ref{sec:systemid} with kernel width $\mu = 10$.
\end{experiment}

\begin{experiment}\label{exp:five}
This experiment introduces zero mean normally distributed white noise with standard deviation of $0.01$ to System \ref{sys:one} using the same parameters as in Experiment \ref{exp:one}. The system identification method was attempted on the noise corrupted trajectories as well as the corrupted trajectories treated with a $20$ point moving average filter. The results of the parameter estimates obtained for this experiment are shown in Table \ref{tab:experiments}.

{\color{black}The experiment also includes a set of a thousand Monte-Carlo trials that compare the results of the developed occupation kernel system identification method with the more straightforward ILS approach described in \eqref{eq:ILS}. Zero mean Gaussian white noise with a standard deviation of $0.01$ is added to $20$ trajectories of the Lorenz system, segmented from the trajectory in Experiment \ref{exp:three}. Monomials in $3$ variables, up to order $3$, were utilized as basis functions to yield a set of $60$ parameters to be estimated. A set of $180$ Gaussian RBF kernels with $\mu = 20^2/3 $ are used to implement the occupation kernel method. The norm of the error, $\Vert \theta - \hat\theta \Vert$, is used in Figure \ref{fig:MonteCarloILSComparison} as a metric for comparison.}

\begin{table}[]
\centering
\begin{tabular}{rr}
{\ul \textbf{Number of Segments}} &{\ul \textbf{Error $\|\theta-\hat\theta\|_2$}} \\
1                & 7.6920e-5                                    \\
10                  & 5.2175e-6                                    \\
100                & 5.8506e-6                                   
\end{tabular}
\caption{This table contrasts the parameter estimation errors committed by the system identification routine applied to System \ref{sys:lorenz} when the single trajectory is segmented into smaller trajectories. It can be observed that an order of mangnitude improvement was realized when the single trajectory was segmented into $10$ and $100$ trajectories. However, there was no improvement in the error when using $100$ segments over $10$ segments.}
\label{tab:segmented}
\end{table}
\end{experiment}
{\color{black}
\begin{experiment}
This experiment leveraged the constraints of \eqref{eq:constraints} in combination with the LASSO algorithm ($\ell^1$ regularization) for System \ref{sys:one}, where a monomial basis was selected to consist of all monomials of degree $5$ or less. Of the $42$ basis functions leveraged in this experiment, the LASSO algorithm selected only $6$ nonzero weights. These nonzero components corresponded to a collection of basis functions, where the true basis was a proper subset. The parameter values deviated significantly from those of the true parameters, however with the reduced system, the weights can be determined using the pseudo-inverse as in the rest of the manuscript. This method aligns with the SINDy algorithm of \cite{brunton2016discovering}.
\end{experiment}
}

\begin{experiment}\label{exp:six}
This experiment applies the developed technique to identify the Electro-Mechanical Positioning System (EMPS) from the nonlinear system identification benchmarks archive \cite{SCC.Janot.Gautier.ea2019}. The system is a controlled system of the form
\[
    \dot{x}_{1} = x_{2},\quad\dot{x}_{2} = \begin{pmatrix} \tau&-x_2&-\mathrm{sign}\left(x_2\right)&-1\end{pmatrix}\theta
\]
with $\theta\in\mathbb{R}^4$. Due to presence of the controller $\tau$, this is a time-varying system. To reformulate the problem in terms of an autonomous system, time is augmented to the state vector to get a system of the form
\[
    \begin{pmatrix}
        \dot{x}_{1}\\\dot{x}_{2}\\\dot{x}_{3}
    \end{pmatrix}=\begin{pmatrix}
        x_{2}\\0\\1
    \end{pmatrix}+\begin{pmatrix}
        0&0&0&0\\\tau(x_3)&-x_2&-\mathrm{sign}\left(x_2\right)&-1\\0&0&0&0
    \end{pmatrix}\theta = h(x)+\sum_{i=1}^{i=4} \theta_i Y_i(x)
\]
The system identification is then carried out using a slight modification of the procedure in Section \ref{sec:sysIDMatrix} to accommodate the known part $h(x) = \begin{pmatrix}x_{2}&0&1\end{pmatrix}^{T}$:
\[ 
    \hat \theta := (\hat{\bold A}^T \hat{\bold A})^{-1} \hat{\bold A}^T (\bold K(T) - \bold K(0) - \bold B),\quad \bold B = \begin{pmatrix}
    \langle A_{h} K(\cdot, c_{n_{j,1}}), \Gamma_{\gamma_{n_{j,2}}} \rangle_H
    \end{pmatrix}_{j=1}^{j=SN} \in \mathbb{R}^{SN}.
\]
The results of the parameter estimates obtained for this experiment using the validation data provided in \cite{SCC.Janot.Gautier.ea2019} are shown in Table \ref{tab:EMPSexperiments} and Figure \ref{fig:EMPS}.

\begin{table}[]
\centering
\begin{tabular}{rrr}
{\ul \textbf{Signal}} & {\ul \textbf{Benchmark}} & {\ul \textbf{Occupation Kernels}} \\
Position     & 8.025e-3 & 8.038e-3         \\
Velocity     & 4.789e-1 & 4.729e-1         \\
Acceleration & 1.982e+1 & 1.954e+1         \\
Force        & 8.941e+0 & 9.046e+0        
\end{tabular}
\caption{Comparison of percentage errors committed by the developed method and the IDIM-LS method from \cite{SCC.Janot.Gautier.ea2019}. A model that uses the identified parameters is simulated forward in time to obtain the simulated position, velocity, force, and acceleration trajectories using the same controller that was used to collect the validation data. The trajectories are then compared against the validation data, where velocity and acceleration are obtained through filtered central difference differentiation.\label{tab:EMPSexperiments}}
\end{table}
\begin{figure}
    \centering
    \includegraphics[width=0.8\columnwidth]{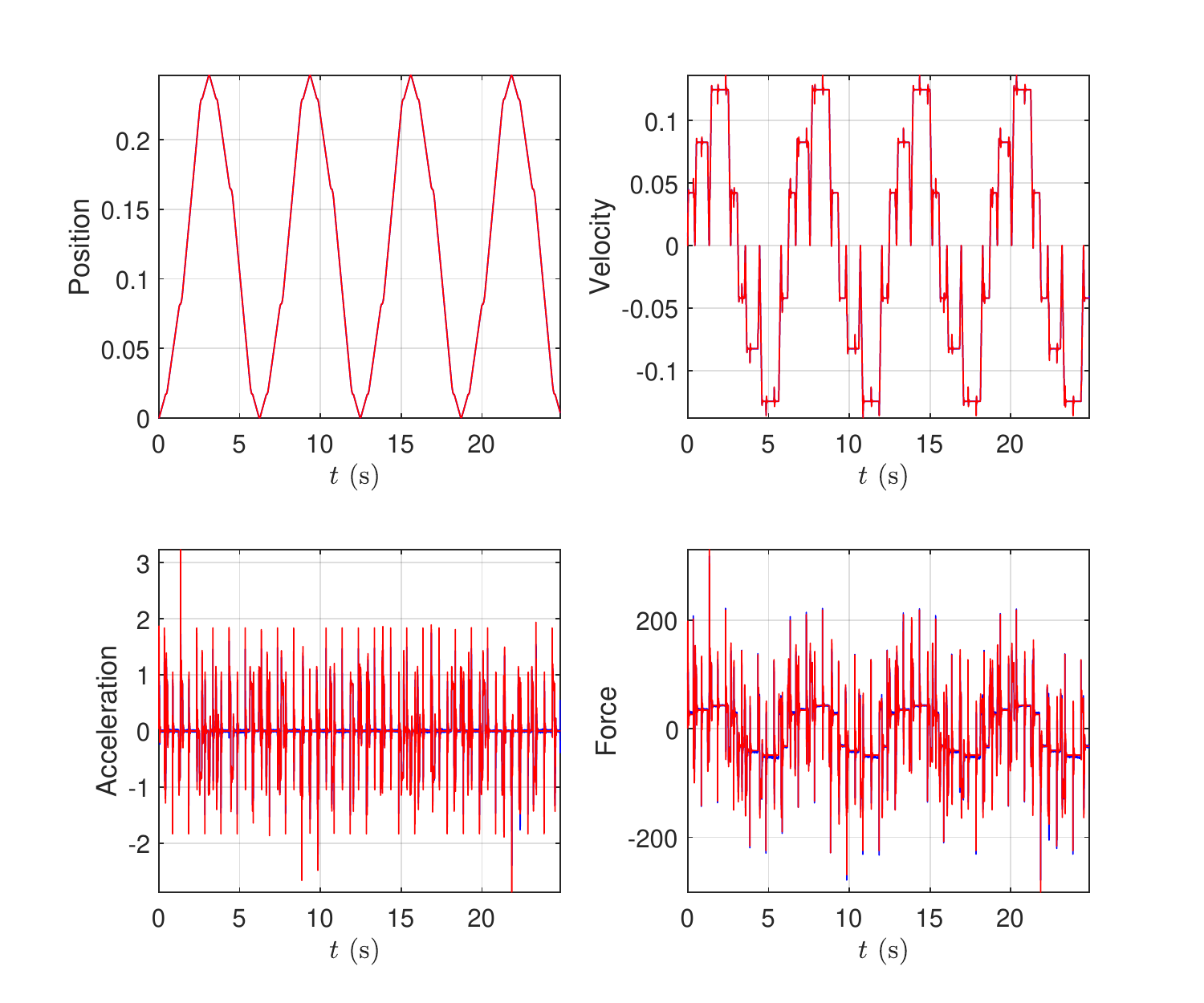}
    \caption{EMPS: Cross-test validation between the simulated and
measured data.}
    \label{fig:EMPS}
\end{figure}

\end{experiment}

\begin{figure}
    \centering
        \includegraphics[scale=0.4]{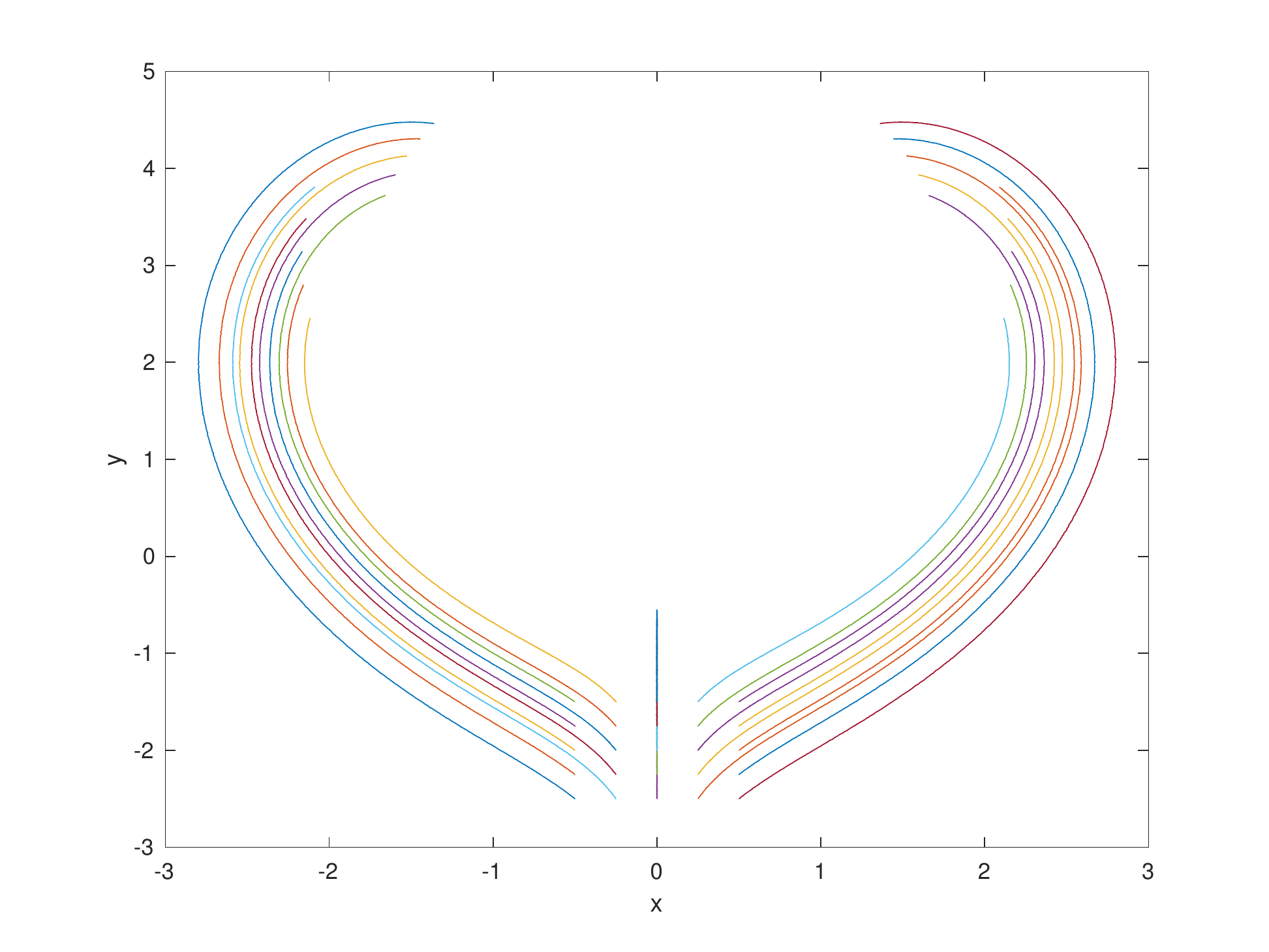}
    \caption{Twenty five trajectories corresponding to System \ref{sys:one}.}
    \label{fig:system1}
\end{figure}

\begin{figure}
    \centering
    \includegraphics[scale=0.4]{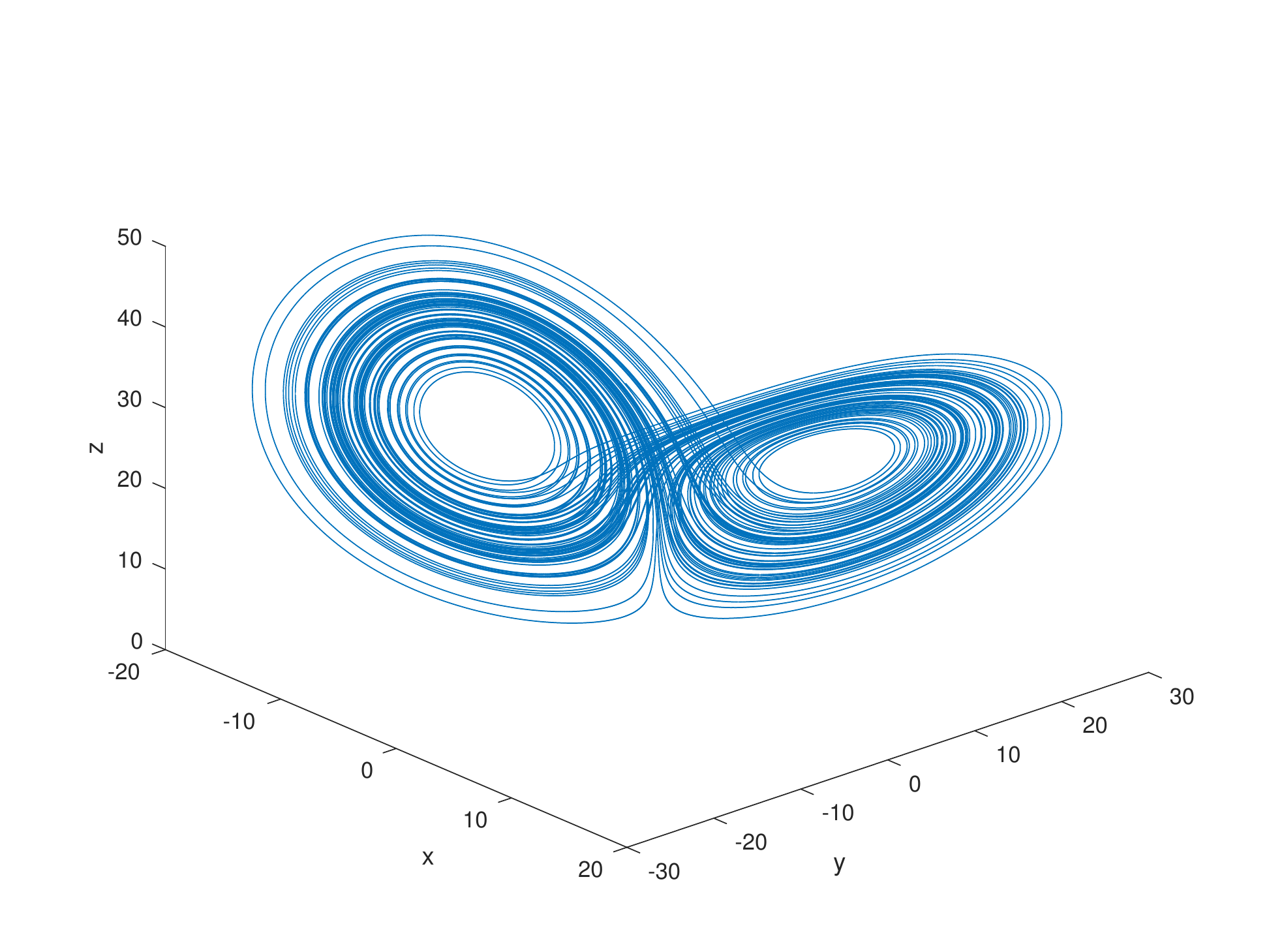}
    \caption{A single trajectory for the three dimensional Lorenz system given in Example 2. }
    \label{fig:system2}
\end{figure}

\begin{figure}
    \centering
    \includegraphics[scale=0.4]{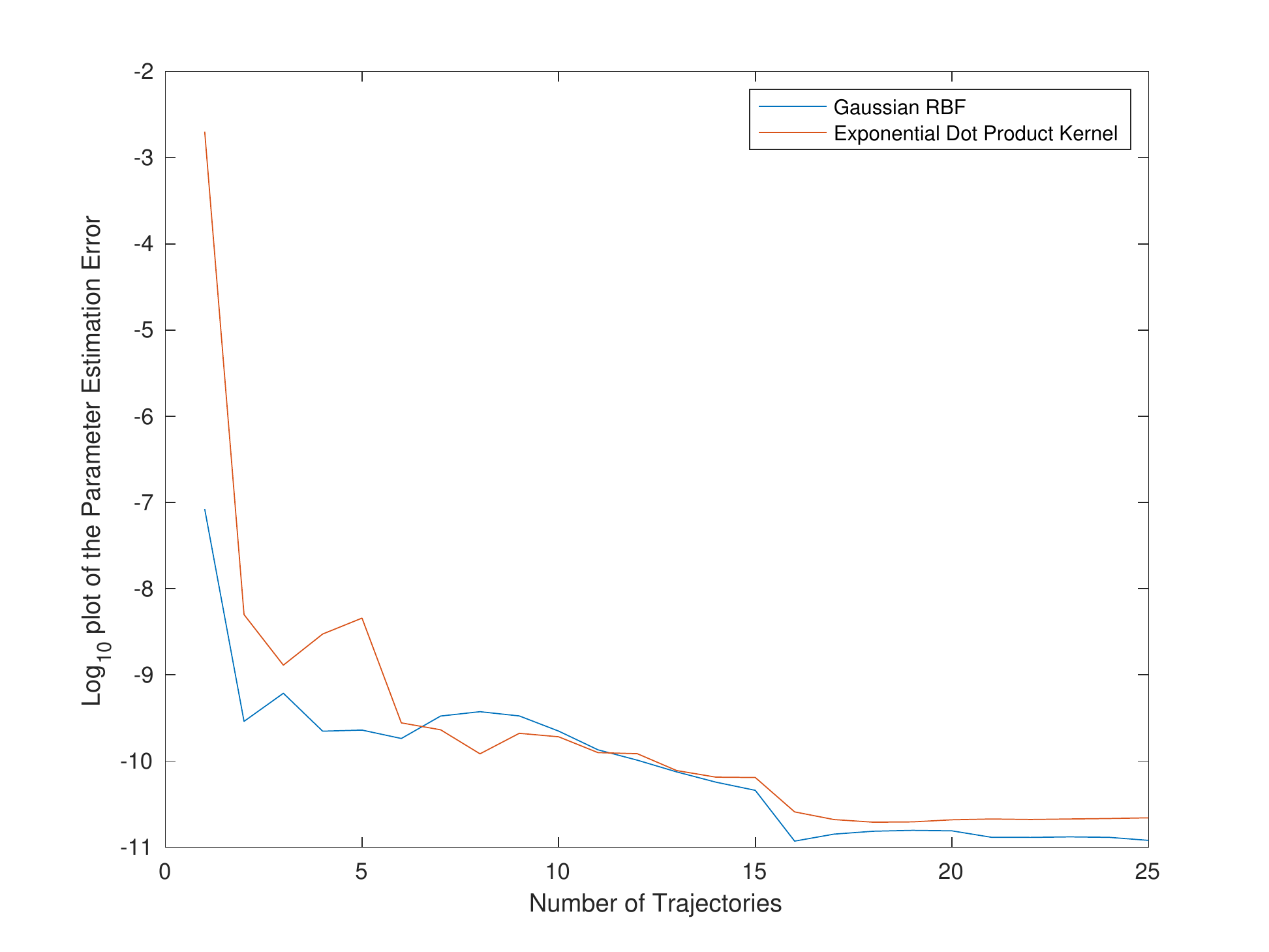}
    \caption{A log-plot of the parameter estimation error, $\|\theta-\hat\theta\|_2$, for System \ref{sys:one} committed by the system identification method in Section \ref{sec:systemid} as determined by the number of trajectories utilized by the method. It may be observed that an accurate estimate of $\theta$ is established using a single trajectory. However, the inclusion of additional data dramatically improves the parameter estimation error. The two graphs represent the results for two different kernel functions, with a slight advantage exhibited by the Gaussian RBF.}
    \label{fig:varynumber}
\end{figure}

\begin{figure}
    \centering
    \includegraphics[scale=0.4]{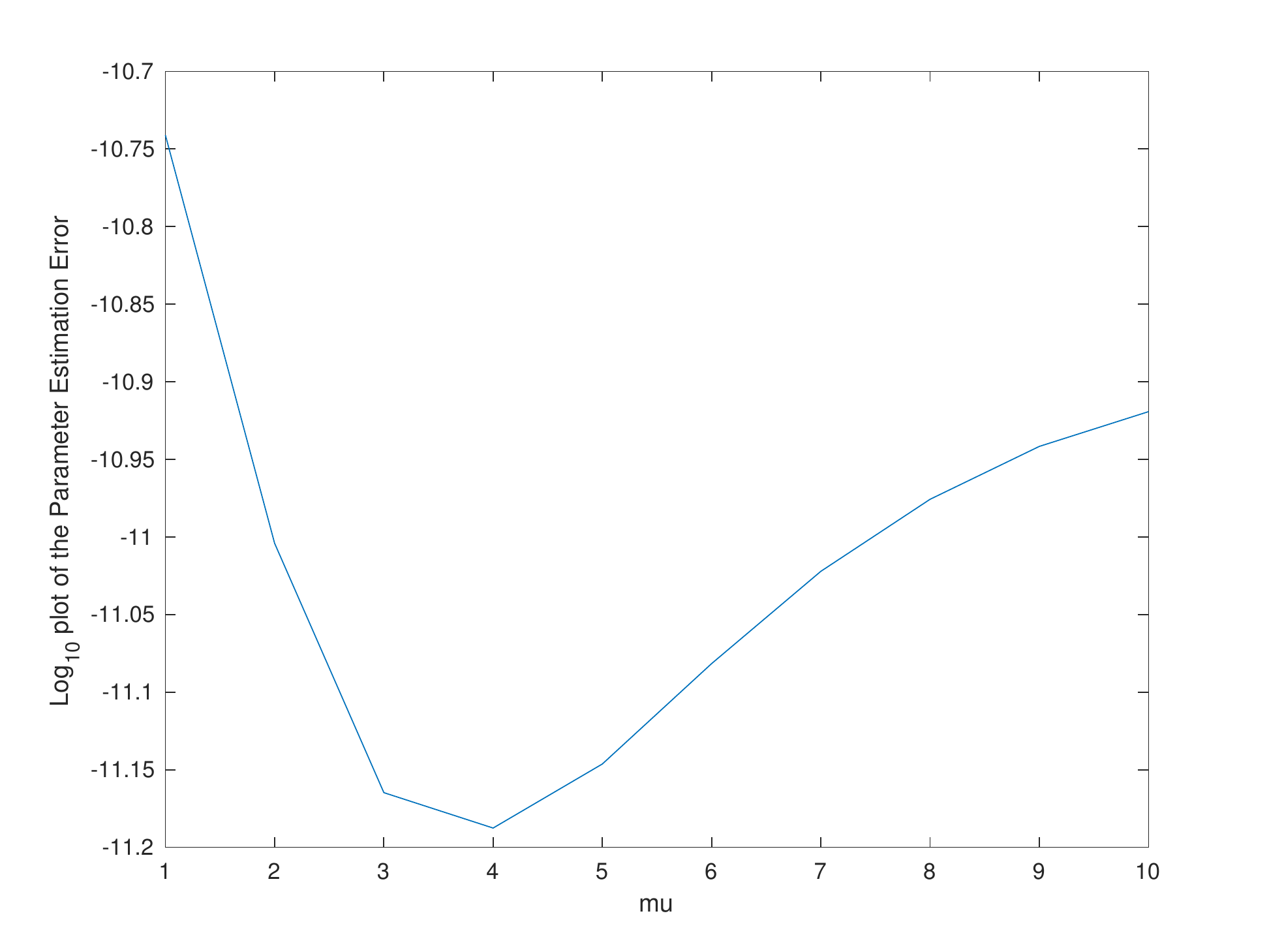}
    \caption{A log-plot of the parameter estimation error versus adjustments in the kernel width $\mu$ for the Gaussian RBF applied to System \ref{sys:one}. Note that the error is small for any selection of $\mu$, but reaches its minimum near $\mu=4$. Additionally note that $4$ is approximately the radius of the workspace. One reason for the loss of accuracy at larger $\mu$ values could be poorer conditioning of the matrix $\bold A$ for large $\mu$.}
    \label{fig:varymu}
\end{figure}

\begin{table}[ht]
\centering
\begin{tabular}{ccrrrr}
{\ul \textbf{Monomial}}                      & {\ul \textbf{Dim}}          & {\ul \textbf{No Noise}}                           & {\ul \textbf{Noise}}                               & {\ul \textbf{Moving Average}}                  & {\ul \textbf{Target}}      \\
$x_1^0x_2^0$                                 & 1                                 & 8.882e-16                               & 8.051e-3                                & -2.093e-3                              & 0                                  \\
{\color{black} \textbf{$x_1^1x_2^0$}} & {\color{black} \textbf{1}} & {\color{black} \textbf{2.000e+0}}   & {\color{black} \textbf{2.000e+0}}   & {\color{black} \textbf{2.000e+0}}   & {\color{black} \textbf{2}}  \\
$x_1^2x_2^0$                                 & 1                                 & 8.882e-16                               & -3.840e-3                               & 1.346e-3                                & 0                                  \\
$x_1^0x_2^1$                                 & 1                                 & -1.554e-15                              & 3.968e-3                                & -1.523e-3                               & 0                                  \\
{\color{black} \textbf{$x_1^1x_2^1$}} & {\color{black} \textbf{1}} & {\color{black} \textbf{-1.000e+0}} & {\color{black} \textbf{-9.988e-1}} & {\color{black} \textbf{-9.994e-1}} & {\color{black} \textbf{-1}} \\
$x_1^0x_2^2$                                 & 1                                 & -6.939e-17                              & -2.173e-4                              & -3.066e-4                              & 0                                  \\
$x_1^0x_2^0$                                 & 2                                 & -8.691e-12                              & -7.179e-3                               & -1.363e-3                               & 0                                  \\
$x_1^1x_2^0$                                 & 2                                 & 0                                                  & 1.471e-3                              & -2.271e-4                          & 0                                  \\
{\color{black} \textbf{$x_1^2x_2^0$}} & {\color{black} \textbf{2}} & {\color{black} \textbf{2.000e+0}}   & {\color{black} \textbf{2.003e+0}}   & {\color{black} \textbf{2.001e+0}}   & {\color{black} \textbf{2}}  \\
{\color{black} \textbf{$x_1^0x_2^1$}} & {\color{black} \textbf{2}} & {\color{black} \textbf{-1.000e+0}}   & {\color{black} \textbf{-1.007e+0}}  & {\color{black} \textbf{-1.001e+0}}   & {\color{black} \textbf{-1}} \\
$x_1^1x_2^1$                                 & 2                                 & -8.327e-17                              & 1.834e-4                               & 9.210e-5                               & 0                                  \\
$x_1^0x_2^2$                                 & 2                                 & 2.652e-12                               & 8.280e-4                               & -2.650e-4                              & 0             \\
\bf \color{black} Max Error & & \bf \color{black} 8.691e-12 &   \bf \color{black} 8.051e-3 &  \bf \color{black} 2.093e-3   &                
\end{tabular}
\caption{\label{tab:experiments}This table presents the results of the nonlinear system identification method applied to the trajectories presented in Figure \ref{fig:system1}. The target parameters are listed in the last column, and the columns ``No Noise,'' ``Noise,'' and ``Moving Average'' show the obtained parameters from their respective experiments. The ``Monomial'' column lists the specific basis function that the parameter of that row is tied to, and ``Dim'' expresses which dimension that particular basis function is contributing to. Note that the bolded rows correspond to the non-zero target values.
The presented results demonstrate that even in the case of unfiltered noise, the nonlinear system identification method of Aim 1 obtains parameter estimates while committing an error of a most $8.051e-3$.}
\end{table}

\begin{figure}
    \centering
    \includegraphics[scale=0.4]{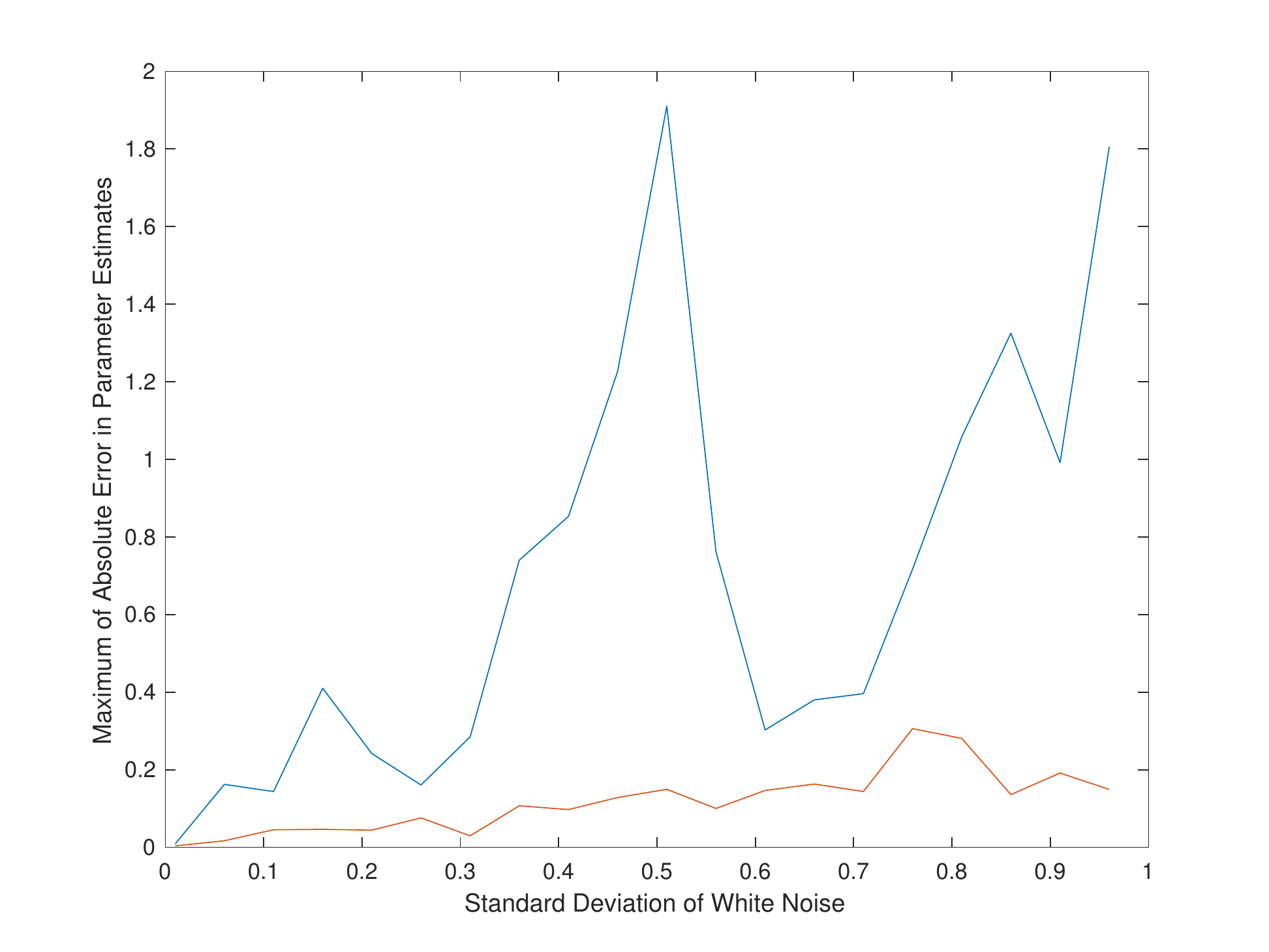}
    \caption{This figure shows the impact of noise on the maximum of the absolute errors of the parameters for System \ref{sys:one}. The blue plot corresponds to parameters obtained using the noisy data, and the red corresponds to those parameters obtained after a $20$ point moving average filter. It can be seen that in combination with a moving average filter, the errors of the parameter estimates remain low as the standard deviation of the noise increases.}
    \label{fig:noise-graph}
\end{figure}

\section{Discussion}\label{sec:discussion}

It may be observed through the numerical experiments performed in Section \ref{sec:numerical} that the system identification method of  Section \ref{sec:systemid} is effective at identifying the parameters for nonlinear systems. In particular, for System \ref{sys:one} parameter estimation errors were as low as $10^{-11}$ and for System \ref{sys:lorenz} the parameter estimation errors were as low as $10^{-5}$. The systems given in Section \ref{sec:numerical} are of two and three dimensions, and the dynamics are nonlinear. The basis functions utilized to represent the unknown dynamics are monomials of degree up to two with appropriate dimensionality. For example, for a three dimensional system the cardinality of the basis of monomials of degree up to two is 30 when accounting for each dimension (i.e. there is a copy of the 10 monomial basis vectors for each dimension). The actual dynamics in each case use only a handful of the basis functions, which results in a sparse representation of the dynamics in the given basis.

The adjustment of several parameters affect the accuracy of the determine parameters, $\theta$. The most obvious impact on the accuracy of the parameters arises through the selection of the kernel function. While theoretically it is established that Liouville operators with polynomial symbols are densely defined over the exponential dot product kernel's native space, the exponential dot product kernel suffers from poor conditioning. This poor conditioning can lead to inaccuracies that appear from numerical uncertainties in the expression of the (left) inverse matrix for $\bold A$ in \eqref{eq:matrix-constraints}. The Gaussian RBF exhibits less conditioning issues than the exponential dot product kernel, especially when a small kernel width is selected. In the case of the Gaussian RBF, the size of the kernel width has an impact on the accuracy of the system identification method as shown in Figure \ref{fig:varymu}. Specifically, occupation kernels corresponding to Gaussian RBFs with smaller kernel widths can distinguish nearby trajectories more effectively than those with larger kernel widths, which leads to better conditioning of $\bold A$ in \eqref{eq:matrix-constraints}. However, it is well known in approximation contexts that larger values of $\mu$ lead to faster convergence \cite{fasshauer2007meshfree}. The minimum error at $\mu=4$ in Figure \ref{fig:varymu} thus strikes a balance between the conditioning of the matrix and the advantages gained from larger $\mu$.

The most significant contribution to errors in the estimation of the parameters is the method of numerical integration performed. The simple example presented in Proposition \ref{prop:quadrature-convergence} gives an estimation of the occupation kernel via a right hand rule method of numerical integration, and while Proposition \ref{prop:quadrature-convergence} provides a proof of concept demonstrating norm convergence to the occupation kernel in question, it is observed in \eqref{eq:sqrth} that this method results in a relatively slow convergence rate. When other methods, such as the trapezoid or Simpson's rule is leveraged for numerical integration, a significant improvement in the performance of the system identification method may be realized as demonstrated in Table \ref{tab:numschemes}. Consequently, the fourth order method of Simpson's rule was utilized for most of the results presented in Section \ref{sec:numerical}.

{\color{black}Given enough segments of the trajectory, the parameters can be estimated using the more straightforward ILS approach in \eqref{eq:ILS}. However, as evidenced by Figure \ref{fig:MonteCarloILSComparison}, the kernel approach developed in this paper can yield better parameter estimates. The improvement is attributed to the ability of the developed framework to extract more information from the same set of trajectories by integrating them against different test functions.}

\begin{figure}
    \centering
    \subfloat[Parameter estimation error resulting from occupation kernel system identification and integral least squares system identification.]{\label{fig:MonteCarloILSComparisonError}\includegraphics[width = 0.48\textwidth]{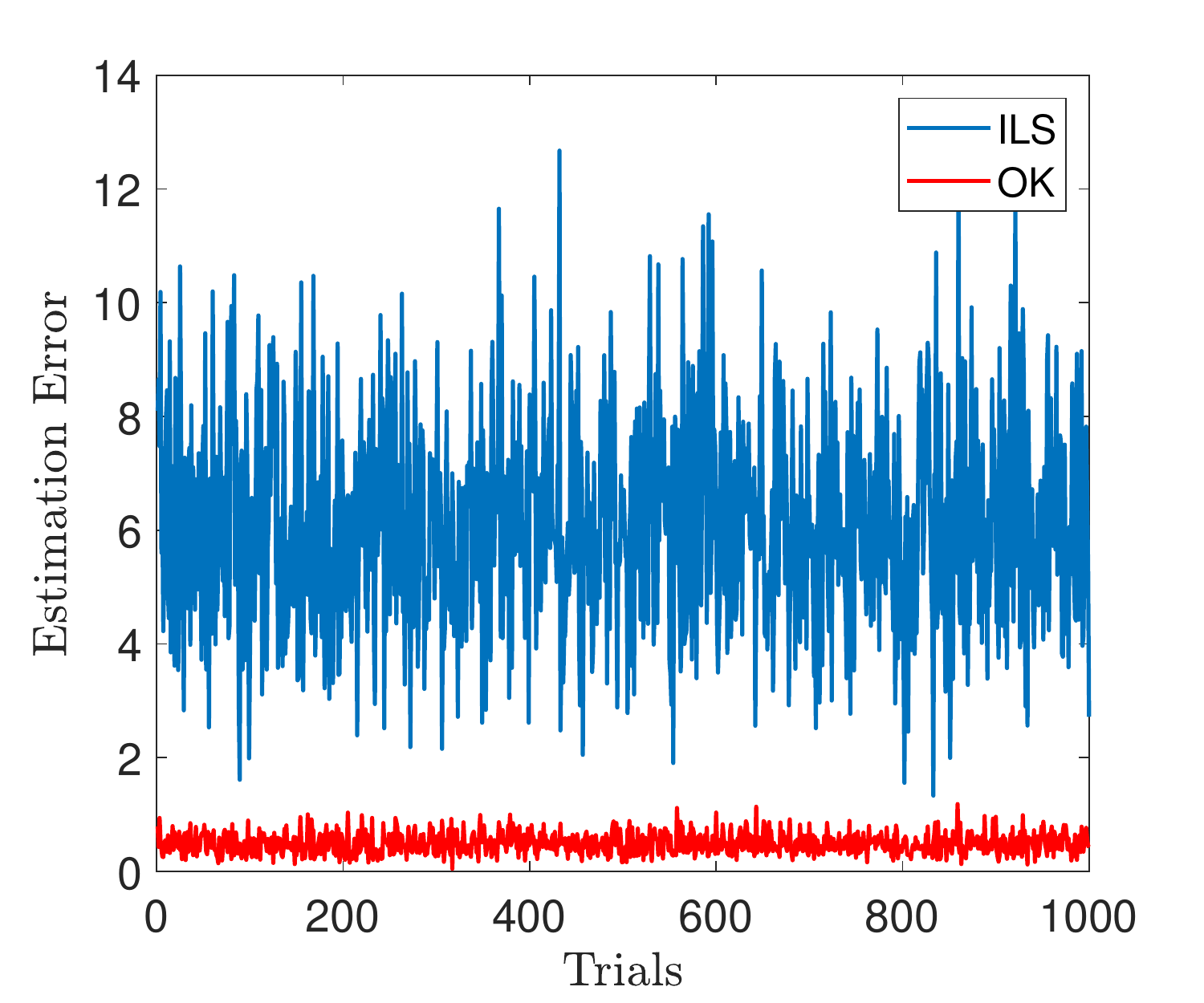}}\hfill
    \subfloat[Condition number of the regression matrix resulting from occupation kernel system identification and integral least squares system identification.]{\label{fig:MonteCarloILSComparisonConditionNumber}\includegraphics[width = 0.48\textwidth]{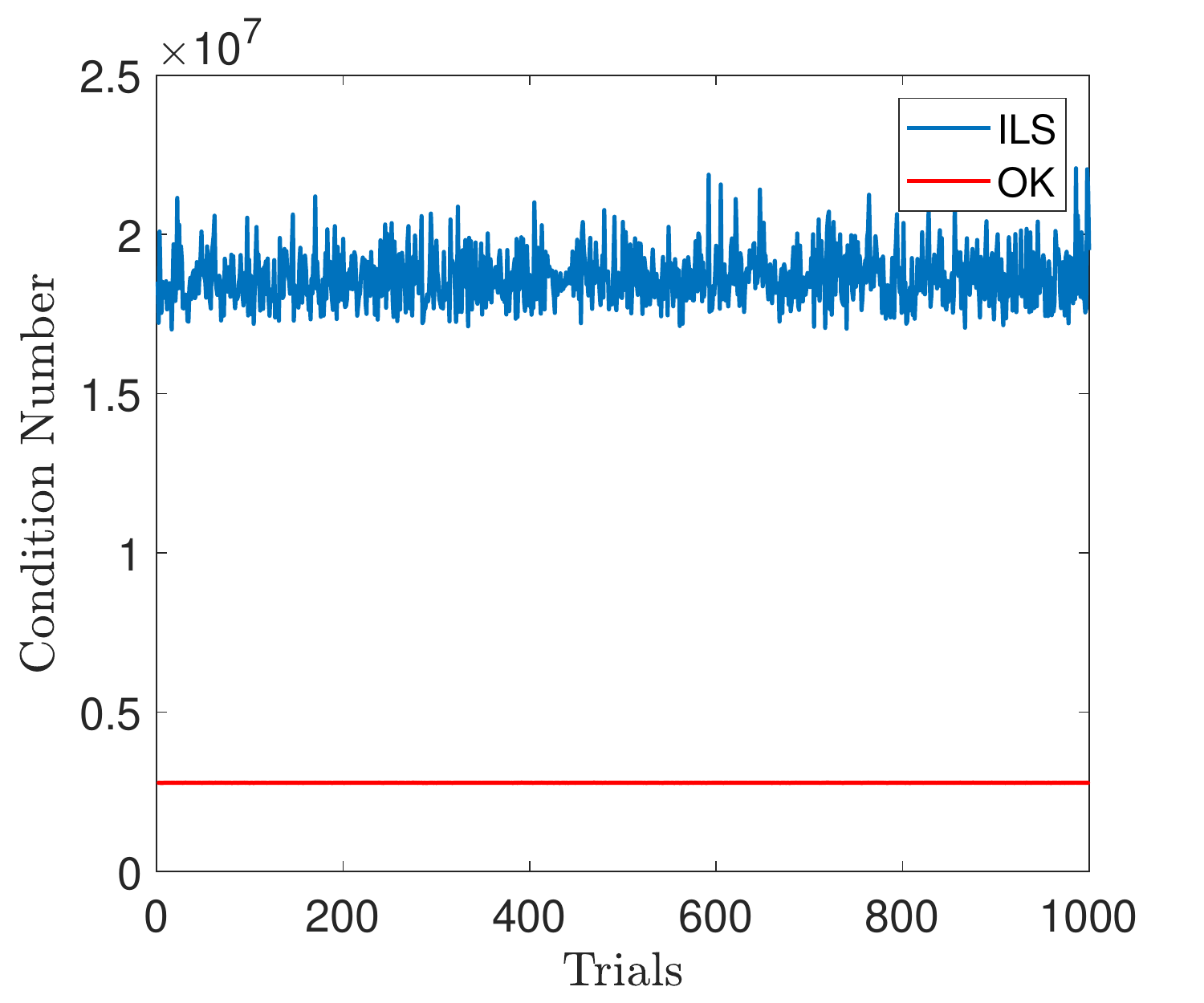}}
    \caption{This figure summarizes the results of a set of 1000 Monte-Carlo trials comparing the ILS formulation in \eqref{eq:ILS} with the developed occupation kernel (OK) system identification method. See Experiment \ref{exp:five} for details.}
    \label{fig:MonteCarloILSComparison}
\end{figure}

Two other factors that contribute to the success of the system identification algorithm of Section \ref{sec:systemid} are the selection of the centers of the kernel functions as well as the number of trajectories. The contribution of the Gaussian RBFs are largest when the centers are distributed over the working space containing the trajectories. That is, if the centers are too far away from the trajectories, the decay of the Gaussian RBFs will lead to near zero row vectors of $\bold A$ in \eqref{eq:matrix-constraints}. For the algorithm in Section \ref{sec:systemid}, each kernel function is evaluated for every trajectory, but this isn't technically necessary and kernel functions that will contribute less or redundant information may be ignored for a specific trajectory.

If only a single trajectory is available from a system, as with the Lorenz example in Section \ref{sec:numerical}, then this trajectory may be segmented to provide more constraints in $\bold A$ of \eqref{eq:matrix-constraints}. It was observed that segmenting the long trajectory of System \ref{sys:lorenz} improved the parameter estimation error as presented in Table \ref{tab:segmented}. This improvement in likely due to the accumulated global error due to numerical time stepping methods in the generation of the trajectory itself. Through segmentation, each segment is treated as a new initial value problem with a smaller time horizon and thus a smaller accumulated global error. Therefore the elements of $\bold A$ in \eqref{eq:matrix-constraints} are closer to the true values of the dynamical system.

Experiment \ref{exp:six} provides a real world test of the system identification method presented in this manuscript. Several features appear in the dynamics of this system that are not present in the other experiments including discontinuities in the acceleration. Table \ref{tab:EMPSexperiments} demonstrates that performance of the present method closely matches that of the benchmark method from \cite{SCC.Janot.Gautier.ea2019}. It should be noted that the poor performance of both methods in estimating the acceleration occurs because of the discontinuities in acceleration, where sharp ``ringing'' phenomena occur at the jumps in acceleration. This occurs as the basis functions used to approximate the dynamics corresponding to acceleration are continuous.

The key difference between existing kernel based approaches and the present approach is that the basis functions are separated from data integration. Whereas typical kernel based approaches leverage the representer theorem to yield an approximation of the dynamics with respect to a linear combination of kernel functions centered at the data points (cf. \cite{SCC.Pillonetto.Dinuzzo.ea2014}). The method presented in this manuscript leverages the form of the occupation kernel to incorporate the trajectory inside a RKHS, and the selection of occupation kernel is largely independent of the selection of basis functions. In the present context, the selection of basis functions is such that the functions should give a densely defined Liouville operator in the selected kernel space to guarantee the soundness of the developed methods. The advantage gained in the use of occupation kernels is that the occupation kernel itself is not as strongly influenced by noisy measurements as the kernel counterparts, since it can be represented as the integral of kernels that have centers along the trajectory.

\section{Conclusion}
{\color{black}In this manuscript a new approach to system identification was developed through the use of Liouville operators and occupation kernels over a RKHS. Liouville operators are densely defined operators whose adjoint contains occupation kernels corresponding to solutions to differential equations within its domain. Hence, a dynamical system may be embedded into a RKHS where methods of numerical analysis, machine learning, and approximation theory affiliated with RKHSs may be brought to bear on problems in dynamical systems theory. Specifically, an inner product on dynamical systems that give rise to densely defined Liouville operators was developed, where the projection of a dynamical system on a linear combination of basis function is realized through the solution of a parameter identification routine that generalized that found in \cite{brunton2016discovering} through a weak formulation using integrals. In this setting, the features of a kernel function become test functions for designing contraints on parameters for system identification.}

The domain of Liouville operators depends nontrivially on the selection of RKHS. It was demonstrated that Liouville operators with polynomial symbols are densely defined over the RKHS corresponding to the exponential dot product kernel function. Moreover, it was demonstrated in the system identification routine that the selection of kernel function may have an effect on the results of parameter estimation.

The system identification method developed in the manuscript was validated on a two dimensional and a three dimensional system through several different experiments designed to evaluate the effects of various integration and RKHS parameters, such as kernel width for the Gaussian RBF, the selection of numerical integration scheme, the selection of kernel, and so on. Through each experiment, accurate estimations of the parameters were achieved. However, it was demonstrated that the largest error source arises through the choice of numerical integration method, where Simpson's rule provided the most accurate results.

\appendix
{\color{black}
\section{Pre-inner Products for Various Kernel Functions}\label{sec:ExplicitInnerproducts}

To give a better presentation of the above framework, this section performs the evaluation of the pre-inner products expressed in \eqref{eq:innerproduct_testfree} for several kernels. This subsection will demonstrate explicit formulas for the exponential dot product kernel, the Gaussian RBF kernel, and the polynomial kernel. In each case this depends on evaluating $\nabla_1 ( \nabla_2 K(x,y) Y_j(y) ) Y_i(x)$ in the Gram matrix. For each of the respective kernels this results in the following
\begin{gather}
    \label{eq:pre-inner-exp-dot}\mu Y_m(y)^T\left( I_n  + \mu  \diag( x_1 y_1, \ldots, x_n y_n ) \right)Y_{m'}(x) \exp\left(\mu x^T y\right)\\
    \label{eq:pre-inner-gauss-rbf}\frac{2}{\mu} Y_m(y)^T \left( I_n   - \frac{2}{\mu}  \diag( (x_1 - y_1)^2, \ldots, (x_n - y_n)^2 ) \right)Y_{m'}(x) \exp\left( -\frac{\| x - y \|_2^2}{\mu}\right)\\
    \label{eq:pre-inner-poly}\frac{d}{\mu}Y_m(y)^T \left( \left(1+\frac{x^Ty}{\mu}\right) I_n + \frac{d-1}{\mu}\diag( x_1 y_1, \ldots, x_n y_n ) \right) Y_{m'}(x) \left(1 + \frac{x^Ty}{\mu}\right)^{d-2}.
\end{gather}
The pre-inner product is completed when each of $x$ and $y$ are replaced with $\gamma(t)$ and $\gamma(\tau)$ respectively and double integration is performed.

These equations generalize to other dot product and RBF kernels in an obvious way. Significantly, if $Y_m$ is pointwise zero in every dimension that $Y_{m'}$ is nonzero (e.g., $Y_m(x) = (x_1^2,0,0)^T$ and $Y_{m'}(x) = (0,x_2x_1,0)^T$), then the pre-inner product is null. Hence, if the bases are expressed as collections of monomials in only one dimension at a time, then the resultant Gram matrix is a block diagonal.

For the right hand side of \eqref{eq:testfree_gram}, it is necessary to compute $\langle f, Y_m \rangle_{\mathcal{F},\gamma} = \langle A_f^* \Gamma_{\gamma}, A_{Y_m}^* \Gamma_{\gamma} \rangle_H$ for each $m$. This is facilitated by noting that $\dot \gamma = f(\gamma)$, and hence $A_{f}^* \Gamma_{\gamma} = K(\cdot,\gamma(T)) - K(\cdot,\gamma(0)).$ For each of the kernels under consideration here, it is thus necessary to compute $\nabla_1 (K(x,\gamma(T)) - K(x,\gamma(0))) Y_m(x)$ as

\begin{gather}
    \left(\gamma(T)^T \exp\left( \mu x^T\gamma(T)\right) - \gamma(0)^T \exp\left( \mu x^T\gamma(0)\right)\right) Y_m(x),\\
    -\frac{2}{\mu}((x-\gamma(T))^T\exp\left(-\frac{\|x-\gamma(T)\|_2^2}{\mu}\right) + (x-\gamma(0))^T\exp\left(-\frac{\|x-\gamma(0)\|_2^2}{\mu}\right)Y_{m}(x)\text,\\
    \text{and }\frac{d}{\mu}\left(\gamma(T)^T\left(1 + \frac{x^T\gamma(T)}{\mu}\right)^{d-1} + \gamma(0)^T\left(1 + \frac{x^T\gamma(0)}{\mu}\right)^{d-1} \right) Y_m(x).
\end{gather}
Replacing $x$ in one of the above with $\gamma(t)$ and integrating over $[0,T]$ gives the $m$-th component of the right hand side of \eqref{eq:testfree_gram}.}

\section{Homotopies and Occupation Kernels}

As part of the purpose of this manuscript is to introduce occupation kernels, this section aims to demonstrate additional continuity results. In particular, a connection between homotopies and occupation kernels is present below.  
\begin{definition}
Let $\gamma_1:[0,T] \rightarrow Y$ and $\gamma_2:[0,T]\rightarrow Y$ be continuous functions. A homotopy between $\gamma_1$ and $\gamma_2$ exists if there is a continuous function $h:[0,T]\times [0,1]\rightarrow Y$ such that $h(t,0)=\gamma_1(t)$ and $h(t,1)=\gamma_2(t)$. Alternatively, a homotopy between the continuous functions $\gamma_1$ and $\gamma_2$ is a family of contiuous functions $h_t:X\rightarrow Y$ such that $h_0(x)=f(x)$, $h_1(x)=g(x)$ and the mapping $(x,t)\rightarrow h_t(x)$ is continuous. 
\end{definition}

\begin{proposition}
Suppose $H$ is a RKHS over a set $X$ consisting of continuous functions and let $\gamma_1(t)$ and $\gamma_2(t)$ be two paths with homotopy $\{\gamma_s(t)\}$. The map 
$s\mapsto \Gamma_{\gamma_s}$
is continuous. 

\end{proposition}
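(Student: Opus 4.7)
The plan is to mirror the strategy used in Lemma \ref{lem:time-continuity}, using the integral representation of occupation kernels from Proposition \ref{prop:integral-rep} together with uniform continuity of $K$ on a suitable compact set. Fix $s_0 \in [0,1]$; the goal is to show $\|\Gamma_{\gamma_s} - \Gamma_{\gamma_{s_0}}\|_H \to 0$ as $s \to s_0$. Expanding the squared norm,
\[
\|\Gamma_{\gamma_s} - \Gamma_{\gamma_{s_0}}\|_H^2 = \langle \Gamma_{\gamma_s},\Gamma_{\gamma_s}\rangle_H + \langle \Gamma_{\gamma_{s_0}},\Gamma_{\gamma_{s_0}}\rangle_H - 2\langle \Gamma_{\gamma_s},\Gamma_{\gamma_{s_0}}\rangle_H,
\]
and each inner product is, by Proposition \ref{prop:integral-rep} and the reproducing property, a double integral of the form
\[
\langle \Gamma_{\gamma_s},\Gamma_{\gamma_{s'}}\rangle_H = \int_0^T\!\!\int_0^T K(\gamma_s(t),\gamma_{s'}(\tau))\,dt\,d\tau.
\]
So it suffices to show that each of these three double integrals depends continuously on $s$ at $s_0$.

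The next step is to extract compactness from the homotopy. Since $h : [0,T] \times [0,1] \to X$ is continuous and $[0,T] \times [0,1]$ is compact, its image $\mathcal{C} := h([0,T] \times [0,1]) \subset X$ is compact. Hence $K$ is uniformly continuous on $\mathcal{C} \times \mathcal{C}$, and $h$ itself is uniformly continuous, so in particular $\sup_{t \in [0,T]} \|\gamma_s(t) - \gamma_{s_0}(t)\| \to 0$ as $s \to s_0$ (where the norm is that of the ambient space or any metric compatible with the topology on $X$). Combining uniform continuity of $K$ with the uniform convergence of $\gamma_s$ to $\gamma_{s_0}$, the integrand $K(\gamma_s(t),\gamma_{s'}(\tau))$ converges to $K(\gamma_{s_0}(t),\gamma_{s_0}(\tau))$ uniformly in $(t,\tau) \in [0,T] \times [0,T]$ as $(s,s') \to (s_0,s_0)$. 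Uniform convergence on a bounded rectangle then passes through the double integral, giving
\[
\langle \Gamma_{\gamma_s},\Gamma_{\gamma_{s'}}\rangle_H \longrightarrow \langle \Gamma_{\gamma_{s_0}},\Gamma_{\gamma_{s_0}}\rangle_H
\]
as $(s,s') \to (s_0,s_0)$. Applying this with $(s',s) = (s,s)$, $(s_0,s_0)$, and $(s,s_0)$ in turn shows each of the three terms in the expansion of $\|\Gamma_{\gamma_s} - \Gamma_{\gamma_{s_0}}\|_H^2$ tends to the same limit $\langle \Gamma_{\gamma_{s_0}},\Gamma_{\gamma_{s_0}}\rangle_H$, so the squared norm vanishes in the limit.

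The only nontrivial step is producing the compact set on which $K$ is uniformly continuous and on which $\gamma_s \to \gamma_{s_0}$ uniformly; this is the main obstacle, but it is resolved cleanly by invoking compactness of the homotopy's domain $[0,T] \times [0,1]$ and continuity of $h$. Everything else is a routine uniform-continuity/uniform-convergence argument, essentially identical in structure to the proof of Lemma \ref{lem:time-continuity}.
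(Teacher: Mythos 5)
Your proposal is correct and follows essentially the same route as the paper's proof: expand $\|\Gamma_{\gamma_s}-\Gamma_{\gamma_{s_0}}\|_H^2$ into three double integrals via the integral representation, use compactness of $[0,T]\times[0,1]$ to get uniform continuity of the homotopy and of $K$ on the (compact) image, and conclude that all three integrals converge to the same limit. The paper phrases this with an explicit $\varepsilon$--$\delta$ bound of $\varepsilon/(2T^2)$ on the integrand rather than a uniform-convergence limit, but the content is identical.
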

\begin{proof}
As $[0,T] \times [0,1]$ is compact, the map $(t,s) \mapsto \gamma_s(t)$ is uniformly continuous. That is for every $\varepsilon > 0$ there exists a $\delta > 0$ such that whenever $\|(t_1,s_1)-(t_s,s_2)\|_2 < \delta$, $\| \gamma_{s_1}(t_1) - \gamma_{s_2}(t_2) \|_2 < \epsilon.$ Moreover, as $K(\cdot,\cdot)$ is continuous, and the image of $\gamma_{s}(t)$ is compact, the map $(s_1,t,s_2,\tau) \mapsto K(\gamma_{s_1}(t),\gamma_{s_2}(\tau))$ is uniformly continuous.

Fix $\varepsilon > 0$ and select $\delta > 0$ such that 
\begin{gather*}
    |K(\gamma_{s_1}(t),\gamma_{s_1}(\tau)) - K(\gamma_{s_2}(t),\gamma_{s_1}(\tau))| < \frac{\varepsilon}{2T^2} \text{ and}\\
    |K(\gamma_{s_2}(t),\gamma_{s_2}(\tau)) - K(\gamma_{s_2}(t),\gamma_{s_1}(\tau))| < \frac{\varepsilon}{2T^2}
\end{gather*}
whenever $|s_1 - s_2| < \delta$. Select $s_1, s_2$ such that $|s_1 - s_2 < \delta$, then
\begin{gather}\label{eq:homo-equality}
\| \Gamma_{s_1} - \Gamma_{s_2} \|_H^2 =  \langle \Gamma_{s_1}, \Gamma_{s_1} \rangle_H + \langle \Gamma_{s_2}, \Gamma_{s_2} \rangle_H - 2 \langle \Gamma_{s_1}, \Gamma_{s_2} \rangle_H  \nonumber \\
= \int_0^T \int_0^T ( K(\gamma_{s_1}(t),\gamma_{s_1}(\tau)) - K(\gamma_{s_2}(t),\gamma_{s_1}(\tau)) \nonumber \\
+ K(\gamma_{s_2}(t),\gamma_{s_2}(\tau)) -  K(\gamma_{s_2}(t),\gamma_{s_1}(\tau)) ) dt d\tau
\end{gather}
Note that \eqref{eq:homo-equality} is positive and bounded by $\varepsilon$ by construction. Hence, the map $s \mapsto \Gamma_{s}$ is continuous.
\end{proof}


\bibliographystyle{siamplain}

\bibliography{cdc2019refs}

\end{document}